\theoremstyle{plain}
\newtheorem{thm}{Theorem}[section]
\newtheorem{lem}[thm]{Lemma}
\newtheorem{rem}[thm]{Remark}
\newtheorem{prop}[thm]{Proposition}
\newtheorem{coro}[thm]{Corollary}
\numberwithin{equation}{section}
\theoremstyle{definition}
\newtheorem{defn}[thm]{Definition}
\theoremstyle{remark}
\newcommand{\bn}{\mathbb{B}_n}
\newcommand{\CC}{\mathbb{C}}
\newcommand{\cn}{\mathbb{C}^n}
\newcommand{\ber}{L_a^2}
\newcommand{\poly}{\mathbb{C}[z_1,\cdots,z_n]}
\newcommand{\la}{\langle}
\newcommand{\ra}{\rangle}
\newcommand{\Z}{\mathcal{Z}}
\newcommand{\LL}{\mathcal{L}}
\newcommand{\ds}{d_S}
\newcommand{\Bd}{\mathbb{B}_{\delta}}
\newcommand{\pij}{p_{ij}}
\newcommand{\fij}{f_{ij}}
\newcommand{\gij}{g_{ij}}
\newcommand{\dist}{\mathrm{dist}}
\newcommand{\PP}{\mathcal{P}}
\newcommand{\QQ}{\mathcal{Q}}
\newcommand{\E}{\tilde{E}}
\newcommand{\F}{\tilde{F}}
\newcommand{\PPP}{\hat{P}}
\newcommand{\W}{\mathcal{W}}
\newcommand{\T}{\hat{T}}
\begin{document}
	
	\title[]{Essential Normality - a unified Approach in Terms of Local Decompositions}
	\author[]{Yi Wang}
	\address{Department of Mathematics, State University of New York at Buffalo, Buffalo, NY 14260, USA}
	\email{yiwangfdu@gmail.com}
	%\author[]{}
	%\address{}
	%\email{}	
	
	%\subjclass[2000]{}
	\keywords{essential normality, Arveson-Douglas Conjecture, Bergman space, asymptotic stable division}
	
		\maketitle
	
	\centerline{\it Dedicated to the memory of Ronald G. Douglas}

	\begin{abstract}
		In this paper, we define the asymptotic stable division property for submodules of $\ber(\bn)$. We show that under a mild condition, a submodule with the asymptotic stable division property is $p$-essentially normal for all $p>n$. A new technique is developed to show that certain submodules have the asymptotic stable division property. This leads to a unified proof of most known results on essential normality of submodules as well as new results. In particular, we show that an ideal defines a $p$-essentially normal submodule of $\ber(\bn)$, $\forall p>n$, if its associated primary ideals are powers of prime ideals whose zero loci satisfy standard regularity conditions near the sphere.
	\end{abstract}

	\section{Introduction}
	\setcounter{equation}{0}
	Let $\bn$ be the open unit ball in $\cn$. The Bergman space $\ber(\bn)$ consists of all holomorphic functions $f$ on $\bn$ such that 
	\[
	\|f\|^2=\int_{\bn}|f|^2dv<\infty.
	\]
	Here $v$ denotes the normalized Lebesgue measure, i.e., $v(\bn)=1$. For $i=1,\cdots,n$, the coordinate functions $z_i$ acts on $\ber(\bn)$ by multiplication:
	\[
	M_{z_i}f=z_if,\quad f\in\ber(\bn).
	\]
	The $n$-tuple of operators $(M_{z_1},\cdots,M_{z_n})$ are commuting and thus induces a Hilbert $\poly$-module structure on $\ber(\bn)$:
	\[
	\poly\times\ber(\bn)\to\ber(\bn),\quad (p,f)\mapsto p(M_{z_1},\cdots,M_{z_n})f=pf.
	\]
	For any $i, j=1,\cdots,n$, it is well known that the commutator $[M_{z_i}, M_{z_j}^*]$ belongs to the Schatten class $\mathcal{C}_p$, $\forall p>n$.
	
	A closed subspace $\PP\subset\ber(\bn)$ that is invariant under $M_{z_i}$, $i=1,\cdots,n$, is called a (Hilbert) \emph{submodule} of $\ber(\bn)$. The commuting tuple $(R_1,\cdots,R_n)$, where $R_i=M_{z_i}|_{\PP}$, defines the module action on $\PP$. Its orthogonal complement $\QQ:=\PP^\perp$ is called a \emph{quotient module} of $\ber(\bn)$. The module action on $\QQ$ is defined by the tuple $(S_1,\cdots,S_n)$, where $S_i=QM_{z_i}|_{\QQ}$. Here $Q$ denotes the projection operator onto $\QQ$. For $p\geq1$, we say $\PP$ ($\QQ$) is $p$-essentially normal if $[R_i, R_j^*]\in\mathcal{C}_p$ ($[S_i, S_j^*]\in\mathcal{C}_p$).
	
	For an ideal $I$ in $\poly$, let $\PP_I$ denote its closure in $\ber(\bn)$. Then it is easy to see that $\PP_I$ is a submodule of $\ber(\bn)$. Therefore $\QQ_I:=\PP_I^\perp$ is a quotient module.
	
	\medskip

	\noindent\textbf{Arveson-Douglas Conjecture:} Suppose $I$ is a homogeneous ideal in $\poly$. Then the quotient module $\QQ_I$ is $p$-essentially normal for all $p>\dim_{\CC}Z(I)$.
	
	\begin{rem}
		The Arveson-Douglas Conjecture was originally stated on the Drury-Arveson space $H_n^2$. Later it was shown that, for a homogeneous ideal $I$ and any $p>n$, the $p$-essential normalities for the closures of $I$ in the Drury-Arveson space, the Bergman space and the Hardy space, are equivalent. Closures of non-homogeneous ideals and non-polynomial generated submodules are also considered . Submodules on other domains were also considered.
		
		In this paper, we consider submodules in the Bergman module. We will consider $p$-essential normality for $p>n$. For $p>n$, the $p$-essential normality of a submodule $\PP$ is equivalent to the $p$-essential normality of its quotient module $\QQ$.
	\end{rem}
	
	The Arveson-Douglas Conjecture arises from Arveson's study of row contractions in multivariable operator theory \cite{Arv98}-\cite{Arv07}. Later, Douglas \cite{Dou06} showed that, given an essentially normal quotient module $\QQ_I$, the short exact sequence
	\[
	0\to\mathcal{K}\to C^*(\{S_i\}, I)+\mathcal{K}\to C(X_I)\to0
	\]
	defines an element $[\QQ_I]$ in the odd K-homology group $K_1(X_I)$ of a topological space $X_I$. One can show that $\overline{\bn\cap Z(I)}\cap\partial\bn\subset X_I\subset\partial\bn\cap Z(I)$. In the case $I$ is homogeneous, $X_I=Z(I)\cap\partial\bn$. The element $[\QQ_I]$ carries geometric information of $Z(I)$. This gives a new kind of index theorem. Moreover, a positive result of the Arveson-Douglas Conjecture will lead to an analytic Grothendieck-Riemann-Roch theorem allowing singularities \cite{DTY}.
	
	The existing results of the Arveson-Douglas Conjecture can be roughly categorized into three types. The first type contains results concerning varieties of dimension $1$, or codimension $1$. In \cite{GWK}, Kuo and Wang proved the cases of homogeneous ideals $I$ when $n\leq3$, or $\dim Z(I)\leq1$, or $I$ is principal. Douglas and Wang \cite{DWK} showed that for a principal ideal $I$, not necessarily homogeneous, $\PP_I\subset\ber(\bn)$ is $p$-essentially normal for all $p>n$. Fang and Xia \cite{FX13}\cite{FX18} extended the results to polynomial-generated principal submodules of the Hardy space $H^2(\bn)$, and, under additional assumptions, the Drury-Arveson space $H_n^2$. Douglas, Guo and the author \cite{DGW} showed that a principal submodule of $\ber(\Omega)$ generated by a function $h\in\mathrm{Hol}(\overline{\Omega})$ is $p$-essentially normal for all $p>n$. Here $\Omega\subset\cn$ is any bounded strongly pseudoconvex domain with smooth boundary. The result was extended to the Hardy spaces $H^2(\Omega)$ by the author and Xia \cite{WX2}.
	
	The second type of results concern a geometric version of the Arveson-Douglas Conjecture. The results involve varieties with geometric conditions such as smoothness and transversallity on $\partial\bn$. Engli\v{s} and Eschmeier \cite{EE} showed that, if a variety $V$ is homogeneous and its only possible singular point is the origin, then the radical ideal $I$ of all polynomials vanishing on $V$ defines a $p$-essentially normal quotient module for any $p>\dim V$. Douglas, Tang and Yu \cite{DTY} showed that, if $I$ is radical and $Z(I)$ is a complete intersection space that is smooth on $\partial\bn$, intersects transversely with $\partial\bn$, then $\QQ_I$ is essentially normal. Douglas and the author \cite{DWY1} showed that, if $I$ is a radical ideal and $Z(I)$ is smooth on $\partial\bn$ and intersects transversely with $\partial\bn$, then $\QQ_I$ is $p$-essentially normal for all $p>2\dim Z(I)$. The result was then refined to all $p>\dim Z(I)$ by the author and Xia \cite{WX1}.
	
	The third type of results involve conditions that ensure decompositions of the submodules, or quotient modules into nice parts \cite{KS}\cite{Sha}\cite{DWY2}. In particular, in \cite{Sha}, Shalit considered the stable division property of a submodule in $H^2_n$ and showed that a graded submodule with the stable division property is $p$-essentially normal for all $p>n$.
	
	The aim of this paper is to provide a unified proof of most of the known Bergman-space results above. We define the asymptotic stable division property (Definition \ref{defn: ASD}) and show that the asymptotic stable division property leads to essential normality. Our first main result is the following.
	
	\begin{thm}[Theorem \ref{thm: ASD to EN}]
		Suppose $\PP$ is a submodule of $\ber(\bn)$ with the asymptotic stable division property. If the generating functions $h_i$ are all defined in a neighborhood of $\overline{\bn}$, and the controlling constants $C_i$, $N_i$, determined by $h_i$ (as in Theorem \ref{thm: key inequality}), are uniformly bounded for all $i\in\Lambda$, then the submodule $\PP$ is $p$-essentially normal for all $p>n$. In particular, if the generating functions $h_i$ are polynomials of uniformly bounded degrees, then $\PP$ is $p$-essentially normal for all $p>n$.
	\end{thm}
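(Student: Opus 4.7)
The goal is to establish $[R_i,R_j^*]\in\mathcal{C}_p$ for all $i,j$ and all $p>n$. For submodules of the Bergman module this is equivalent to the commutator $[P,M_{z_j}]$ lying in $\mathcal{C}_{2p}$, where $P$ is the orthogonal projection of $\ber(\bn)$ onto $\PP$. A standard route to such an estimate is to bound $\|(I-P)M_{z_j}k_\lambda\|$ pointwise in $\lambda\in\bn$, where $k_\lambda$ is the normalized Bergman reproducing kernel, and integrate against the invariant measure $(1-|\lambda|^2)^{-n-1}\,dv(\lambda)$; the critical threshold $p>n$ then arises from the volume growth of this measure.

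The plan is therefore to use the asymptotic stable division property to decompose $M_{z_j}k_\lambda$ locally, for $\lambda$ near $\partial\bn$, into pieces $h_i g_i$ divisible by one of the generators $h_i$, together with an asymptotically negligible remainder. For each such piece, Theorem \ref{thm: key inequality} supplies a bound on the distance from $\PP$ with constants $C_i$ and $N_i$ depending only on the generator $h_i$. The assumption that every $h_i$ extends to a neighborhood of $\overline{\bn}$ ensures that the inequality applies uniformly up to the sphere, and the uniform boundedness of the pairs $(C_i,N_i)$ across $i\in\Lambda$ lets me replace them by single universal constants. The corollary for polynomials of uniformly bounded degree follows automatically, since both $C_i$ and $N_i$ depend only on coefficient size and degree.

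The third step is to glue the pointwise local bound into a global Schatten statement. A standard covering of a boundary collar in the Bergman metric, combined with the uniform local defect estimate, should give $\|(I-P)M_{z_j}k_\lambda\|\lesssim (1-|\lambda|^2)^{\alpha}$ with $\alpha$ dictated by the key inequality; upon integration against $(1-|\lambda|^2)^{-n-1}\,dv$ this yields $[P,M_{z_j}]\in\mathcal{C}_{2p}$ for all $p>n$. I expect the main obstacle to be exactly this last calibration: the ASD decomposition is asymptotic rather than exact, so one must quantify the rate at which the error becomes negligible and verify that this rate is sharp enough to land on the critical exponent $p>n$ rather than a weaker threshold. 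The uniformity hypothesis on $C_i$ and $N_i$ is precisely what prevents the per-generator constants from drifting as $\lambda$ moves along the sphere, and thereby allows the covering argument to close at the optimal exponent.
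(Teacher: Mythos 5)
Your proposal takes a genuinely different route from the paper — a reproducing-kernel test ("estimate $\|(I-P)M_{z_j}k_\lambda\|$ and integrate against the invariant measure") rather than the paper's reduction, via Arveson's Lemma, to showing $QM_{z_k}^*P\in\mathcal{C}_p$ for $p>2n$ followed by an approximation of $M_{z_k}^*$ by a weighted model operator $M_{z_k}^{(l)*}$ and an application of the $L^2_{a,1}\to L^2$ Schur estimates and Lemma \ref{lem: sob emb thm for weighted space}. Unfortunately the route as sketched contains several genuine gaps, not just omitted details.

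First, the object you propose to decompose, $M_{z_j}k_\lambda$, is not an element of $\PP$, so the asymptotic stable division property simply does not apply to it. Definition \ref{defn: ASD} provides a decomposition $Tf=\sum h_ig_i$ only for $f\in\PP$. The paper's proof respects this: it works with $f\in\PP$, decomposes $Tf$, and then compares $M_{z_k}^{(l)*}Tf$ with a constructed element $S_kf=\sum h_iG_i$ of $\PP$, where each $G_i$ is an integral transform of $g_i$. There is no step in which a reproducing kernel is decomposed.

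Second, Theorem \ref{thm: key inequality} does not "supply a bound on the distance from $\PP$." It is a pointwise inequality controlling $|h(z)f(w)|$ by a local integral of $|hf|$. In the paper it is used to control each $|h_iG_i(z)|$ and each $|h_i(z)g_i(w)|$ by integral kernels, so that the convergence of $\sum h_iG_i$ in $\PP$ and the size of the error $M_{z_k}^{(l)*}Tf-S_kf$ can be read off via the Schur-type estimates of Lemma \ref{lem: norm estimates for integral kernels}. Your description conflates this with a distance estimate, which it is not.

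Third, the proposed gluing step — integrating a pointwise bound on $\|(I-P)M_{z_j}k_\lambda\|$ against $(1-|\lambda|^2)^{-n-1}dv(\lambda)$ to conclude $\mathcal{C}_{2p}$ membership for all $p>n$ — is not justified by any standard theorem; a pointwise bound on $\|Ak_\lambda\|$ is not in general sufficient for Schatten class membership of $A$ at a sharp exponent. The mechanism the paper actually uses is Lemma \ref{lem: sob emb thm for weighted space}: one shows $\|QM_{z_k}^{(l)*}Tf\|^2\lesssim\|f\|_{L^2_{a,1}}^2$ for $f\in\PP$, which is precisely where conditions (2) and (3) of Definition \ref{defn: ASD} (the $L^2$ and weighted $L^2$ control of $\sum|h_ig_i|$) enter. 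Your sketch never uses these two quantitative conditions, and without them there is no way to land on $p>n$. Finally, you also need the invertibility of the operator $T$ to convert a Schatten bound for $QM_{z_k}^{(l)*}TP$ into one for $QM_{z_k}^{(l)*}P$; this step is absent from your outline.
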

The proof of Theorem \ref{thm: ASD to EN} involves an inequality of a new type (Theorem \ref{thm: key inequality}) that first appeared in \cite{DGW}. Since principal submodules and graded submodules with the stable division property have the asymptotic stable division property trivially, Theorem \ref{thm: ASD to EN} provides a unified proof for the two types of results immediately.

	We will also introduce technical hypotheses (Hypothesis 1) that lead to the asymptotic stable division property (Theorem \ref{thm: ASD}). Then we will prove our second main result.
	\begin{thm}[Theorem \ref{thm: Smooth implies ASD}]\label{thm: non-radical}
		Suppose $I$ is an ideal in $\poly$ with primary decomposition $I=\cap_{j=1}^k I_j^{m_j}$, where $I_j$ are prime ideals. Assume the following.
		\begin{itemize}
			\item[(1)] For each $j=1,\cdots,k$, $Z(I_j)$ has no singular points on $\partial\bn$ and intersects $\partial\bn$ transversely.
			\item[(2)] Any pair of the varieties $\{Z(I_j)\}$ does not intersect on $\partial\bn$. 
		\end{itemize}
		Then the submodule $\PP_I$ has the asymptotic stable division property with generating elements $\{h_i\}$ being polynomials of uniformly bounded degrees. As a consequence, $\PP_I$ is $p$-essentially normal for all $p>n$.
	\end{thm}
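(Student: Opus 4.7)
The plan is to verify the technical Hypothesis 1 for the ideal $I$, after which Theorem \ref{thm: ASD} produces the asymptotic stable division property and Theorem \ref{thm: ASD to EN} then delivers $p$-essential normality for all $p>n$. The first reduction exploits condition (2): the compact sets $Z(I_j)\cap\partial\bn$ are pairwise disjoint, so one can choose pairwise disjoint open neighborhoods $U_j\subset\cn$ of $Z(I_j)\cap\partial\bn$ together with an open set $U_0\supset\partial\bn\setminus\bigcup_j U_j$ on which $I$ has no common zeros. For $\ell\neq j$ the factors $I_\ell^{m_\ell}$ are invertible in $U_j$ (their zero sets avoid $U_j$), so locally in $U_j$ the ideal $I$ coincides with $I_j^{m_j}$, and in $U_0$ it is locally trivial. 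A smooth partition of unity subordinate to $\{U_j\}_{j=0}^k$ reduces the construction of generators for the asymptotic stable division to an independent local problem near each $Z(I_j)\cap\partial\bn$.

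The second step analyzes a single primary component $I_j^{m_j}$ near a point $p\in Z(I_j)\cap\partial\bn$. By (1), $Z(I_j)$ is smooth at $p$ of some pure codimension $r$ and meets $\partial\bn$ transversely. Fix polynomial generators $g_1,\dots,g_s$ of the prime ideal $I_j$; then $I_j^{m_j}$ is generated by the finite family $\{g^\alpha:|\alpha|=m_j\}$ of polynomials of degree at most $m_j\cdot\max_\ell\deg g_\ell$, which is the uniformly bounded degree needed by Theorem \ref{thm: ASD to EN}. Taking the union of these families over $j=1,\dots,k$ yields the proposed generating set $\{h_i\}_{i\in\Lambda}$ for $\PP_I$. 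The remaining task is to show, locally on $U_j$, that any $f\in I$ admits a quantitatively controlled decomposition $f=\sum_{|\alpha|=m_j}c_\alpha g^\alpha$, with the coefficient functions $c_\alpha$ bounded in the relevant Bergman norms by a constant times $\|f\|$. After a local holomorphic straightening of $Z(I_j)$ near $p$, the first $r$ of the $g_\ell$ become coordinate functions, the rest are combinations of these, and the Taylor expansion of $f$ in the normal directions consists entirely of monomials of total degree $\geq m_j$; distributing these monomials among the $g^\alpha$ gives the division, with uniform constants coming from the compactness of $Z(I_j)\cap\partial\bn$ and a {\L}ojasiewicz-type estimate that the defining functions $g_\ell$ control the distance to $Z(I_j)$ on $U_j$.

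The third step patches these local divisions into a global asymptotic division: applying the partition of unity $\{\chi_j\}$ to $f$ writes $f=\sum_j\chi_j f+\chi_0 f$, and the local decomposition on each $U_j$ supplies the generators. The "asymptotic" defect comes from the derivatives of $\chi_j$, which are supported strictly inside $\bn$ (away from $\partial\bn$), so the error term vanishes to infinite order at the boundary in the sense required by Definition \ref{defn: ASD}. This verifies Hypothesis 1 with uniformly bounded polynomial generators, and the theorem follows.

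The main obstacle will be the local division estimate in the primary (i.e., $m_j>1$) case. In the radical case treated in \cite{DWY1}\cite{WX1}, one has a single defining function per normal direction and straightening reduces the problem to a standard one; here one must track \emph{all} mixed monomials of total degree exactly $m_j$ in the local defining functions and obtain uniform bounds for the division across all $f\in I$ and across all boundary points, including the interaction between the algebraic redundancy of having $s>r$ polynomial generators of $I_j$ and the geometric straightening that only uses $r$ of them. A careful Koszul-type argument, together with the transversality assumption that ensures the straightening map extends smoothly up to and past $\partial\bn$, should yield the required uniform constants.
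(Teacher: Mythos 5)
Your high-level strategy --- verify Hypothesis 1, localize near each $Z(I_j)\cap\partial\bn$ using the pairwise disjointness from assumption (2), and treat one primary component at a time --- is indeed the paper's strategy. However, there are three substantive gaps. First, the covers $\{E_i\},\{F_i\}$ in Hypothesis 1 cannot be a fixed finite partition of unity: they must be constructed depending on an arbitrarily large parameter $R$, with the bounded-overlap condition $E_{i3}\subset F_i$ holding for Bergman balls of radius $3R$. A fixed finite family $\{U_j\}$ cannot satisfy this for all $R$, and the entire asymptotic mechanism (shrinking $\epsilon_R$ and $r/R$ as $R\to\infty$) that makes Theorem \ref{thm: ASD} go through disappears. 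The paper resolves this by constructing a fine radial-spherical covering (Section \ref{section: a covering lemma}), built separately along $\Z$ and away from $\Z$, that adapts to $R$ and satisfies the Bergman-ball conditions uniformly.

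Second, your local division step is underspecified in exactly the place where the difficulty lives. After holomorphic straightening, ``distributing monomials of degree $\geq m_j$ among the $g^\alpha$'' with \emph{uniform} Bergman-norm bounds over $f\in I$ and over all boundary points is precisely the hard estimate. The paper handles it not by Taylor expansion but by decomposing the reproducing kernel $K_w(z)$ explicitly (Lemma \ref{lem: xy decomposition}, Lemma \ref{lem: general decomposition lemma}), with the coefficient functions realized as weighted Bergman projections $f_\alpha(z)=c_R^{-1}\int_{F_1}f(w)G(z,w)\overline{w''^\alpha}\,dv(w)$ and the error term $I(z,w)$ controlled using that $\partial^\alpha f$ vanishes on the variety for $|\alpha|<m$. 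Moreover, the generators used are not arbitrary products $g^\alpha$ of chosen polynomial generators of $I_j$, but the canonical defining functions $P_{\pi_u|_A}^{m_j}(z,w_i)$ of Chirka; Theorem \ref{thm: global CDF are polynomials} is what guarantees these are polynomials in $I_j^{m_j}$ of uniformly bounded degree, independent of the boundary point and the local projection. Your $g^\alpha$ would serve as generators of the ideal, but there is no visible route to the uniform division estimate with them.

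Third, the claim that ``the asymptotic defect comes from the derivatives of $\chi_j$, which are supported strictly inside $\bn$, so the error vanishes to infinite order at the boundary'' is not how the asymptotic smallness arises, and it overlooks the holomorphicity constraint. The functions $g_i$ in Definition \ref{defn: ASD} must be holomorphic, so $\chi_j f$ has to be projected back into the Bergman space, and the resulting error is not supported away from $\partial\bn$; the derivatives of the cut-offs near the transition region between neighborhoods $U_j$ live all the way out to $\partial\bn$. In Theorem \ref{thm: ASD} the corresponding error is controlled quantitatively by the Lipschitz constant $1/R$ of the cut-offs $\varphi_i$ together with the $\epsilon_{r,l}$ factor from Lemma \ref{lem: norm estimates for integral kernels}, both of which tend to zero as $R\to\infty$; this is what ``asymptotic'' means here, and it again requires the $R$-dependent fine cover.
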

	
	In \cite[5.2]{DTY}, the authors mentioned a plan of studying non-radical ideals. Theorem \ref{thm: non-radical} partially accomplishes this goal, with a different approach.
	
	Theorem \ref{thm: Smooth implies ASD} shows that, results of the second type also fit into this framework. The proof of Theorem \ref{thm: Smooth implies ASD} combines several techniques. First, we construct a covering that satisfies the bounded overlap condition for Bergman neighborhoods with large radius. The construction involves a radial-spherical decomposition method in \cite{Xia18}. Then we construct a decomposition formula for each covering set. The generating functions are modified from local canonical defining functions of the variety. Combining these techniques, we show that the ideals in Theorem \ref{thm: Smooth implies ASD} satisfy Hypothesis 1, and therefore are $p$-essentially normal for all $p>n$.
	
	In Section \ref{section: preliminary}, we provide some tools that will be used in this paper. In Section 3 we introduce the asymptotic stable division property, and give a proof of Theorem \ref{thm: ASD to EN}. In Section 4 to Section 6, we prove Theorem \ref{thm: Smooth implies ASD}. In the concluding remarks, we describe our future plans. In the Appendix, we prove some results involving algebraic sets. These results will be used mainly in Section \ref{section: local decomposition formulas}.
	
	\medskip
	\noindent\textbf{Acknowledgment:} The author would like to express her very great appreciation to Ronald G. Douglas in Texas A\& M University for the inspiring discussions and many support. She would also like to offer her special thanks to Jingbo Xia in SUNY Buffalo, for reading a draft of this paper carefully and providing many useful suggestions. The author would like to thank Guoliang Yu,  Emil Straube and Zhizhang Xie in Texas A \& M University, Kunyu Guo in Fudan University, and Xiang Tang in Washington University in St. Louis, for the valuable suggestions and many supports. She would also like to thank Harold Boas, Gregory Pearlstein, J. M. Landsberg in Texas A\& M University, and Mohan Ramachandran in SUNY Buffalo, for answering  many questions in several complex variables and algebraic geometry.
	
	\section{Preliminaries}\label{section: preliminary}
	
	This section contains some basic tools that we are going to use in this paper. Besides the classic tools in the study of operators on the Bergman space, we will also use the theory of complex analytic sets substantively. 
	
	\subsection{Arveson's Lemma}
	The following lemma provides an approach to the Arveson-Douglas Conjecture.
	\begin{lem}\label{lem: Arveson's lem} \cite{Arv05}
		Suppose $\mathcal{P}\subseteq\ber(\bn)$ is a submodule and $\mathcal{Q}$ is the corresponding quotient module. Then for any $p>n$, the following are equivalent.
		\begin{itemize}
			\item[(1)] $\mathcal{P}$ is $p$-essentially normal;
			\item[(2)] $\mathcal{Q}$ is $p$-essentially normal;
			\item[(3)] $[M_{z_i}, P]\in\mathcal{C}_{2p}$, $\forall i=1,\cdots,n$;
			\item[(4)] $[M_{z_i}, Q]\in\mathcal{C}_{2p}$, $\forall i=1,\cdots,n$.
		\end{itemize}
		Here $P, Q$ are the projections onto $\PP$ and $\QQ$, respectively.
	\end{lem}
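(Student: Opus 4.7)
The plan is to reduce everything to bounds on a single family of ``defect'' operators $T_i := PM_{z_i}Q = -[M_{z_i},P]$, and to exploit a positivity that emerges in the diagonal commutators $[R_i,R_i^*]$. The equivalence (3) $\Leftrightarrow$ (4) is immediate from $P+Q=I$, which gives $[M_{z_i},P]=-[M_{z_i},Q]$, together with the ideal property of $\mathcal{C}_{2p}$. By the symmetric roles of $\mathcal{P}$ and $\mathcal{Q}$ (using invariance of $\mathcal{Q}$ under $M_{z_i}^*$ in place of invariance of $\mathcal{P}$ under $M_{z_i}$), it then suffices to prove (1) $\Leftrightarrow$ (3); the argument for (2) $\Leftrightarrow$ (4) is parallel.

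The central algebraic identity is the following: on $\mathcal{P}$,
\[
[R_i,R_j^*] \;=\; P[M_{z_i},M_{z_j}^*]P \;-\;(PM_{z_i}Q)(QM_{z_j}^*P) \;=\; P[M_{z_i},M_{z_j}^*]P - T_i T_j^*.
\]
This follows from the invariance identity $QM_{z_i}P=0$ (so $M_{z_i}P=PM_{z_i}P$) and its dual $PM_{z_j}^*Q=0$ (so $PM_{z_j}^*=PM_{z_j}^*P$), after a short expansion of $R_iR_j^* - R_j^*R_i$. The first summand on the right always lies in $\mathcal{C}_p$ for $p>n$ because $[M_{z_i},M_{z_j}^*]\in\mathcal{C}_p$.

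For (3) $\Rightarrow$ (1), assuming $T_i \in \mathcal{C}_{2p}$ for every $i$, Hölder's inequality for Schatten classes gives $T_i T_j^* \in \mathcal{C}_p$, and the displayed identity yields $[R_i,R_j^*]\in\mathcal{C}_p$. For (1) $\Rightarrow$ (3), I set $j=i$ in the identity: since $[R_i,R_i^*]\in\mathcal{C}_p$ by hypothesis and $P[M_{z_i},M_{z_i}^*]P\in\mathcal{C}_p$, the positive operator $T_i T_i^*$ lies in $\mathcal{C}_p$, so $T_i\in\mathcal{C}_{2p}$ via the Schatten identity $\|T\|_{\mathcal{C}_{2p}}^2=\|TT^*\|_{\mathcal{C}_p}$. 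This gives $[M_{z_i},P]=-T_i\in\mathcal{C}_{2p}$, establishing (3).

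The main subtlety is this last step. The hypothesis (1) only delivers information on the combinations $T_iT_j^*$, which a priori could be small by cancellation even if the individual $T_i$ are large; what rescues the argument is the positivity of the $i=j$ commutator, which forces the cross term to appear as $-T_iT_i^*$ rather than as an arbitrary product, and it is precisely this self-adjoint extraction that explains the doubling of the Schatten exponent between the two sides of the equivalence. The remaining steps are algebraic identities and routine uses of Hölder's inequality together with the two-sided ideal property of $\mathcal{C}_p$.
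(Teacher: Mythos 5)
Your proof is correct, and it reproduces the standard argument from Arveson's paper (the paper under review simply cites [Arv05] and gives no proof of its own). The central identity $[R_i,R_j^*]=P[M_{z_i},M_{z_j}^*]P-T_iT_j^*$ with $T_i=PM_{z_i}Q$ is exactly the right decomposition, and your observation that the diagonal case $i=j$ produces the positive operator $T_iT_i^*$ is the key to extracting the Schatten-$2p$ membership of $T_i$ from Schatten-$p$ data. Two tiny cosmetic remarks: the identity is really an identity of operators on $\ber(\bn)$ after identifying $[R_i,R_j^*]$ with $P[R_i,R_j^*]P$ extended by zero on $\QQ$; and the phrase ``the positive operator $T_iT_i^*$'' is doing a bit more work than it needs to, since $\|T\|_{\mathcal{C}_{2p}}^2=\|TT^*\|_{\mathcal{C}_p}$ holds for any bounded $T$ without reference to positivity --- what positivity actually buys you is that the $j=i$ case isolates $T_iT_i^*$ alone, with no possibility of cancellation, which you do say but which is worth foregrounding over the Schatten norm identity itself. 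For the parallel $(2)\Leftrightarrow(4)$ direction, the analogous identity is $[S_i,S_j^*]=Q[M_{z_i},M_{z_j}^*]Q+T_j^*T_i$ (note the sign flip), and the $i=j$ case gives $T_i^*T_i\in\mathcal{C}_p$, which again implies $T_i\in\mathcal{C}_{2p}$; you correctly flagged this as parallel rather than literally symmetric.
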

	Notice that
	\[
	[M_{z_i}, Q]^*=QM_{z_i}^*-M_{z_i}^*Q=QM_{z_i}^*-QM_{z_i}^*Q=QM_{z_i}^*P.
	\]
	Compared with the cross commutators $[M_{z_i}, M_{z_j}^*]$, the operators $QM_{z_i}^*P$ are easier to work with. We will use Lemma \ref{lem: Arveson's lem} in the proofs in Section \ref{section ASD and EN}.
	
	\subsection{Complex Analytic Sets}
	The definitions and results come from \cite{Chi}. 
	\begin{defn}
		Let $\Omega$ be a complex manifold. A set $A\subset\Omega$ is called a \emph{(complex) analytic subset} of $\Omega$ if for each point $a\in\Omega$ there are a neighborhood $U$ of $a$ and functions $f_1,\cdots,f_N$ holomorphic in $U$ such that
		$$
		A\cap U=\{z\in U: f_1(z)=\cdots=f_N(z)=0\}.
		$$
		A point $a\in A$ is called \emph{regular} if there is a neighborhood $V$ of $a$ in $\Omega$ such that $A\cap V$ is a complex submanifold of $\Omega$. The (complex) dimension $\dim_a A$ at $a$ is naturally defined to be the (complex) dimension of the manifold $A\cap V$. 
		
		A point $a\in A$ is called a \emph{singular point} of $A$ if it is not regular. One can show that the set of regular points is dense in $A$. The dimension of $A$ at a singular point is defined as 
		\[
		\dim_a A=\limsup_{\substack{z\to a \\ z\in \mathrm{reg}A}}\dim_z A.
		\]
		$A$ is said to be of \emph{pure} dimension $p$ if $\dim_aA=p$, $\forall a\in A$.
	\end{defn}
	
	In this paper, our main objects of study are algebraic sets in $\cn$, i.e., zero loci of polynomials in $n$-variables.
	By Hilbert's Nullstellensatz, there is a one-to-one correspondence between algebraic sets and radical polynomial ideals, and irreducible algebraic sets correspond to prime ideals. We will also consider powers of radical ideals. Thus it is convenient to use the language of holomorphic chains.
	\begin{defn}\label{defn: holomorphic chain}
		A \emph{holomorphic chain} on a complex manifold $\Omega$ is a formal, locally finite sum $\sum k_j A_j$, where $A_j$ are pairwise distinct irreducible analytic subsets in $\Omega$ and $k_j\neq0$ are integers.  
	\end{defn}
	For $f\in\mathrm{Hol}(\Omega)$, we use the notation $f|_{\sum k_j A_j}=0$ to indicate that $f$ is, locally, a linear combination of functions of the form $\Pi_j\Pi_{i=1}^{k_j}f_{ij}$, where $f_{ij}$ are holomorphic functions vanishing on $A_j$.
	
	\begin{defn}\label{defn: proper maps, finite maps}
		A continuous map $f: X\to Y$ of topological spaces is called \emph{proper} if the pre-image of every compact set $K\subseteq Y$ is a compact set in $X$. The spaces $X$ and $Y$ are assumed to be Hausdorff and locally compact. 
	\end{defn}
	
	Proper maps are important tools in the study of analytic sets. The following results will be used in the proofs.
	\begin{thm}\label{thm: proper projections define analytic covers}
		Let $A$ be an analytic set in $\cn$, $a\in A$, $\dim_aA=p$, $0<p<n$, $U$ a neighborhood of $a$, and $\pi: A\cap U\to U'\subset\CC^p$, $z\mapsto z':=(z_1,\cdots,z_p)$ a proper projection. Then there is an analytic subset $\sigma\subset U'$ of dimension less than $p$ and a natural number $k$ such that
		\begin{itemize}
			\item[(1)] $\pi: A\cap U\backslash\pi^{-1}(\sigma)\to U'\backslash\sigma$ is a locally biholomorphic $k$-sheeted cover, in particular, $\#\pi^{-1}(z')\cap A\cap U=k$ for all $z'\in U\backslash\sigma$.
			\item[(2)] $\pi^{-1}(\sigma)$ is nowhere dense in $A_{(p)}\cap U$. Here $A_{(p)}=\{z\in A: \dim_zA=p\}$.
		\end{itemize}
		In particular, if $A$ is pure of dimension $p$, then $\pi^{-1}(\sigma)$ is nowhere dense in $A$. We say that $\pi$ defines a $k$-sheeted analytic cover. The set $\sigma$ is called the critical set of $\pi$.
	\end{thm}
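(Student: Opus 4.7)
The plan is to build $\sigma$ as a finite union of analytic subsets of $U'$, each of dimension strictly less than $p$, using Remmert's proper mapping theorem as the crucial input, and then to read off (1) and (2) from the construction.

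First I would, after possibly shrinking $U$, decompose $A\cap U$ into the pure $p$-dimensional locus $A_{(p)}\cap U$ together with the residual set $(A\setminus A_{(p)})\cap U$, which is an analytic subset of dimension strictly less than $p$; here I use upper semi-continuity of $z\mapsto\dim_zA$ to ensure that no neighboring point has higher local dimension than $a$. On the open complex manifold $M:=\operatorname{reg}(A_{(p)})\cap U$ of dimension $p$, the projection $\pi$ becomes a holomorphic map between complex manifolds of the same dimension, and its critical locus
\[
C:=\{z\in M:\det d\pi_z=0\}
\]
is analytic in $M$ of dimension at most $p-1$ (or empty). Its closure $\overline{C}$ in $A\cap U$ differs from $C$ only by points of $\operatorname{sing}(A_{(p)})\cup (A\setminus A_{(p)})$, so $\overline{C}$ remains an analytic subset of $A\cap U$ of dimension less than $p$. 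I then define
\[
\sigma:=\pi\bigl((A\setminus A_{(p)})\cap U\bigr)\cup\pi\bigl(\operatorname{sing}(A_{(p)})\cap U\bigr)\cup\pi(\overline{C}).
\]

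Since $\pi|_{A\cap U}$ is proper, Remmert's proper mapping theorem shows that each of the three pieces is analytic in $U'$ of dimension no greater than its source, so $\sigma$ is analytic in $U'$ with $\dim\sigma<p$. For (1), every preimage of a point in $U'\setminus\sigma$ must lie in $M\setminus C$, where $\pi$ is a local biholomorphism; properness forces the fibers to be finite, so $\pi$ restricts to a finite étale covering $A\cap U\setminus\pi^{-1}(\sigma)\to U'\setminus\sigma$. After shrinking $U'$ to a ball around $\pi(a)$, the complement $U'\setminus\sigma$ is connected (a proper analytic subset of a ball in $\CC^p$ has real codimension at least $2$), so the sheet count is a single constant $k$. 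For (2), if $\pi^{-1}(\sigma)\cap A_{(p)}\cap U$ had nonempty interior $V$ in $A_{(p)}\cap U$, then, since $\operatorname{sing}(A_{(p)})\cup C$ is nowhere dense in $A_{(p)}\cap U$, one could choose $v\in V\cap(M\setminus C)$, at which $\pi$ is a local biholomorphism; hence $\pi(V)$ would contain an open neighborhood of $\pi(v)$ in $U'$, contradicting $\pi(V)\subset\sigma$ together with $\dim\sigma<p$.

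The main obstacle is the appeal to Remmert's proper mapping theorem, which carries the entire weight of showing that $\sigma$ is genuinely analytic of the correct dimension; once this is secured, both (1) and (2) reduce to the standard local biholomorphism/ramification picture. Subsidiary technical points are verifying that $\overline{C}$ does not acquire any $p$-dimensional branch when passed from $M$ to $A\cap U$, which is handled by the inclusion $\overline{C}\setminus C\subset\operatorname{sing}(A_{(p)})\cup(A\setminus A_{(p)})$, and shrinking $U'$ to a ball to secure connectedness of $U'\setminus\sigma$ and therefore a single global sheet count $k$.
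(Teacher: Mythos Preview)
The paper does not prove this theorem; it is quoted from Chirka's book \emph{Complex analytic sets} (reference [Chi] in the paper) as background material in the preliminaries. So there is no in-paper argument to compare against, and your outline is essentially the standard textbook proof via the ramification locus and Remmert's proper mapping theorem.

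That said, one step in your write-up is not yet justified. You assert that the closure $\overline{C}$ of the Jacobian critical set in $A\cap U$ ``remains an analytic subset of $A\cap U$''. Closures of analytic sets taken in larger ambient sets need not be analytic, and the inclusion $\overline{C}\setminus C\subset\operatorname{sing}(A_{(p)})\cup(A\setminus A_{(p)})$ does not by itself force analyticity of $\overline{C}$. What you actually need is that the \emph{union} $D:=C\cup\operatorname{sing}(A_{(p)})\cup(A\setminus A_{(p)})$ is analytic in $U$; then $\pi|_D$ is proper and Remmert gives $\sigma=\pi(D)$ analytic of dimension $<p$. A clean way to get this is to work in two stages: first set $\sigma_1:=\pi\bigl(\operatorname{sing}(A_{(p)})\cup(A\setminus A_{(p)})\bigr)$, which is analytic in $U'$ by Remmert; over $U'\setminus\sigma_1$ the source is already the manifold $M$ and the restricted projection is proper, so $\pi(C)\cap(U'\setminus\sigma_1)$ is analytic there. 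To conclude that $\sigma$ is analytic in all of $U'$ one then either invokes Remmert--Stein (with a little care about matching dimensions) or, more directly, replaces the Jacobian description of the branch locus by the discriminant of the Weierstrass-type polynomials obtained from the proper projection, which are holomorphic on all of $U'$ from the outset. Chirka's treatment takes the latter route. Once this point is secured, your arguments for (1) and (2) go through as written.
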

	
	A proper projection on a complex analytic set $A$ gives rise to a set of canonical defining functions \cite{Chi}.
	\begin{defn}\label{defn: CDF}
		(1) Let $a_1,\cdots, a_k\subset\CC^m$, not necessarily distinct. We compose the polynomial
		\[
		P(z,w)=\la z-a_1, w\ra\cdots\la z-a_n, w\ra
		\]
		in the variable $(z,w)\in\CC^{2m}$. One can show that $P(z,w)\equiv^w 0$ if and only if $z$ is one of the points $a_1,\cdots,a_k$. Suppose
		\[
		P(z,w)=\sum_{|\alpha|=k}P_\alpha(z)\bar{w}^\alpha,
		\]
		where $\alpha$ denotes a multi-index $(\alpha_1,\cdots,\alpha_m)$. Then the condition $P(z,w)\equiv^w0$ is equivalent to that $P_\alpha(z)=0$, $\forall \alpha$, $|\alpha|=k$. The polynomials $P_\alpha(z)$ are called the \emph{canonical defining functions} for the system $a=\{a_1,\cdots,a_k\}$.
		
		\medskip
		(2) More generally, suppose $A$ is an analytic subset of $U=U'\times U''\subset\cn$, where $U'\subset\CC^p$, $U''\subset\CC^m$ and let $\pi: (z',z'')\mapsto z'\in\CC^p$. Suppose $\pi|_A:A\to U'$ is a $k$-sheeted analytic cover. Let $\sigma\subset\CC^p$ be the critical set of $\pi|_A$. For each $z'\in U'\backslash\sigma$, 
		\[
		\pi^{-1}(z')\cap A\cap U=\{(z', a_1(z')), \cdots, (z', a_k(z'))\},
		\]
		where $a_i(z')$ are holomorphic functions defined in a small neighborhood of $z'$. Define
		\[
		P_{\pi|_A}(z,w)=\la z''-a_1(z'), w\ra\cdots\la z''-a_k(z'), w\ra,~~~z\in U\backslash\pi^{-1}(\sigma), w\in\CC^m
		\]
		and
		\[
		P_{\pi|_A}(z,w)=\sum_{|\alpha|=k}P_{{\pi|_A},\alpha}(z)\overline{w^\alpha}.
		\]
		Here we write $\alpha=(\alpha_{p+1},\cdots,\alpha_n)$ to be consistant with the coordinates in $\cn$. The coefficients of powers of $z''$ in the functions $P_{{\pi|_A},\alpha}$ are locally bounded holomorphic functions on $U'\backslash\sigma$. Since $\dim\sigma<p$, they can be uniquely extended to holomorphic functions on $U'$. Therefore $P_{{\pi|_A},\alpha}$ extend to holomorphic functions on $U$ (in fact, on $U'\times\CC^m$). They are called the \emph{canonical defining functions} for the projection $\pi$.
		
		\medskip
		(3) One can also define canonical defining functions for holomorphic chains. We will only use the canonical defining functions for $mA$, where $m$ is a positive integer. Then we set $P_{\pi|_{mA}}(z,w)=P_\pi(z,w)^m$ and $P_{\pi|_{mA}}(z,w)=\sum_{|\alpha|=mk}P_{\pi|_{mA},\alpha}(z)\overline{w^\alpha}$. The functions $P_{\pi|_{mA},\alpha}(z)$ will be the canonical defining functions for the holomorphic chain $mA$.
	\end{defn}

\begin{rem}\label{rem: CDF basis}
	We remark that in Definition \ref{defn: CDF}, the functions $P_{\pi|_A}$ and $P_{\pi|_A,\alpha}$ are constructed under a specific choice of basis. Our estimates in this paper involve change of basis. It is convenient to generalize Definition \ref{defn: CDF} to the following ``coordinate-free'' form. Suppose $E=\{e_1,\cdots,e_n\}$ is an orthonormal basis of $\cn$ and $\pi$ is the orthonormal projection onto $\mathrm{span}\{e_1,\cdots,e_p\}$. Suppose $\pi|_A$ is proper. Define
	\[
	P_{\pi|_A, E}(z,w)=\Pi_{a\in\pi^{-1}\pi(z)\cap A}\la z-a,\sum_{i=p+1}^nw_ie_i\ra,\quad z\in\cn, w\in\CC^{n-p}.
	\]
	
	Suppose $l$ is a unitary transformation on $\cn$ and $\pi|_{l(A)}$ is also proper. Let $E_l=\{l^{-1}e_1,\cdots,l^{-1}e_n\}$. Then
	\[
	P_{\pi|_l(A), E}(l(z),w)=\Pi_{a\in\pi^{-1}\pi l(z)\cap l(A)}\la l(z)-a,\sum_{i=p+1}^nw_ie_i\ra=\Pi_{b\in l^{-1}\pi^{-1}\pi l(z)\cap A}\la z-b,\sum_{i=p+1}^nw_il^{-1}(e_i)\ra.
	\]
	In other words,
	\[
	P_{\pi|_l(A), E}(l(z),w)=P_{l^{-1}\pi l|_A, E_l}(z,w).
	\]
	We will use this fact in the proof of Lemma \ref{lem: decomposition on Fu}. In the subsequent discussions, we will omit the subscript $E$ where no confusion is caused.
\end{rem}

	\subsection{M\"{o}bius Transform and Bergman Metric}
	For $z\in\bn$, $z\neq0$, let $P_z$ and $Q_z$ be the orthogonal projections from $\cn$ to $\CC z$ and $z^\perp$, respectively. 
	\begin{defn}
		The \emph{M\"{o}bius transform} $\varphi_z$ is defined by the formula
		\[
		\varphi_z(w)=\frac{z-P_z(w)-(1-|z|^2)^{1/2}Q_z(w)}{1-\la w,z\ra},\quad w\in\bn.
		\]
	\end{defn}
	The following lemma contains some basic properties of the M\"{o}bius transform $\varphi_z$.
	One can find a proof in Chapter 2 of~\cite{Rudin}.
	
	\begin{lem}\label{basic about varphi}
		If $a$, $z$, $w\in\bn$, then
		\begin{itemize}
			\item[(1)] $$1-\langle\varphi_a(z),\varphi_a(w)\rangle=\frac{(1-\langle a,a\rangle)(1-\langle z,w\rangle)}{(1-\langle z,a\rangle)(1-\langle a,w\rangle)}.$$
			\item[(2)] As a consequence of (1),
			$$1-|\varphi_a(z)|^2=\frac{(1-|a|^2)(1-|z|^2)}{|1-\langle z,a\rangle|^2}.$$
			\item[(3)] The Jacobian of the automorphism~$\varphi_z$~is
			$$(J\varphi_z(w))=\frac{(1-|z|^2)^{n+1}}{|1-\langle w,z\rangle|^{2(n+1)}}.$$
		\end{itemize}
	\end{lem}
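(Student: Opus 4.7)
\medskip
\noindent The plan is to establish parts (1)--(3) by direct computation, using the orthogonal decomposition $w = P_a(w) + Q_a(w)$ together with the elementary facts $\la P_a(u), Q_a(v)\ra = 0$, $\la a, Q_a(u)\ra = 0$, $\la a, P_a(u)\ra = \la a, u\ra$, and $\la P_a(u), P_a(v)\ra = \la u, a\ra\la a, v\ra/|a|^2$ (with the degenerate case $a=0$ handled separately, where $\varphi_0(w) = -w$ and all three statements are obvious).

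\medskip
\noindent First I would prove (1). Write the numerator of $\varphi_a(z)$ as $N(z) := a - P_a(z) - (1-|a|^2)^{1/2} Q_a(z)$ and similarly $N(w)$, so that
\[
\la \varphi_a(z), \varphi_a(w)\ra = \frac{\la N(z), N(w)\ra}{(1-\la z, a\ra)\overline{(1-\la w, a\ra)}}.
\]
Expanding $\la N(z), N(w)\ra$ using the orthogonality relations above produces
\[
|a|^2 - \la a, w\ra - \la z, a\ra + \la P_a(z), P_a(w)\ra + (1-|a|^2)\la Q_a(z), Q_a(w)\ra,
\]
which, after substituting $\la P_a(z), P_a(w)\ra = \la z, a\ra\la a, w\ra/|a|^2$ and $\la Q_a(z), Q_a(w)\ra = \la z, w\ra - \la z, a\ra\la a, w\ra/|a|^2$, simplifies to
\[
|a|^2 - \la a, w\ra - \la z, a\ra + \la z, a\ra\la a, w\ra + (1-|a|^2)\la z, w\ra.
\]
Subtracting this expression from $(1-\la z, a\ra)(1-\la a, w\ra)$ yields exactly $(1-|a|^2)(1-\la z, w\ra)$, which gives (1) after dividing by the denominator. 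Part (2) is then the specialization $z = w$.

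\medskip
\noindent For (3), the natural route is to compute $\varphi_a'(w)$ and then take its determinant. Differentiating the definition of $\varphi_a$ via the quotient rule and decomposing into the $P_a$- and $Q_a$-components produces a linear map whose matrix, in an orthonormal basis adapted to the splitting $\cn = \CC a \oplus a^\perp$, has a block-triangular form with one diagonal entry of absolute value comparable to $(1-|a|^2)/(1-\la w, a\ra)^2$ (the $\CC a$-direction) and $n-1$ diagonal entries of absolute value comparable to $(1-|a|^2)^{1/2}/(1-\la w, a\ra)$ (the $a^\perp$-directions). Taking the modulus squared of the product of these eigenvalues yields $(1-|a|^2)^{n+1}/|1-\la w, a\ra|^{2(n+1)}$.

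\medskip
\noindent The main obstacle is purely the bookkeeping in the block computation for (3); once one commits to the basis adapted to $\CC a \oplus a^\perp$, the algebra is routine, but writing it out without error is the only nontrivial task. Parts (1) and (2) reduce to the identity $(1-\la z,a\ra)(1-\la a,w\ra) - \la N(z), N(w)\ra = (1-|a|^2)(1-\la z,w\ra)$, which is a single line of expansion once the four orthogonality relations above are recorded.
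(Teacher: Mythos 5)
Your proof is correct. The paper itself gives no proof of this lemma; it simply cites Chapter~2 of Rudin's \emph{Function Theory in the Unit Ball of $\mathbb{C}^n$}, and your computation is essentially the standard one found there: the orthogonality relations for $P_a$, $Q_a$ reduce (1) to a one-line polynomial identity, (2) follows by specializing $z=w$, and (3) is the determinant of a triangular complex Jacobian in a basis adapted to $\CC a \oplus a^\perp$. Your expansion of $\la N(z),N(w)\ra$ and the resulting identity $(1-\la z,a\ra)(1-\la a,w\ra)-\la N(z),N(w)\ra=(1-|a|^2)(1-\la z,w\ra)$ check out.

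One small point of precision in part (3): you say the diagonal entries have absolute value ``comparable to'' $(1-|a|^2)/|1-\la w,a\ra|^2$ and $(1-|a|^2)^{1/2}/|1-\la w,a\ra|$. Since you are computing an exact determinant, ``comparable'' is too weak; in the adapted basis the diagonal entries of the complex Jacobian matrix are exactly $-\dfrac{1-|a|^2}{(1-w_1|a|)^2}$ and $-\dfrac{(1-|a|^2)^{1/2}}{1-w_1|a|}$ (the latter with multiplicity $n-1$), and the off-diagonal entries occupy only the first column, so the matrix is genuinely lower triangular. Squaring the modulus of the determinant then gives the stated real Jacobian identically, not just up to constants.
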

	
	\begin{defn}
		The \emph{pseudo-hyperbolic metric} $\rho$ is defined by
		\[
		\rho(z,w)=|\varphi_z(w)|,~~~z, w\in\bn.
		\]
		The \emph{hyperbolic metric} $\beta$ is defined by
		\[
		\beta(z,w)=\frac{1}{2}\log\frac{1+\rho(z,w)}{1-\rho(z,w)},~~~z, w\in\bn.
		\]
		$\beta$ is also called the \emph{Bergman metric} on $\bn$. For $r>0$ and $z\in\bn$, denote
		\[
		D(z,r)=\{w: \beta(z,w)<r\}.
		\]
	\end{defn}
	The two metrics $\rho$ and $\beta$ define the same topology on $\bn$. In the estimations, we will use whichever is more convenient. The following lemmas are straightforward to check. We omit the proofs.
	\begin{lem}\label{lem: beta rho}
		For $z, w\in\bn$, we have
		\begin{itemize}
			\item[(1)] $\beta(z,w)\in [-\frac{1}{2}\log(1-\rho^2(z,w)),\log2-\frac{1}{2}\log(1-\rho^2(z,w)))$.
			\item[(2)]$1-\rho^2(z,w)\in[e^{-2\beta(z,w)}, 4e^{-2\beta(z,w)})$.
		\end{itemize}
	\end{lem}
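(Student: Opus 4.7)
The two assertions are equivalent under the monotone transformation $x\mapsto-\tfrac12\log x$, so the plan is to establish (2) directly and then read off (1) by taking $-\tfrac12\log$ of the resulting sandwich.

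For (2) I would start from the defining identity $\beta(z,w)=\tfrac12\log\frac{1+\rho(z,w)}{1-\rho(z,w)}$. Writing $\rho=\rho(z,w)$ and $\beta=\beta(z,w)$ and exponentiating gives $e^{-2\beta}=(1-\rho)/(1+\rho)$. Multiplying numerator and denominator by $1+\rho$ yields the one-line identity
\[
\frac{1-\rho^2}{e^{-2\beta}}=(1+\rho)^2.
\]
Since $\rho\in[0,1)$ whenever $z,w\in\bn$, the right-hand side ranges over $[1,4)$, which is exactly the claim $e^{-2\beta}\le 1-\rho^2<4e^{-2\beta}$ of part (2), with equality at the lower end when $\rho=0$ and strictness preserved at the upper end because $\rho<1$.

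For (1) I would apply $-\tfrac12\log$ (a decreasing function) to the sandwich from (2). The lower bound $e^{-2\beta}\le 1-\rho^2$ becomes $\beta\ge-\tfrac12\log(1-\rho^2)$, and the strict upper bound $1-\rho^2<4e^{-2\beta}$ becomes $-\tfrac12\log(1-\rho^2)>\beta-\log 2$, i.e.\ $\beta<\log 2-\tfrac12\log(1-\rho^2)$. Together these give the interval stated in (1).

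There is no genuine obstacle here; the entire content is the identity $(1-\rho^2)/e^{-2\beta}=(1+\rho)^2$ combined with $\rho\in[0,1)$. The only care needed is to track which of the two endpoints is attained (the lower one, at $\rho=0$) and which stays strict (the upper one, as $\rho\uparrow 1$), so that the interval in (1) is half-open in the direction matching (2).
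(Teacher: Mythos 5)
Your proof is correct; the key identity $(1-\rho^2)/e^{-2\beta}=(1+\rho)^2$ together with $\rho\in[0,1)$ gives both parts cleanly, and the endpoint bookkeeping (equality at $\rho=0$, strictness as $\rho\uparrow1$) is right. The paper omits the proof, labeling the lemma ``straightforward to check,'' so your direct computation from the definition of $\beta$ is exactly the intended argument.
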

	
	\begin{lem}\label{basic computations}
		For $z, w\in\bn$, the following hold.
		\begin{itemize}
			\item[(1)]
			\[
			|1-\la z,w\ra|>\frac{1}{2}(1-|z|^2).
			\]	
			\item[(2)] 
			\[
			1-|\varphi_z(w)|^2\in(\frac{1}{4}(1-|z|^2)(1-|w|^2), 4\frac{1-|z|^2}{1-|w|^2}).
			\]
		\end{itemize}
	\end{lem}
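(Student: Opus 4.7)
The plan is to derive both inequalities from the identity in Lemma \ref{basic about varphi}(2) and elementary estimates on the kernel $|1-\la z,w\ra|$, treating part (1) first and then bootstrapping to part (2).

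For part (1), I would start with the triangle inequality together with Cauchy--Schwarz, which gives
\[
|1-\la z,w\ra|\geq 1-|\la z,w\ra|\geq 1-|z||w|.
\]
The task then reduces to showing $1-|z||w|>\tfrac{1}{2}(1-|z|^2)$ for $z,w\in\bn$. Clearing denominators, this is equivalent to $|z|^2-2|z||w|+1>0$, which follows from viewing the left side as a quadratic in $|z|$ with discriminant $4(|w|^2-1)<0$ (strict since $|w|<1$). This gives a clean, one-line verification of (1).

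For part (2), I would combine Lemma \ref{basic about varphi}(2) with part (1). The identity
\[
1-|\varphi_z(w)|^2=\frac{(1-|z|^2)(1-|w|^2)}{|1-\la w,z\ra|^2}
\]
immediately gives the lower bound: since $|1-\la w,z\ra|\leq 1+|w||z|<2$, we have $|1-\la w,z\ra|^2<4$, so the quotient exceeds $\tfrac{1}{4}(1-|z|^2)(1-|w|^2)$. For the upper bound, I would apply part (1) with the roles of $z$ and $w$ swapped, noting $|1-\la w,z\ra|=|1-\la z,w\ra|$, to obtain $|1-\la w,z\ra|^2>\tfrac{1}{4}(1-|w|^2)^2$. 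Substituting this into the identity yields
\[
1-|\varphi_z(w)|^2<\frac{(1-|z|^2)(1-|w|^2)}{\tfrac{1}{4}(1-|w|^2)^2}=\frac{4(1-|z|^2)}{1-|w|^2},
\]
which is the desired upper bound.

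There is no substantive obstacle here; both parts are routine manipulations once one notices that the lower bound $|1-\la z,w\ra|>\tfrac{1}{2}(1-|z|^2)$ is self-applying (symmetric in $z,w$ after conjugation) and can therefore be fed back into the M\"obius identity to control both sides of (2). The only point requiring a moment of care is keeping the strict inequalities strict, which is automatic because $z,w$ lie in the open ball and hence $|z|,|w|<1$.
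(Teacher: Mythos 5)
Your proof is correct, and since the paper itself states ``The following lemmas are straightforward to check. We omit the proofs,'' there is no official argument to compare against; your derivation is the natural one. The reduction of (1) to the positivity of the quadratic $t^2-2|w|t+1$ (discriminant $4(|w|^2-1)<0$) is clean, and the bootstrapping of (2) from the M\"obius identity of Lemma \ref{basic about varphi}(2) together with (1) applied with $z,w$ swapped (valid since $|1-\la w,z\ra|=|1-\la z,w\ra|$) is exactly right; the crude bound $|1-\la w,z\ra|<2$ handles the lower estimate. Strictness of all inequalities follows as you note from $|z|,|w|<1$.
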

	
	\subsection{Spherical Distance}
	The following definitions and lemmas will be used in Section \ref{section: a covering lemma}.
	\begin{defn}
		Let $S=\partial\bn$ be the unit sphere in $\cn$. For $\zeta, \xi\in S$, the \emph{spherical distance} $d(\zeta,\xi)$ is defined by
		\[
		d(\zeta,\xi)=|1-\la \zeta,\xi\ra|^{1/2}.
		\]
		Then $d$ defines a metric on $S$ (cf. \cite{Rudin}). For $\delta>0$, denote
		\[
		Q(\zeta,\delta)=\{\xi\in S: d(\xi,\zeta)<\delta\}.
		\]
	\end{defn}
	Let $\sigma$ denote the normalized surface measure on $S$, i.e., $\sigma(S)=1$. For $z, w\in\bn$, we will also write $d(z,w)=|1-\la z,w\ra|^{1/2}$. Then $d$ also satisfies the triangle inequality \cite[Proposition 5.1.2]{Rudin}.
	\begin{lem}{\cite[Proposition 5.1.4]{Rudin}}\label{Q delta size}
		When $n>1$, the ratio $\sigma(Q_\delta)/\delta^{2n}$ increases from $2^{-n}$ to a finite limit $A_0$ as $\delta$ decreases from $\sqrt{2}$ to $0$.
	\end{lem}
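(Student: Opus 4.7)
My plan is to reduce the two-variable spherical-measure question to a one-variable integral via the unitary invariance of $\sigma$ together with the standard slice-integration formula on $S$, and then to establish the monotonicity by direct differentiation. Invariance of $\sigma$ under the unitary group lets me take $\zeta = e_1$, so that $Q(\zeta,\delta) = \{\xi \in S : |1-\xi_1|<\delta^2\}$. Applying the slice-integration formula
\[
\int_S f(\xi_1)\,d\sigma(\xi) = \frac{n-1}{\pi}\int_{|z|<1} f(z)(1-|z|^2)^{n-2}\,dA(z),
\]
which is valid precisely because $n>1$, to $f = \chi_{\{|1-z|<\delta^2\}}$ and then substituting $w = 1-z = re^{i\theta}$ in polar coordinates (so the disk constraint $|z|\le 1$ becomes $r \le 2\cos\theta$ and $1-|z|^2$ becomes $2r\cos\theta - r^2$), this yields the explicit expression
\[
\sigma(Q(\zeta,\delta)) = \frac{n-1}{\pi}\int_0^{\delta^2} r^{n-1}\, h(r)\,dr,\qquad h(r) := \int_{-\arccos(r/2)}^{\arccos(r/2)}(2\cos\theta - r)^{n-2}\,d\theta.
\]

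Once this formula is in hand, the two endpoint evaluations are nearly immediate. At $\delta = \sqrt{2}$ the set $Q(\zeta,\sqrt{2})$ is $S$ minus the single antipodal point $-\zeta$, so $\sigma(Q(\zeta,\sqrt{2})) = 1$ and the ratio equals $2^{-n}$. As $\delta \to 0$, the rescaling $r = \delta^2 u$ together with dominated convergence gives
\[
A_0 := \lim_{\delta\to 0}\frac{\sigma(Q_\delta)}{\delta^{2n}} = \frac{n-1}{n\pi}\int_{-\pi/2}^{\pi/2}(2\cos\theta)^{n-2}\,d\theta < \infty.
\]

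For the monotonicity I would set $s = \delta^2$ and differentiate $F(s) := s^{-n}\int_0^s r^{n-1}h(r)\,dr$; using $s^n h(s) = \int_0^s n r^{n-1} h(s)\,dr$ the derivative rearranges to
\[
F'(s) = \frac{n}{s^{n+1}}\int_0^s r^{n-1}\bigl(h(s) - h(r)\bigr)\,dr,
\]
so the problem reduces to showing $h$ is nonincreasing on $(0,2]$. This is the only genuinely delicate step: as $r$ grows the interval of integration defining $h$ shrinks, the integrand $(2\cos\theta - r)^{n-2}$ decreases pointwise in $r$ (trivially, if $n=2$), and it vanishes at the endpoints $\theta = \pm\arccos(r/2)$, so no spurious boundary term appears when differentiating. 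Both effects push $h$ downward, giving $F'(s)\le 0$. I expect the main obstacle to be bookkeeping --- keeping the constant in the slice-integration formula and the Jacobian of the polar substitution correct --- rather than any substantive analytic difficulty; once $\sigma(Q_\delta)$ has been written explicitly, everything reduces to routine one-variable analysis.
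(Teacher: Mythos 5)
The paper does not prove this lemma; it cites it verbatim as Proposition~5.1.4 of Rudin's \emph{Function Theory in the Unit Ball of $\mathbb{C}^n$}, so there is no in-paper proof to compare against. Your argument is correct, complete, and is in essence the same reduction Rudin uses: unitary invariance to set $\zeta=e_1$, the slice-integration formula (Rudin's Proposition 1.4.7, valid precisely for $n>1$), the polar substitution $w=1-z$ turning $1-|z|^2$ into $r(2\cos\theta-r)$ and giving $\sigma(Q_\delta)=\tfrac{n-1}{\pi}\int_0^{\delta^2} r^{n-1}h(r)\,dr$, the two endpoint evaluations, and monotonicity via the rewriting $F'(s)=\tfrac{n}{s^{n+1}}\int_0^s r^{n-1}(h(s)-h(r))\,dr$ together with the observation that $h$ is nonincreasing (explicitly $h(r)=2\arccos(r/2)$ when $n=2$, and for $n\ge 3$ the integrand vanishes at the endpoints so Leibniz differentiation produces no boundary term). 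All steps check out; I see no gaps.
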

	
	On the punctured unit ball $\bn\backslash\{0\}$, consider the projection
	\[
	\pi_S:\bn\backslash\{0\}\to S,~~z\mapsto \frac{z}{|z|}.
	\]
	For $z, w\in\bn$, we will consider the spherical distance between their projections on $S$. Let us denote
	\[
	\ds(z,w)=d(\pi_S(z),\pi_S(w))=|1-\la\frac{z}{|z|},\frac{w}{|w|}\ra|^{1/2}.
	\]
	
	\begin{lem}\label{ds and d}
		For $z, w\in\bn$, we have
		\begin{itemize}
			\item[(1)] $d^2(z,w)<\ds^2(z,w)+(1-|z|^2)+(1-|w|^2)$.
			\item[(2)] $d^2(z,w)>\frac{1}{2}\ds^2(z,w)$.
		\end{itemize}
	\end{lem}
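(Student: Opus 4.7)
The plan is to reduce both parts to elementary scalar inequalities via a radial-spherical decomposition. Writing $\zeta := z/|z|$, $\xi := w/|w|$, and setting $t := |z||w| \in (0,1)$ and $\alpha := \la \zeta, \xi \ra$ with $|\alpha| \leq 1$, we have $\la z, w \ra = t\alpha$, so $d^2(z,w) = |1 - t\alpha|$ and $\ds^2(z,w) = |1 - \alpha|$. The workhorse identity throughout is
\[
1 - t\alpha = (1-t) + t(1-\alpha).
\]

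For (1), I would apply the triangle inequality to this decomposition to obtain $|1-t\alpha| \leq (1-t) + t|1-\alpha| \leq (1 - |z||w|) + \ds^2(z,w)$. The conclusion then reduces to the elementary scalar inequality $1 - |z||w| < (1-|z|^2) + (1-|w|^2)$, which I would prove by writing $1 - |z||w| = (1-|z|) + |z|(1-|w|) \leq (1-|z|) + (1-|w|)$ and invoking $1 - r < 1 - r^2$ for $r \in (0,1)$, applicable since $z, w \neq 0$ (needed for $\ds$ to be defined) forces $|z|, |w| \in (0,1)$.

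For (2), squaring the desired bound $d^2 > \frac{1}{2}\ds^2$ reduces it to $|1 - t\alpha|^2 > \frac{1}{4}|1-\alpha|^2$. Expanding via the decomposition gives
\[
|1 - t\alpha|^2 = (1-t)^2 + 2t(1-t)(1 - \mathrm{Re}\,\alpha) + t^2|1-\alpha|^2.
\]
The key observation, and the only nontrivial ingredient, is that $2(1 - \mathrm{Re}\,\alpha) - |1-\alpha|^2 = 1 - |\alpha|^2 \geq 0$, i.e., $1 - \mathrm{Re}\,\alpha \geq \frac{1}{2}|1-\alpha|^2$ whenever $|\alpha| \leq 1$. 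Substituting yields the clean lower bound $|1 - t\alpha|^2 \geq (1-t)^2 + t|1-\alpha|^2$.

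Finally I would verify $(1-t)^2 + t|1-\alpha|^2 > \frac{1}{4}|1-\alpha|^2$, i.e., $(1-t)^2 > (\frac{1}{4} - t)|1-\alpha|^2$, by a short two-case analysis: for $t \geq \frac{1}{4}$ the right-hand side is nonpositive while the left-hand side is strictly positive since $t < 1$; for $0 < t < \frac{1}{4}$, using the trivial bound $|1-\alpha|^2 \leq 4$ reduces matters to $(1-t)^2 > 1 - 4t$, equivalently $t^2 + 2t > 0$, which is immediate for $t > 0$. The main obstacle is really only spotting the auxiliary bound $1 - \mathrm{Re}\,\alpha \geq \frac{1}{2}|1-\alpha|^2$, since a direct triangle-inequality argument is too lossy to produce the factor $\frac{1}{2}$ on the right side of (2); everything else is routine calculation.
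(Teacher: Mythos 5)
Your argument is correct and follows essentially the same route as the paper: part (1) uses the identity $1-t\alpha=(1-t)+t(1-\alpha)$ with the triangle inequality exactly as the paper does, and part (2) is a spelled-out proof of the scalar fact $2|1-rc|>|1-c|$ for $r\in(0,1)$, $|c|\leq 1$, which the paper simply cites without proof. The only difference is that you supply the elementary verification of that fact, which the paper omits.
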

	
	\begin{proof}	
		By definition, 
		\[
		d^2(z,w)=|1-\la z,w\ra|,~~~~~\ds^2(z,w)=|1-\la \frac{z}{|z|},\frac{w}{|w|}\rangle|.
		\]
		Therefore,
		\[
		d^2(z,w)\leq (1-|z||w|)+|z||w||1-\la\frac{z}{|z|},\frac{w}{|w|}\ra|<(1-|z|^2)+(1-|w|^2)+\ds^2(z,w).
		\]
		This proves (1).
		
		The proof of (2) relies on the fact that $2|1-rc|>|1-c|$ for any $r\in(0,1)$ and $c\in\CC$, $|c|<1$. So
		\[
		d^2(z,w)=|1-\la z,w\ra|>\frac{1}{2}|1-\la\frac{z}{|z|},\frac{w}{|w|}\ra|=\frac{1}{2}\ds^2(z,w).
		\]
		This completes the proof.
	\end{proof}
	
	\subsection{An Inequality}
	In \cite{DGW}, the following theorem was proved, and then used to obtain $p$-essential normality of principal submodules.
	\begin{thm}\label{thm: key inequality}
		Suppose $h$ is a holomorphic function defined in a neighborhood of $\overline{\bn}$. Then there exist a constant $C>0$ and a positive integer $N$, such that for any $z, w\in\bn$ and any $f\in\mathrm{Hol}(\bn)$, we have
		\begin{equation}\label{eqn: key inequality}
		|h(z)f(w)|\leq C\frac{|1-\la z,w\ra|^N}{(1-|w|^2)^{n+1+N}}\int_{D(w,1)}|h(\lambda)f(\lambda)|dv(\lambda).
		\end{equation}
	\end{thm}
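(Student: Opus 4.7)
The plan is to combine a Bernstein-type pointwise estimate for $h$, which exists because $h$ extends holomorphically past $\partial\bn$, with the sub-mean value property for the holomorphic function $hf$.

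The first step is a Bernstein bound. When $h$ is a polynomial of degree $N$, set $\tilde{h}(\lambda) := h(\varphi_w(\lambda))(1-\la\lambda, w\ra)^N$. Since each component of $\varphi_w$ is a linear expression in $\lambda$ divided by $1-\la\lambda, w\ra$, the function $\tilde{h}$ is a polynomial on $\cn$ with $\sup_{\overline{\bn}} |\tilde{h}| \leq 2^N \sup_{\overline{\bn}} |h|$. Evaluating at $\lambda = \varphi_w(z)$ and using the identity $1-\la\varphi_w(z), w\ra = (1-|w|^2)/(1-\la z, w\ra)$, which is a specialization of Lemma \ref{basic about varphi}(1) with $a = w$, yields
\[
|h(z)| \leq 2^N \sup_{\overline{\bn}}|h| \cdot \frac{|1-\la z, w\ra|^N}{(1-|w|^2)^N}, \qquad z, w \in \bn.
\]
For general $h$ holomorphic in a neighborhood of $\overline{\bn}$, one applies this estimate to the Taylor polynomial of sufficiently high degree and absorbs the small uniform remainder into the constant.

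The second step applies the subharmonicity of $|hf|^2$ together with the standard reverse $L^\infty$-$L^1$ estimate for holomorphic functions on nested Bergman discs: for fixed $r_0 \in (0,1)$,
\[
\sup_{D(w, r_0)}|hf| \leq \frac{C}{(1-|w|^2)^{n+1}} \int_{D(w, 1)} |h(\lambda) f(\lambda)|\, dv(\lambda),
\]
using $D(\lambda', 1-r_0) \subseteq D(w, 1)$ and $1-|\lambda'|^2 \asymp 1-|w|^2$ for $\lambda' \in D(w, r_0)$ (Lemma \ref{basic computations}). Pick $w' \in D(w, r_0)$ with $|h(w')f(w')|$ comparable to this supremum and apply the Bernstein estimate with center $w'$; using the comparabilities $|1-\la z, w'\ra| \asymp |1-\la z, w\ra|$ and $1-|w'|^2 \asymp 1-|w|^2$ on the disc, we obtain
\[
|h(z) f(w)| \lesssim \frac{|1-\la z, w\ra|^N}{(1-|w|^2)^N} |h(w')f(w')| \lesssim \frac{|1-\la z, w\ra|^N}{(1-|w|^2)^{n+1+N}} \int_{D(w, 1)} |hf|\, dv,
\]
provided the comparison $|f(w)| \lesssim |f(w')|$ is in hand.

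The main obstacle is precisely this last comparison: since $w'$ is chosen to maximize $|hf|$ on $D(w, r_0)$ rather than $|f|$ alone, one cannot directly apply sub-mean value to $f$. The remedy is to work intrinsically with the holomorphic function $hf$ on Bergman discs and reduce via the M\"obius transform $\varphi_w$, which is an isometry of the Bergman metric, to a fixed hyperbolic ball where all estimates become uniform. The hypothesis that $h$ extends holomorphically past $\partial\bn$ is essential to make $C$ and $N$ independent of $w \in \bn$, since it furnishes uniform control on the auxiliary polynomial $h(\varphi_w(\cdot))(1-\la\cdot, w\ra)^N$ across all choices of $w$, thereby closing the chain of comparabilities.
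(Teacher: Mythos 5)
Your Bernstein-type pointwise bound $|h(z)|\leq 2^{N}\sup_{\overline{\bn}}|h|\cdot|1-\la z,w\ra|^{N}/(1-|w|^2)^{N}$ is correct, and the sub-mean-value step is also fine, but the argument does not close, and the obstacle is deeper than the missing comparison $|f(w)|\lesssim|f(w')|$ that you flag. Even granting that comparison, your chain stalls: the Bernstein bound is $|h(z)|\leq C\,\sup_{\overline{\bn}}|h|\cdot|1-\la z,w'\ra|^N/(1-|w'|^2)^N$, which when multiplied by $|f(w)|\lesssim|f(w')|$ gives an upper bound $\lesssim\sup|h|\cdot|f(w')|$, whereas you need $\lesssim|h(w')f(w')|$; converting $|f(w')|$ to $|h(w')f(w')|$ requires a lower bound on $|h(w')|$, which is unavailable. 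The basic trouble is that the Bernstein bound sees only the global quantity $\sup_{\overline{\bn}}|h|$, not the size of $h$ on $D(w,1)$. Take $h(z)=z_1-w_1$ and $f\equiv 1$: then $\int_{D(w,1)}|hf|\,dv\asymp(1-|w|^2)^{n+3/2}$, so the right-hand side of \eqref{eqn: key inequality} with $N=1$ is $\asymp|1-\la z,w\ra|/(1-|w|^2)^{1/2}$, whereas your route yields the larger bound $\sup|h|\cdot|1-\la z,w\ra|/(1-|w|^2)^{n+2}\cdot(1-|w|^2)^{n+1}\asymp|1-\la z,w\ra|/(1-|w|^2)$, which exceeds the target by a factor $(1-|w|^2)^{-1/2}\to\infty$. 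So the Bernstein-plus-sub-mean-value route is genuinely too lossy; the concluding paragraph's appeal to ``working intrinsically with $hf$ and reducing via $\varphi_w$'' does not repair this, since no amount of rescaling recovers the lost factor.

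The device the paper uses (Theorem~\ref{thm: key inequality for polynomial}, and \cite{DGW} for the general case) is to Taylor-expand $h$ around $w$ rather than to bound $|h(z)|$ wholesale: write $p(z)=\sum_{|\alpha|\le N}c_\alpha\,\partial^\alpha p(w)\,(z-w)^\alpha$, so that
\[
p(z)f(w)=\sum_{|\alpha|\le N}c_\alpha\,\bigl(\partial^\alpha p(w)\,f(w)\bigr)\,(z-w)^\alpha,
\]
and the substantive lemma (\cite[Lemma~3.2]{DWK}, applied variable-by-variable) bounds each \emph{product} $|\partial^\alpha p(w)f(w)|$ directly by $\int_{D(w,1)}|pf|\,dv$ up to a weight. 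This is the point your proposal omits and where the real work lies: it produces a bound that is local to the data $hf$ near $w$, rather than routing through $\sup_{\overline{\bn}}|h|$. The geometric factor $|1-\la z,w\ra|^{N}/(1-|w|^2)^{N}$ then arises from controlling $|(z-w)^\alpha|$ against $|\varphi_w(z)|<1$, not from a Bernstein estimate; your identity $1-\la\varphi_w(z),w\ra=(1-|w|^2)/(1-\la z,w\ra)$ is in the same spirit but is applied to the wrong object. To repair your proof you would need to prove the key inequality
\[
|\partial^\alpha p(w)\,f(w)|\;\lesssim\;\frac{1}{(1-|w|^2)^{\,\alpha_1+|\alpha'|/2+n+1}}\int_{D(w,1)}|p(\lambda)f(\lambda)|\,dv(\lambda)
\]
(or the analogous coefficient bound after conjugating by $\varphi_w$), which replaces both the Bernstein step and the choice of $w'$.
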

	
	The constants $C$ and $N$ depend on the function $h$. In the case when $h$ is a polynomial, the constants depend only on the degree of $h$. We provide a direct proof here.
	\begin{thm}\label{thm: key inequality for polynomial}
		Suppose $p$ is a polynomial and $N=\deg p$. Then for any $f\in\mathrm{Hol}(\bn)$ and $z, w\in\bn$,
		\begin{equation}\label{eqn: key inequality for polynomial}
		|p(z)f(w)|\leq C\frac{|1-\la z,w\ra|^N}{(1-|w|^2)^{n+1+N}}\int_{D(w,1)}|p(\lambda)f(\lambda)|dv(\lambda).
		\end{equation}
		The constant $C$ depends only on $N$.
	\end{thm}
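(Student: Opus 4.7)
The plan is to reduce (\ref{eqn: key inequality for polynomial}) to a purely local estimate at the origin via a Möbius transformation, then exploit the finite-dimensionality of the space of polynomials of degree at most $N$. Apply the Möbius automorphism $\varphi_w$ of $\bn$ sending $w$ to $0$, and set $z' := \varphi_w(z) \in \bn$, $\tilde f(\mu) := f(\varphi_w(\mu))$, and
\[
P(\mu) := (1-\la \mu, w\ra)^N\, p(\varphi_w(\mu)).
\]
By the rational form of $\varphi_w$, $P$ is a polynomial in $\mu$ of degree at most $N$. The identity $1-\la w, \varphi_w(\mu)\ra = (1-|w|^2)/(1-\la \mu, w\ra)$, derivable from Lemma \ref{basic about varphi}(1), then yields $|p(z)| = \frac{|1-\la z, w\ra|^N}{(1-|w|^2)^N}\,|P(z')|$ and $\tilde f(0) = f(w)$.

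Next, I change variables $\lambda = \varphi_w(\mu)$ in the integral on the right-hand side. The Jacobian $\frac{(1-|w|^2)^{n+1}}{|1-\la \mu, w\ra|^{2(n+1)}}$ from Lemma \ref{basic about varphi}(3), together with the fact that $|1-\la \mu, w\ra|$ is bounded above and below by absolute constants for $\mu \in D(0,1)$, gives
\[
\int_{D(w,1)}|pf|\,dv \asymp (1-|w|^2)^{n+1}\int_{D(0,1)}|P\tilde f|\,dv,
\]
with comparison constants depending only on $N$ and $n$. After cancellation, (\ref{eqn: key inequality for polynomial}) is equivalent to the reduced inequality
\[
|P(z')|\,|\tilde f(0)| \;\leq\; C(N)\int_{D(0,1)}|P(\mu)\tilde f(\mu)|\,dv(\mu),
\]
required for all polynomials $P$ of degree $\leq N$, all holomorphic $\tilde f$ on $\bn$, and all $z' \in \bn$.

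To establish the reduced inequality, I use the finite-dimensionality of the space of polynomials of degree at most $N$. By equivalence of norms on this space, $\sup_{\mu \in \bn}|P(\mu)| \leq C_1(N,n)\sup_{\mu \in \overline{D(0,1/2)}}|P(\mu)|$, so one can choose $\mu_0 = \mu_0(P) \in \overline{D(0,1/2)}$ with $|P(\mu_0)| \geq C_1(N,n)^{-1}|P(z')|$. The sub-mean value inequality applied to the holomorphic product $P\tilde f$ on a Bergman ball $D(\mu_0, r)\subset D(0,1)$ (with $r$ chosen uniformly in $\mu_0$) then produces $|P(z')|\,|\tilde f(\mu_0)| \leq C_2(N)\int_{D(0,1)}|P\tilde f|\,dv$.

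The main obstacle is the final step of replacing $|\tilde f(\mu_0)|$ by $|\tilde f(0)|$ with a constant depending only on $N$ and $n$: since $\mu_0$ depends on $P$ and $\tilde f$ may vanish between $\mu_0$ and $0$, a direct Harnack-type estimate is unavailable. The resolution is to combine a Remez-type inequality for polynomials of degree $\leq N$ (which ensures that the set $\{\mu \in \overline{D(0,1/2)}: |P(\mu)| \geq C_1^{-1}\sup_\bn|P|\}$ has volume bounded below independently of $P$) with the plurisubharmonicity of $|P\tilde f|^{2}$ and the sub-mean value inequality for $\tilde f$ on a Bergman neighborhood of $0$. Averaging the pointwise estimate over acceptable positions of $\mu_0$ and tracking constants throughout yields the reduced inequality with $C$ depending only on $N$ and $n$, which is equivalent to the statement of Theorem \ref{thm: key inequality for polynomial}.
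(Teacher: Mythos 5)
Your Möbius reduction is a genuinely different and elegant route from the paper's. The paper proves the estimate directly at $w$ by iterating a one-variable lemma from \cite{DWK} in an adapted orthonormal frame (treating coordinates one at a time), Taylor-expanding $p$ at $w$, and then controlling the monomials $(z-w)^\alpha$ via the identity $|\varphi_w(z)|^2 = \frac{|z_1-|w||^2}{|1-\la z,w\ra|^2}+\sum_{j\ge2}\frac{(1-|w|^2)|z_j|^2}{|1-\la z,w\ra|^2}$. By contrast, you conjugate everything back to the origin, which is a nice normalization: your reduction to $|P(z')|\,|\tilde f(0)|\le C(N)\int_{D(0,1)}|P\tilde f|\,dv$ for polynomials $P$ of degree $\le N$ is correct, and the observations that $(1-\la\mu,w\ra)^N p(\varphi_w(\mu))$ is again a polynomial of degree $\le N$ and that $|1-\la\mu,w\ra|\asymp 1$ on $D(0,1)$ are exactly the right ones.

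However, the final step — the proof of the reduced inequality — has a genuine gap. You correctly identify that the difficulty is transferring from $|\tilde f(\mu_0)|$ to $|\tilde f(0)|$, but the proposed fix (Remez bound on the volume of $E=\{\mu\in\overline{D(0,1/2)}:|P(\mu)|\ge C_1^{-1}\sup_{\bn}|P|\}$, plus averaging the pointwise estimate over $\mu_0\in E$) does not close it. Averaging over $E$ gives control of $\frac{1}{|E|}\int_E|\tilde f|\,dv$, but a holomorphic function can be small on a set of positive measure and large at a point outside it (e.g.\ $\tilde f(\mu)=e^{M\la\mu,a\ra}$ has $|\tilde f(0)|=1$ yet is exponentially small on any fixed set in $\{\mathrm{Re}\,\la\mu,a\ra<-\delta\}$), so a bound on $\tilde f$ over $E$ does not bound $\tilde f(0)$. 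Moreover, since $P(0)$ may vanish, $0$ can genuinely be excluded from $E$ (take $P(\mu)=\mu_1^N$), so nothing special about $E$ rescues the argument. What is actually needed is a finer use of the polynomial structure of $P$ near $0$: for instance, reducing to a complex line $L$ through $0$ on which $P|_L$ has degree $\deg P$, then on that line using Jensen's formula — $\int_0^{2\pi}\log|P(\rho e^{i\theta}\zeta)|\,d\theta/2\pi=\log|c|+\sum_j\log\max(\rho,|\zeta_j|)$ together with the Mahler-measure lower bound forced by $\sup_{\bn}|P|=1$ — and the log-subharmonicity of $|\tilde f|$, so that circular means of $|P\tilde f|$ dominate a fixed multiple of $|\tilde f(0)|$. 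This is precisely the kind of careful one-complex-variable estimate that the cited Lemma 3.2 of \cite{DWK} encapsulates and that the paper's proof iterates coordinate by coordinate; your sketch, as written, does not substitute for it.
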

	
	\begin{proof}
		For $w\in\bn$, $w\neq0$ and $a, b>0$, denote
		\[
		Q_w(a, b)=\{z\in\bn:~|P_w(z)-w|<a(1-|w|^2), |Q_w(z)|<b(1-|w|^2)^{1/2}\}.
		\]
		From \cite[2.2.7]{Rudin}, there exist $a, b$ such that $D(w, 1)$ contains $Q_w(a, b)$ for any $w\in\bn$, $w\neq0$. 
		
		For a polynomial $p$ with $\deg p=N$ and for $w\in\bn$, choose an orthonormal basis $\{e_1,\cdots, e_n\}$ such that $e_1=\frac{w}{|w|}$. Then $w=(|w|,0,\cdots,0)$. In the case $w=0$, choose any orthonormal basis. For any multi-index $\alpha=(\alpha_1,\cdots,\alpha_n)$ such that $|\alpha|\leq N$, applying \cite[Lemma 3.2]{DWK} to the one variable polynomial $\partial^{\alpha_2}\cdots\partial^{\alpha_n}p(\cdot,w_2,\cdots,w_n)$, we get
		\begin{eqnarray*}
			&&|\partial^\alpha p(|w|,0,\cdots,0)f(|w|,0,\cdots,0)|\\
			&\lesssim&\frac{1}{(1-|w|^2)^{\alpha_1+2}}\int_{|\lambda_1-|w||<a(1-|w|^2)}|\partial^{\alpha_2}\cdots\partial^{\alpha_n}p(\lambda_1,0,\cdots,0)f(\lambda_1,0,\cdots,0)|dv(\lambda_1).
		\end{eqnarray*}
		Applying  \cite[Lemma 3.2]{DWK} again to $\partial_3^{\alpha_3}\cdots\partial_n^{\alpha_n}p(\lambda_1,\cdot,w_3,\cdots,w_n)$, we get
		\begin{eqnarray*}
			&&|\partial_2^{\alpha_2}\cdots\partial_n^{\alpha_n}p(\lambda_1, 0,\cdots,0)f(\lambda_1,0,\cdots,0)|\\
			&\lesssim&\frac{1}{(1-|w|^2)^{\alpha_2/2+1}}\int_{|\lambda_2|<\frac{b}{\sqrt{n-1}}(1-|w|^2)^{1/2}}|\partial_3^{\alpha_3}\cdots\partial_n^{\alpha_n}p(\lambda_1,\lambda_2,0,\cdots,0)f(\lambda_1,\lambda_2,0,\cdots,0)|dv(\lambda_2).
		\end{eqnarray*}
		Inductively, for any $k=1,\cdots,n-1$,
		\begin{eqnarray*}
			&&|\partial_{k+1}^{\alpha_{k+1}}\cdots\partial_n^{\alpha_n}p(\lambda_1,\cdots,\lambda_k,0,\cdots,0)f(\lambda_1,\cdots,\lambda_k,0,\cdots,0)|\\
			&\lesssim&\frac{1}{(1-|w|^2)^{\alpha_k/2+1}}\int_{|\lambda_{k+1}|<\frac{b}{\sqrt{n-1}}(1-|w|^2)^{1/2}}|\partial_{k+2}^{\alpha_{k+2}}\cdots\partial_n^{\alpha_n}p(\lambda_1,\cdots,\lambda_{k+1},0,\cdots,0)\\
			&&\quad\quad f(\lambda_1,\cdots,\lambda_{k+1},0,\cdots,0)|dv(\lambda_{k+1}).
		\end{eqnarray*}
		Combining the inequalities above, we get
		\begin{eqnarray*}
			|\partial^\alpha p(w)f(w)|&\lesssim&\frac{1}{(1-|w|^2)^{\alpha_1+|\alpha'|/2+n+1}}\int_{\substack{|\lambda_1-w_1|<a(1-|w|^2)\\|\lambda'|<b(1-|w|^2)^{1/2}}}|p(\lambda)f(\lambda)|dv(\lambda)\\
			&\leq&\frac{1}{(1-|w|^2)^{\alpha_1+|\alpha'|/2+n+1}}\int_{D(w,1)}|p(\lambda)f(\lambda)|dv(\lambda).
		\end{eqnarray*}
		Since $p(z)=\sum_{|\alpha|\leq N}c_\alpha\partial^\alpha p(w)(z-w)^\alpha$, where $c_\alpha$ are the Taylor coefficients, we have
		\[
		|p(z)f(w)|\lesssim\sum_{|\alpha|\leq N}\frac{|z_1-|w||^{\alpha_1}|z_2|^{\alpha_2}\cdots|z_n|^{\alpha_n}}{(1-|w|^2)^{\alpha_1+|\alpha'|/2+n+1}}\int_{D(w,1)}|p(\lambda)f(\lambda)|dv(\lambda).
		\]
		Notice that
		\[
		\frac{|z_1-|w||^2}{|1-\la z,w\ra|^2}+\sum_{j=2}^n(1-|w|^2)\frac{|z_j|^2}{|1-\la z,w\ra|^2}=|\varphi_w(z)|^2<1.
		\]
		We have
		\[
		|z_1-|w||<|1-\la z,w\ra|,\quad |z_j|<\frac{|1-\la z,w\ra|}{(1-|w|^2)^{1/2}}, j=2,\cdots,n.
		\]
		Therefore
		\begin{eqnarray*}
			|p(z)f(w)|&\lesssim&\sum_{|\alpha|\leq N}\frac{|1-\la z,w\ra|^{\alpha_1+|\alpha'|/2}}{(1-|w|^2)^{\alpha_1+|\alpha'|/2+n+1}}\int_{D(w,1)}|p(\lambda)f(\lambda)|dv(\lambda)\\
			&\lesssim&\frac{|1-\la z,w\ra|^N}{(1-|w|^2)^{N+n+1}}\int_{D(w,1)}|p(\lambda)f(\lambda)|dv(\lambda).
		\end{eqnarray*}
		From the previous argument, we know that the controlling constant depends only on $N$. This completes the proof.
	\end{proof}
	
	\subsection{Some Useful Computations}
	The following inequality will be useful in subsequent estimates. Its proof is a direct application of the H\"{o}lder's inequality.
	\begin{lem}\label{sum and square}
		For a positive integer $M$ and $a_1, \cdots, a_M>0$, 
		\[
		(a_1+a_2+\cdots+a_M)^2\leq M(a_1^2+a_2^2+\cdots+a_M^2).
		\]
	\end{lem}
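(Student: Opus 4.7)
The plan is to view $a_1 + \cdots + a_M$ as an inner product and apply the Cauchy--Schwarz inequality (which is the $p = q = 2$ case of Hölder's inequality), exactly as the sentence before the lemma hints. Write each term as $a_i \cdot 1$, so that
\[
a_1 + a_2 + \cdots + a_M = \sum_{i=1}^{M} a_i \cdot 1.
\]
Then Cauchy--Schwarz in $\mathbb{R}^M$ (or in $\ell^2(\{1,\ldots,M\})$) gives
\[
\Bigl(\sum_{i=1}^{M} a_i \cdot 1\Bigr)^{2} \;\leq\; \Bigl(\sum_{i=1}^{M} a_i^{2}\Bigr)\Bigl(\sum_{i=1}^{M} 1^{2}\Bigr) \;=\; M\sum_{i=1}^{M} a_i^{2},
\]
which is exactly the asserted inequality. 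The positivity hypothesis $a_i > 0$ is not really used for the inequality itself; Cauchy--Schwarz holds for real (or even complex) entries, so the inequality is true for arbitrary real $a_i$ with the same proof.

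There is essentially no obstacle here. One can equivalently verify the inequality by expanding $(\sum a_i)^2 = \sum_i a_i^2 + 2\sum_{i<j} a_i a_j$ and using $2 a_i a_j \leq a_i^2 + a_j^2$ to bound the cross terms, which gives
\[
\Bigl(\sum_{i=1}^M a_i\Bigr)^2 \;\leq\; \sum_{i=1}^M a_i^2 + \sum_{i \neq j} \tfrac{1}{2}(a_i^2 + a_j^2) \;=\; M \sum_{i=1}^M a_i^2,
\]
but the Cauchy--Schwarz route is cleaner and matches the remark made in the paper. Equality holds if and only if all the $a_i$ are equal. Since the lemma is used only as a uniform book-keeping device in the subsequent Schatten-class estimates, no sharper form is needed.
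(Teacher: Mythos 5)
Your proof is correct and follows exactly the route the paper indicates: the paper omits the proof, remarking only that it ``is a direct application of the H\"{o}lder's inequality,'' and your Cauchy--Schwarz argument (Hölder with $p=q=2$) is precisely that.
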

	
	We will use the following version of Schur's test.
	\begin{lem}\label{lem: Schur's test}
		Let $(X, d\mu)$ and $(X,d\nu)$ be measure spaces and $T$ be an integral operator with non-negative integral kernel $K(x,y)$,
		\[
		Tf(x)=\int_Xf(y)K(x,y)d\mu(y),\quad x,y\in X.
		\]
		Suppose there exist a $\mu$-measurable positive function $h$ and a $\nu$-measurable positive function $g$ on $X$ such that 
		\[
		\int_Xh(y)K(x,y)d\mu(y)\leq A g(x),\quad a.e. [\nu],
		\]
		and
		\[
		\int_Xg(x)K(x,y)d\nu(x)\leq B h(y),\quad a.e. [\mu].
		\]
		then $T$ defines a bounded operator from $L^2(X,d\mu)$ to $L^2(X,d\nu)$ and $\|T\|\leq A^{1/2}B^{1/2}$.
	\end{lem}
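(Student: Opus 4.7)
The plan is to run the standard weighted Cauchy--Schwarz argument that underlies every version of Schur's test. First I would fix $f \in L^2(X, d\mu)$ and, in the integral defining $Tf(x)$, split the kernel symmetrically and insert the weight $h$ as
\[
K(x,y)|f(y)| = \bigl(K(x,y) h(y)\bigr)^{1/2} \cdot \bigl(K(x,y) h(y)^{-1}\bigr)^{1/2} |f(y)|.
\]
Applying Cauchy--Schwarz in the $y$ variable against $d\mu$ then yields
\[
|Tf(x)|^{2} \leq \left(\int_{X} K(x,y) h(y)\, d\mu(y)\right) \left(\int_{X} K(x,y) h(y)^{-1} |f(y)|^{2}\, d\mu(y)\right).
\]
The first factor is bounded by $A\, g(x)$ thanks to the first hypothesis (for $\nu$-a.e.\ $x$), so
\[
|Tf(x)|^{2} \leq A\, g(x) \int_{X} K(x,y) h(y)^{-1} |f(y)|^{2}\, d\mu(y).
\]

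Next I would integrate this inequality against $d\nu(x)$ and use Tonelli's theorem to interchange the order of integration; this is legitimate because $K$, $g$, $h$ are non-negative. The resulting expression is
\[
\|Tf\|_{L^{2}(d\nu)}^{2} \leq A \int_{X} h(y)^{-1} |f(y)|^{2} \left(\int_{X} g(x) K(x,y)\, d\nu(x)\right) d\mu(y).
\]
The inner integral is controlled by $B\, h(y)$ by the second hypothesis (for $\mu$-a.e.\ $y$), and the factor $h(y)^{-1} \cdot h(y)$ cancels, leaving
\[
\|Tf\|_{L^{2}(d\nu)}^{2} \leq AB \int_{X} |f(y)|^{2}\, d\mu(y) = AB \|f\|_{L^{2}(d\mu)}^{2}.
\]
Taking square roots gives the desired bound $\|T\| \leq A^{1/2} B^{1/2}$.

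There is no real obstacle here since this is the textbook argument; the only mild care points are verifying that the Cauchy--Schwarz step is valid when the right-hand factor might a priori be infinite (in which case the inequality holds trivially, and otherwise one proceeds as above) and checking that the Fubini/Tonelli interchange is justified, which it is because all integrands are non-negative and measurable. Positivity of $h$ and $g$ ensures that the weighting is never degenerate, so no additional integrability assumption on $f$ beyond $f \in L^{2}(X, d\mu)$ is needed.
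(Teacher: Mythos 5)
Your proof is correct and is the standard weighted Cauchy--Schwarz argument for Schur's test. The paper states this lemma without proof (it is treated as a well-known fact), so there is no paper proof to compare against; your argument is exactly the one any reference would give.
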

	We want to apply Schur's test to operators determined by the following integral kernels. For any $r>0$ and non-negative integers $l>0$, $0<d<n$, define
	\[
	T_lf(z)=\int_{\bn} f(w)\frac{(1-|w|^2)^l}{|1-\la z,w\ra|^{n+1+l}}dv(w),
	\]
	\[
	T_{d,l}f(z)=\int_{\bn} f(w)\frac{(1-|w'|^2)^l}{|1-\la z',w'\ra|^{n+1+l}}dv(w),
	\]
	\[
	\tilde{T}_lf(z)=\int_{\bn} f(w)\frac{(1-|w|^2)^l}{|1-\la z,w\ra|^{n+1/2+l}}dv(w),
	\]
	\[
	T_l^rf(z)=\int_{D(z,r)}f(w)\frac{(1-|w|^2)^l}{(1-\la z,w\ra)^{n+1+l}}dv(w),
	\]
	\[
	R_l^rf(z)=\int_{D(z,r)^c}f(w)\frac{(1-|w|^2)^l}{|1-\la z,w\ra|^{n+1+l}}dv(w)
	\]
	\[
	R_{l,d}^rf(z)=\int_{D((z',0),r)^c}f(w)\frac{(1-|w'|^2)^l}{|1-\la z',w'\ra|^{n+1+l}}dv(w).
	\]
	Here $w'=(w_1,\cdots,w_d)$.
	
	\begin{lem}\label{lem: norm estimates for integral kernels}
		For the operators defined above, the following hold.
		\begin{itemize}
			\item[(1)] For any positive integer $l$ and $0<d<n$, $T_l, T_{d,l}$ define bounded operators on $L^2(\bn)$. If $l\geq1$, then $T_l$ also defines a bounded operator on $L^2(\bn, (1-|z|^2)dv(z))$.
			\item[(2)] For $l\geq0$, $\tilde{T}_l$ defines a bounded operator from $L^2(\bn,(1-|z|^2)dv(z))$ to $L^2(\bn)$.
			\item[(3)] For any positive integer $l$ and for any $f\in\mathrm{Hol}(\bn)$,
			\[
			T_l^rf(z)=c_{r,l}f(z),
			\]
			where $c_{r,l}=\int_{D(0,r)}(1-|z|^2)^ldv(z)$.
			\item[(4)] For any $0<d<n$, any positive integer $l$ and any $r>0$, the operators $R_l^r$ and $R_{l,d}^r$ define bounded operators on $\ber(\bn)$. Moreover, 
			$$
			\max\{\|R_l^r\|, \|R_{l,d}^r\|\}\leq \epsilon_{r,l},
			$$
			where we have $\epsilon_{r,l}\to0$ as $r\to\infty$, for fixed $l$.
		\end{itemize}
	\end{lem}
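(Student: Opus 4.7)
The plan is to apply Schur's test (Lemma \ref{lem: Schur's test}) to parts (1), (2), and (4) with test functions of the form $(1-|z|^2)^{-s}$, combined with the standard Forelli--Rudin integral estimate
\[
\int_{\bn}\frac{(1-|w|^2)^c}{|1-\la z,w\ra|^{n+1+c+t}}\,dv(w)\;\asymp\;(1-|z|^2)^{-t}\qquad (c>-1,\ t>0);
\]
part (3) will be a mean-value computation after a M\"{o}bius change of variables. For $T_l$ on $L^2(\bn)$ in part (1), Schur's test with $h(z)=(1-|z|^2)^{-1/2}$ closes: taking $c=l-1/2$ and $t=1/2$ returns $h(z)$ up to a constant in both Schur conditions. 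The weighted version on $L^2(\bn,(1-|z|^2)dv)$ is handled by rewriting the integral against $d\mu=(1-|w|^2)dv$, which shifts the kernel to $(1-|w|^2)^{l-1}/|1-\la z,w\ra|^{n+1+l}$, and adjusting the exponent $s$; this is where one uses $l\ge 1$. For $T_{d,l}$, since the kernel depends only on the first $d$ coordinates, I would integrate out the last $n-d$ coordinates of $w$ over the fiber $\{|w''|^2<1-|w'|^2\}$, producing a weight $(1-|w'|^2)^{n-d}$ on the ball in $\CC^d$, and then apply Forelli--Rudin in dimension $d$. Part (2) is the same kind of Schur computation with the exponent $n+1/2+l$ calibrated so that the test closes after absorbing the weight between domain and target.

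Part (3) is the most distinctive step. The substitution $w=\varphi_z(\lambda)$ bijects $D(0,r)$ onto $D(z,r)$ since $\varphi_z$ is an involutive Bergman-metric isometry with $\varphi_z(0)=z$. Using Lemma \ref{basic about varphi}(1)--(3) together with its consequence $1-\la z,\varphi_z(\lambda)\ra=(1-|z|^2)/(1-\la z,\lambda\ra)$, all powers of $(1-|z|^2)$ cancel after substitution, yielding
\[
T_l^r f(z)=\int_{D(0,r)} f(\varphi_z(\lambda))\,(1-\la\lambda,z\ra)^{-(n+1+l)}\,(1-|\lambda|^2)^l\,dv(\lambda).
\]
Both $f\circ\varphi_z$ and $(1-\la\lambda,z\ra)^{-(n+1+l)}$ are holomorphic in $\lambda$, so the integrand is a holomorphic function of $\lambda$ times the radial weight $(1-|\lambda|^2)^l$ on the Euclidean ball $D(0,r)$ centered at $0$. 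The mean-value property (or, equivalently, a Taylor expansion whose non-constant monomials vanish against any radial weight on a centered ball) collapses the integral to $f(\varphi_z(0))\cdot 1\cdot c_{r,l}=c_{r,l}f(z)$.

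For part (4), on $D(z,r)^c$ one has $\beta(z,w)\ge r$, so Lemma \ref{lem: beta rho}(2) together with Lemma \ref{basic about varphi}(2) gives the pointwise lower bound
\[
|1-\la z,w\ra|\;\geq\;\tfrac{1}{2}e^{r}(1-|z|^2)^{1/2}(1-|w|^2)^{1/2}.
\]
Borrowing a small exponent $\epsilon>0$ from $|1-\la z,w\ra|^{n+1+l}$ extracts a prefactor $O(e^{-\epsilon r})$ while leaving a standard Forelli--Rudin kernel, to which Schur's test from part (1) (with a slightly adjusted $s$) applies; this yields $\|R_l^r\|\leq C e^{-\epsilon r}\to 0$. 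I would handle $R_{l,d}^r$ analogously, noting that for $w\in D((z',0),r)^c$ the same identities applied at the point $(z',0)\in\bn$ combined with $\la(z',0),w\ra=\la z',w'\ra$ furnish the analogous lower bound on $|1-\la z',w'\ra|$. I expect the main obstacle to be precisely this $R_{l,d}^r$ case: one must verify that the mismatch between a Bergman-ball condition in $\cn$ and a kernel that only sees the first $d$ coordinates does not spoil the Forelli--Rudin reduction, and that the decay constant $\epsilon_{r,l}$ can be taken independent of the low-dimensional structure.
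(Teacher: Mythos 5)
Your treatment of parts (1)--(3) coincides with the paper's: Schur's test with $h(z)=(1-|z'|^2)^{-1/2}$ after integrating out the fiber over $w''$, the weighted calibration for $\tilde T_l$, and the M\"{o}bius change of variables $w=\varphi_z(\lambda)$ (using Lemma~\ref{basic about varphi}) followed by the mean-value property applied to the holomorphic factor against the radial weight $(1-|\lambda|^2)^l$ on the centered ball $D(0,r)$. Your handling of $R_l^r$ by borrowing a small exponent $\epsilon$ from the denominator also works cleanly and closes under Schur's test with $h(z)=(1-|z|^2)^{-1/2}$.

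For $R_{l,d}^r$ the paper takes a genuinely different route. Using
\[
1-|\varphi_{(z',0)}(w)|^2
=\bigl(1-|\varphi_{z'}(w')|^2\bigr)\cdot\frac{1-|w|^2}{1-|w'|^2},
\]
they note that on $D((z',0),r)^c$ at least one of the two factors on the right is smaller than $2e^{-r}$, and they split the region into $E_1=\{\beta_d(z',w')>r/2\}$ and $E_2=\{(1-|w|^2)/(1-|w'|^2)<2e^{-r}\}$. On $E_1$ the kernel decays exactly as in the pure $\mathbb{B}_d$ case; on $E_2$ the kernel is unchanged but the fiber in $w''$ over each $w'$ is a thin spherical shell whose volume carries the factor $O(e^{-r})$, so the Schur integrals pick up the small constant from the shrinking domain rather than from the kernel. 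Your approach avoids this dichotomy by borrowing $\epsilon$ uniformly, and the concern you flag at the end is well-founded: after borrowing, the kernel acquires the factor $(1-|w|^2)^{-\epsilon/2}$, which depends on the \emph{full} variable $w$, not on $w'$, and if one reuses the test function $(1-|z'|^2)^{-1/2}$ from part (1), the second Schur condition produces $(1-|w|^2)^{-\epsilon/2}(1-|w'|^2)^{-1/2+\epsilon/2}$, which is not $\lesssim(1-|w'|^2)^{-1/2}$ (the ratio $(1-|w|^2)^{-\epsilon/2}(1-|w'|^2)^{\epsilon/2}$ is unbounded near the top of the fiber). The fix is to take $h(z)=(1-|z|^2)^{-a}$ with $\epsilon/2<a<1-\epsilon/2$: then after integrating out the fiber the first Schur integral yields $(1-|z'|^2)^{-a}\le(1-|z|^2)^{-a}$, and the second yields $(1-|w|^2)^{-\epsilon/2}(1-|w'|^2)^{-a+\epsilon/2}\le(1-|w|^2)^{-a}$ because $-a+\epsilon/2<0$ and $1-|w'|^2\ge1-|w|^2$. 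So your strategy is correct in principle and provides a more uniform treatment of $R_l^r$ and $R_{l,d}^r$, but it requires this specific choice of test function that you did not pin down; the paper's $E_1\cup E_2$ split sidesteps the issue entirely by keeping the smallness either in the $d$-dimensional kernel or in the fiber measure.
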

	\begin{proof}
		We will only prove the statements for $T_{d,l}, \tilde{T}_l, T_l^r$ and $R_{l,d}^r$. We will use the Rudin-Forelli estimates \cite[Proposition 1.4.10]{Rudin}.
		
		Let $h(z)=(1-|z'|^2)^{-1/2}$. Then
		\begin{eqnarray*}
			&&\int_{\bn}\frac{(1-|w'|^2)^l}{|1-\la z',w'\ra|^{n+1+l}}h(w)dv(w)\\
			&=&\int_{w'\in\mathbb{B}_d}\frac{(1-|w'|^2)^{l-1/2}}{|1-\la z',w'\ra|^{n+1+l}}\int_{|w''|^2<1-|w'|^2}1dv_{n-d}(w'')dv_d(w')\\
			&\approx&\int_{\mathbb{B}_d}\frac{(1-|w'|^2)^{l+n-d-1/2}}{|1-\la z',w'\ra|^{n+1+l}}dv_d(w')\\
			&\lesssim&(1-|z'|^2)^{-1/2}=h(z).
		\end{eqnarray*}
		Here $v_k$ denotes the Lebesgue measure on $\CC^k$. Similarly, we have
		\[
		\int_{\bn}\frac{(1-|w'|^2)^l}{|1-\la z',w'\ra|^{n+1+l}}h(z)dv(z)\lesssim h(w).
		\]
		This proves (1).
		
		To prove (2), take $h(z)=(1-|z|^2)^{-1/2}$ and $g(w)=(1-|w|^2)^{-1}$. We omit the calculations.
		
		For any $f\in\mathrm{Hol}(\bn)$ and any $z\in\bn$,
		\begin{eqnarray*}
			T_l^rf(z)&=&\int_{D(z,r)}f(w)\frac{(1-|w|^2)^l}{(1-\la z,w\ra)^{n+1+l}}dv(w)\\
			&=&\int_{D(0,r)}f\circ\varphi_z(\lambda)\frac{(1-|\varphi_z(\lambda)|^2)^l}{(1-\la z,\varphi_z(\lambda)\ra)^{n+1+l}}\frac{(1-|z|^2)^{n+1}}{|1-\la z,\lambda\ra|^{2(n+1)}}dv(\lambda)\\
			&=&\int_{D(0,r)}f\circ\varphi_z(\lambda)\frac{(1-|\lambda|^2)^l}{(1-\la\lambda,z\ra)^{n+1+l}}dv(\lambda)\\
			&=&c_{r,l}f(z).
		\end{eqnarray*}
		This proves (3).
		
		Let $E=\{(z,w): \beta(w,(z',0))>r\}$. Denote $\beta_d$ the Bergman metric on $\mathbb{B}_d$. If $(z,w)\in E$, then $1-|\varphi_{(z',0)}(w)|^2<4e^{-2r}$. Since
		\begin{eqnarray*}
			1-|\varphi_{(z',0)}(w)|^2&=&\frac{(1-|z'|^2)(1-|w|^2)}{|1-\la z',w'\ra|^2}\\
			&=&(1-|\varphi_{z'}(w')|^2)\frac{1-|w|^2}{1-|w'|^2},
		\end{eqnarray*}
		either $1-|\varphi_{z'}(w')|^2<2e^{-r}$ or $\frac{1-|w|^2}{1-|w'|^2}<2e^{-r}$.
		Let $E_1=\{(z,w): \beta_d(z',w')>\frac{1}{2}r\}$ and $E_2=\{(z,w): \frac{1-|w|^2}{1-|w'|^2}<2e^{-r}\}$.
		Then $E\subset E_1\cup E_2$. It is easy to show that the integral kernels $\chi_{E_1}\frac{(1-|w'|^2)^l}{|1-\la z',w'\ra|^{n+1+l}}$ and $\chi_{E_2}\frac{(1-|w'|^2)^l}{|1-\la z',w'\ra|^{n+1+l}}$	define bounded operators with norms tending to $0$ as $r\to\infty$. This proves (4).
	\end{proof}
	
	We will also use the weighted Bergman norm. For $l$ a positive integer and $f\in\mathrm{Hol}(\bn)$,
	\[
	\|f\|_{L_{a,l}^2}^2=\int_{\bn}|f(z)|^2(1-|z|^2)^ldv(z).
	\]
	The following lemma is well known (cf. \cite{FX13}).
	\begin{lem}\label{lem: sob emb thm for weighted space}
		Let $T$ be a bounded linear operator on $\ber(\bn)$. If there exists a constant $C>0$ such that 
		\[
		\|Tf\|^2\leq C\|f\|_{L_{a,1}^2}^2,\quad\forall f\in\ber(\bn),
		\]
		then $T\in\mathcal{C}_p$ for all $p>2n$.
	\end{lem}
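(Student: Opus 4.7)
The plan is to recast the hypothesis as a positive operator inequality on $\ber(\bn)$ and then appeal to the classical Schatten-class behaviour of radial Toeplitz operators on the Bergman ball. Writing $P$ for the Bergman projection and $T_g = PM_g|_{\ber}$ for the Toeplitz operator with symbol $g$, I first rewrite the weighted norm as a quadratic form on $\ber(\bn)$: for $f\in\ber(\bn)$,
\[
\|f\|_{L_{a,1}^2}^2=\int_{\bn}|f(z)|^2(1-|z|^2)\,dv(z)=\la T_{1-|z|^2}f,f\ra_{\ber}.
\]
Hence the hypothesis $\|Tf\|^2\leq C\|f\|_{L_{a,1}^2}^2$ is equivalent to the positive operator inequality $T^*T\leq C\,T_{1-|z|^2}$ on $\ber(\bn)$.

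Next, I will invoke the comparison of singular numbers under positive operator order. When $0\leq A\leq B$ and $B$ is compact and positive, the min--max characterisation of singular values (Weyl--Ky Fan) gives $s_k(A)\leq s_k(B)$ for every $k$; in particular, $A\in\mathcal{C}_q$ whenever $B\in\mathcal{C}_q$. Applied to $A=T^*T$ and $B=CT_{1-|z|^2}$, this reduces the lemma to showing that $T_{1-|z|^2}\in\mathcal{C}_q$ for every $q>n$, since then $T^*T\in\mathcal{C}_{p/2}$, and therefore $T\in\mathcal{C}_p$, whenever $p>2n$.

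The last step is a direct eigenvalue calculation. The symbol $1-|z|^2$ is radial, so the orthogonal monomials $\{z^\alpha\}$ diagonalise $T_{1-|z|^2}$ with eigenvalues
\[
\lambda_\alpha=\frac{\int_{\bn}(1-|z|^2)|z^\alpha|^2\,dv(z)}{\int_{\bn}|z^\alpha|^2\,dv(z)}\asymp\frac{1}{|\alpha|+1},\qquad |\alpha|\to\infty,
\]
by a standard Beta-function computation. Combined with the counting estimate $\#\{\alpha:|\alpha|=k\}\asymp k^{n-1}$, this yields $\sum_\alpha\lambda_\alpha^q\asymp\sum_k k^{n-1}(k+1)^{-q}$, which is finite precisely when $q>n$. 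So $T_{1-|z|^2}\in\mathcal{C}_q\iff q>n$, completing the reduction.

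I do not expect a genuine obstacle in this argument: the conceptual content is the single operator inequality $T^*T\leq C\,T_{1-|z|^2}$, after which the Schatten threshold $q>n$ is a textbook fact for radial Toeplitz operators on the ball. The only thing to be slightly careful about is verifying that $A=T^*T$ is actually compact (so that the min--max formula applies); but this is automatic from $0\leq A\leq B$ with $B$ compact, since any nonzero point in $\sigma_{\mathrm{ess}}(A)$ would produce an infinite-dimensional subspace on which $A\geq c>0$, contradicting the compactness of $B$.
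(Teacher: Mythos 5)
Your proof is correct. Note that the paper does not actually supply an argument for this lemma; it simply cites it as well known (referencing Fang--Xia). Your write-up is therefore a genuine, self-contained proof rather than a reproduction of the paper's. The three steps are all sound: the identification $\|f\|_{L_{a,1}^2}^2=\la T_{1-|z|^2}f,f\ra$ is immediate since $\la T_{1-|z|^2}f,f\ra=\la (1-|z|^2)f,f\ra_{L^2}$ when $f$ is already holomorphic; the operator inequality $T^*T\le C\,T_{1-|z|^2}$ together with Weyl's monotonicity of eigenvalues under $0\le A\le B$ (with compactness of $A$ inherited from $B$, as you observe) transfers Schatten membership from $T_{1-|z|^2}$ to $T^*T$; and the diagonalization of the radial Toeplitz operator on monomials, with $\lambda_\alpha=\dfrac{1}{n+1+|\alpha|}$ and $\#\{\alpha:|\alpha|=k\}\asymp k^{n-1}$, gives $T_{1-|z|^2}\in\mathcal{C}_q$ precisely for $q>n$. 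Since $T\in\mathcal{C}_p\iff T^*T\in\mathcal{C}_{p/2}$, the threshold $p>2n$ follows. This is the standard route to this fact; a minor stylistic alternative, used in some references, is to bypass the operator inequality and instead estimate $\|T\|_{\mathcal{C}_p}$ directly via the factorization $T=(TB^{-1/2})B^{1/2}$ with $B=CT_{1-|z|^2}$, where $TB^{-1/2}$ extends to a bounded operator by the hypothesis; both arguments use the same eigenvalue asymptotics at the end.
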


	\section{Asymptotic Stable Division Property and Essential Normality}\label{section ASD and EN}
	By Lemma \ref{lem: Arveson's lem}, in order to show that a submodule $\PP$ is $p$-essentially normal, $\forall p>n$, one needs to show that $QM_{z_i}^*P$ is in $\mathcal{C}_p$ for any $p>2n$. That means, for $f\in\PP$, one needs to find an element in $\PP$ that is close enough to $M_{z_i}^*f$. In the case when $\PP$ is principal with generator $h$, the set of functions $\{hf: f\in\poly\}$ is dense in $\PP$. For a function $hf$, a reasonable approximation of $M_{z_i}^*(hf)$ will be $hM_{z_i}^*f$ (cf. \cite{DGW}\cite{DWK}\cite{FX13}\cite{FX18}\cite{GWK}). In general, suppose $\PP$ is generated by $\{h_1,\cdots,h_k\}$, it may happen that $\sum_{j=1}^kh_jf_j$ equals $0$  while $\sum_{j=1}^kh_jM_{z_i}^*f_j$ does not. Thus the distance between $M_{z_i}^*(\sum_{j=1}^kh_jf_j)$ and $\sum_{j=1}^kh_jM_{z_i}^*f_j$ may not be small (compared to $\|\sum_{j=1}^kh_jf_j\|_{L_{a,1}^2}$). 
	
	One can avoid such problems by putting restrictions on the decomposition of $f$. In \cite{Sha}, Shalit considered submodules of the Drury-Arveson module, with the stable division property. For a submodule with the stable division property, one can always find a decomposition $f=\sum_{j=1}^kh_jf_j$ with 
	\begin{equation}\label{eqn: SD}
	\sum_{j=1}^k\|h_jf_j\|\leq C\|f\|,
	\end{equation}
	where $C$ is a constant depending only on $\PP$. Shalit showed that graded submodules with stable division property are essentially normal.
	
	We propose the following definition of asymptotic stable division property.
	\begin{defn}\label{defn: ASD}
		Suppose $\PP$ is a submodule of the Bergman module $\ber(\bn)$. $\PP$ is said to have the \emph{asymptotic stable division property} if there exist an invertible operator $T$ on $\PP$, a subset $\{h_i\}_{i\in\Lambda}\subset\PP$, finite or countably infinite, and constants $C_1, C_2$, such that for any $f\in\PP$, there exists $\{g_i\}_{i\in\Lambda}\subset\mathrm{Hol}(\bn)$ with the following properties.
		\begin{itemize}
			\item[(1)] $Tf=\sum_{i\in\Lambda}h_ig_i$, where the convergence is pointwise if $\Lambda$ is countably infinite.
			\item[(2)] 
			\[
			\int_{\bn}\bigg(\sum_{i\in\Lambda}|h_i(z)g_i(z)|\bigg)^2dv(z)\leq C_1\|f\|^2_{\ber}.
			\]
			\item[(3)] 
			\[
			\int_{\bn}\bigg(\sum_{i\in\Lambda}|h_i(z)g_i(z)|\bigg)^2(1-|z|^2)dv(z)\leq C_2\|f\|_{L_{a,1}^2}^2.
			\]
		\end{itemize}
	\end{defn}
	
	Similar to the case of stable division property, we have the following theorem.
	\begin{thm}\label{thm: ASD to EN}
		Suppose $\PP$ is a submodule of $\ber(\bn)$ with the asymptotic stable division property. If the generating functions $h_i$ are all defined in neighborhoods of $\overline{\bn}$ and the sets of constants $\{C_i\}_{i\in\Lambda}$, $\{N_i\}_{i\in\Lambda}$, determined by $h_i$ (as in Theorem \ref{thm: key inequality}), are bounded, then the submodule $\PP$ is $p$-essentially normal for all $p>n$. In particular, if the generating functions $h_i$ are polynomials of bounded degrees, then $\PP$ is $p$-essentially normal for all $p>n$.
	\end{thm}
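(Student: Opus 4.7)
By Arveson's Lemma (Lemma \ref{lem: Arveson's lem}), establishing $p$-essential normality of $\PP$ for every $p>n$ reduces to showing $QM_{z_i}^*P\in\mathcal{C}_s$ for every $s>2n$ and $i=1,\dots,n$, and by Lemma \ref{lem: sob emb thm for weighted space} this will follow from the weighted estimate
\[
\|QM_{z_i}^*(Tf)\|^{2} \;\leq\; C\,\|f\|_{L_{a,1}^2}^{\,2}, \qquad f\in\PP,
\]
after which the boundedness and invertibility of $T$ on $\PP$ transport the bound back to $QM_{z_i}^*P$ acting on all of $\ber(\bn)$.

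Fix $f\in\PP$ and apply the asymptotic stable division property to write $Tf=\sum_{j\in\Lambda}h_j g_j$. My candidate for an approximant to $M_{z_i}^*(Tf)$ inside $\PP$ is
\[
\widetilde f \;:=\; \sum_{j\in\Lambda} h_j\cdot M_{z_i}^*g_j.
\]
Each $h_j$ is bounded on $\bn$ (being holomorphic in a neighborhood of $\overline{\bn}$), so multiplication by $h_j$ is a continuous operator on $\ber(\bn)$; approximating $M_{z_i}^*g_j$ in $\ber(\bn)$ by polynomials $p_{j,k}$ gives $h_j p_{j,k}\in\PP$ and hence $h_j M_{z_i}^*g_j\in\PP$, while property (2) of Definition \ref{defn: ASD} secures the convergence of the sum in $\ber(\bn)$. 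Since $\widetilde f\in\PP$,
\[
\|QM_{z_i}^*(Tf)\|\;\leq\;\|M_{z_i}^*(Tf)-\widetilde f\|,
\]
and a direct computation against the Bergman reproducing kernel produces the explicit formula
\[
M_{z_i}^*(Tf)(w)-\widetilde f(w) \;=\; \sum_{j\in\Lambda}\int_{\bn}\frac{[h_j(z)-h_j(w)]\,\bar z_i\,g_j(z)}{(1-\la w,z\ra)^{n+1}}\,dv(z).
\]

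The technical heart of the proof is a pointwise bound on this integrand via Theorem \ref{thm: key inequality}. A Hefer-type factorisation $h_j(z)-h_j(w)=\sum_{k}(z_k-w_k)\,\widetilde h_{j,k}(z,w)$, combined with the elementary inequality $|z_k-w_k|\lesssim|1-\la z,w\ra|^{1/2}$, introduces an extra half-power of $|1-\la z,w\ra|$ in the numerator, so that the Bergman kernel is replaced by the lower-order kernel $|1-\la w,z\ra|^{-n-1/2}$ handled by Lemma \ref{lem: norm estimates for integral kernels}(2). Theorem \ref{thm: key inequality}, applied to each $(h_j,g_j)$ with constants $C_j, N_j$ that are by hypothesis uniformly bounded, controls $|h_j(z)g_j(z)|$ pointwise and allows me to sum the contributions uniformly in $j$. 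Invoking Schur's test (Lemma \ref{lem: Schur's test}) with the resulting kernel yields
\[
\|M_{z_i}^*(Tf)-\widetilde f\|^{\,2} \;\lesssim\; \int_{\bn}\Bigl(\sum_{j\in\Lambda}|h_j(z)g_j(z)|\Bigr)^{2}(1-|z|^2)\,dv(z),
\]
and property (3) of Definition \ref{defn: ASD} bounds the right-hand side by $C_2\,\|f\|_{L_{a,1}^2}^{\,2}$.

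I expect the main obstacle to be the bookkeeping in the previous paragraph: the Hefer factorisation, the key inequality, and the Bergman kernel have to align so that the resulting integral kernel is exactly of the type treated by Lemma \ref{lem: norm estimates for integral kernels}, and the constants must not depend on $j$. In the single-generator case ($|\Lambda|=1$, $T=\mathrm{id}$) this is exactly the argument of \cite{DGW}; the new content here is the uniformity over $\Lambda$, which is precisely what the hypothesis $\sup_j C_j,\sup_j N_j<\infty$ provides. When the $h_j$ are polynomials of uniformly bounded degree, Theorem \ref{thm: key inequality for polynomial} produces the uniform constants automatically, so the polynomial clause of the theorem follows as a direct corollary of the general statement.
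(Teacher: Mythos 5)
Your overall framework is the correct one and matches the paper's: reduce via Arveson's Lemma to a weighted $L^2$ bound on $QM_{z_i}^*TP$, construct an approximant $\widetilde f\in\PP$ to $M_{z_i}^*(Tf)$ from the decomposition $Tf=\sum_j h_jg_j$, estimate the error by the key inequality and Schur's test, and transport the bound via invertibility of $T$. However, there are two genuine gaps in the estimate, both of which the paper's proof is specifically engineered to avoid.

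The first gap is the Hefer factorisation $h_j(z)-h_j(w)=\sum_k(z_k-w_k)\,\widetilde h_{j,k}(z,w)$. After this step the integrand involves $\widetilde h_{j,k}(z,w)\,g_j(z)$ rather than $h_j(z)g_j(z)$, and at that point the ASD property can no longer be invoked. The ASD property gives control only over $\sum_j|h_jg_j|$ in the relevant weighted $L^2$ norms; it says nothing about $\sum_j|g_j|$, and the individual $g_j$ need not be bounded or even belong to $\ber(\bn)$ (they can blow up near the zero set of $h_j$). Consequently, the integral $\int_{\bn}|g_j(z)|/|1-\la w,z\ra|^{n+1/2}\,dv(z)$ that results from your factorisation has no useful bound in terms of the quantities controlled by Definition~\ref{defn: ASD}. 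Your claim that the key inequality ``controls $|h_j(z)g_j(z)|$ pointwise and allows me to sum the contributions'' is not consistent with the integrand you actually have, because the $h_j$ factor has been replaced by $\widetilde h_{j,k}$. The paper avoids this entirely by \emph{not} factoring $h_j(z)-h_j(w)$; instead it factors the conjugate coordinate $\bar w_i - \bar z_i$, which furnishes the half-power gain $|1-\la z,w\ra|^{1/2}$ without touching the product $h_jg_j$. The two resulting terms then contain either $h_j(w)g_j(w)$ (same point) or $h_j(z)g_j(w)$ (split points, handled exactly by Theorem~\ref{thm: key inequality}), and both funnel back to $\int|h_jg_j|$.

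The second gap is the omission of the weighted adjoint. After applying Theorem~\ref{thm: key inequality} to $|h_j(z)g_j(w)|$, a singular factor $(1-|w|^2)^{-(n+1+N)}$ appears in the kernel; with the ordinary Bergman kernel this produces, after Fubini, something like $(1-|\lambda|^2)^{-N}/|1-\la w,\lambda\ra|^{n+1-N}$, which is far too singular for Lemma~\ref{lem: norm estimates for integral kernels}. The paper deals with this by first replacing $M_{z_i}^*$ by the weighted adjoint $M_{z_i}^{(l)*}$ built from $K_w^{(l)}(z)(1-|w|^2)^l$ with $l>N$ (the difference $M_{z_i}^*-M_{z_i}^{(l)*}$ is separately shown to be Schatten), so that after the key inequality the kernel becomes $(1-|\lambda|^2)^{l-N}/|1-\la w,\lambda\ra|^{n+1/2+l-N}$ with positive power of $(1-|\lambda|^2)$, which Lemma~\ref{lem: norm estimates for integral kernels}~(2) handles. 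Your unweighted approximant $\widetilde f=\sum_j h_j M_{z_i}^*g_j$ cannot absorb this singularity, and it also has a subsidiary well-posedness issue: $M_{z_i}^*g_j$ need not be an $\ber$-element when $g_j\notin L^2$, so the polynomial-approximation argument that $h_jM_{z_i}^*g_j\in\PP$ does not go through; the paper instead uses the weighted integrals $G_j$ together with \cite[Proposition 5.5]{DGW} to show $h_jG_j$ lies in the principal submodule generated by $h_j$.
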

	
	\begin{proof}
		Denote $P$ the projection operator onto $\PP$ and $Q$ the projection operator onto $\PP^\perp$.
		By Lemma \ref{lem: Arveson's lem}, it suffices to show that $[M_{z_k}^*, P]=QM_{z_k}^*P$ is in $\mathcal{C}_p$, $\forall p>2n$. Let $N=\max\{N_i: i\in\Lambda\}$. Since $C_i$ are uniformly bounded, by Lemma \ref{basic computations} (1), there is a constant $C$ such that inequality \eqref{eqn: key inequality} holds for all $h_i$ with constants $C$ and $N$. Choose a positive integer $l>N$. Define
		\[
		M_{z_k}^{(l)*}f(z)=c_l^{-1}\int_{\bn}\bar{w_k}f(w)K_w^{(l)}(z)(1-|w|^2)^ldv(w),
		\]
		where $K_w^{(l)}(z)=\frac{1}{(1-\la z,w\ra)^{n+l+1}}$ is the weighted reproducing kernel, and $c_l=\int_{\bn}(1-|w|^2)^ldv(w)$.
		For $f\in\ber(\bn)$, 
		\begin{eqnarray*}
			&&|M_{z_k}^*f(z)-M_{z_k}^{(l)*}f(z)|\\
			&=&\bigg|\int_{\bn}(\bar{w_k}-\bar{z_k})f(w)K_w(z)dv(w)-c_l^{-1}\int_{\bn}(\bar{w_k}-\bar{z_k})f(w)K_w^{(l)}(z)(1-|w|^2)^ldv(w)\bigg|\\
			&\lesssim&\int_{\bn}|f(w)|\frac{|w-z|}{|1-\la z,w\ra|^{n+1}}dv(w)\\
			&\lesssim&\int_{\bn}|f(w)|\frac{1}{|1-\la z,w\ra|^{n+1/2}}dv(w).
		\end{eqnarray*}
		By Lemma \ref{lem: norm estimates for integral kernels} and Lemma \ref{lem: sob emb thm for weighted space}, $M_{z_k}^*-M_{z_k}^{(l)*}$ is in $\mathcal{C}_p$ for any $p>2n$.
		
		For $f\in\PP$, by assumption, $Tf=\sum_{i\in\Lambda}h_ig_i$.
		Define
		\[
		S_kf(z)=\sum_{i\in\Lambda}h_i(z)G_i(z),
		\]
		where
		\[
		G_i(z)=c_l^{-1}\int \bar{w_k}g_i(w)K_w^{(l)}(z)(1-|w|^2)^ldv(w),\quad i\in\Lambda.
		\]
		For each $i\in\Lambda$, applying Theorem \ref{thm: key inequality}, we get
		\begin{eqnarray*}
			|h_i(z)G_i(z)|&=&c_l^{-1}\bigg|h_i(z)\int \bar{w_k}g_i(w)K_w^{(l)}(z)(1-|w|^2)^ldv(w)\bigg|\\
			&\leq&Cc_l^{-1}\int_{\bn}\frac{|1-\la z,w\ra|^N}{(1-|w|^2)^{n+N+1}}\int_{D(w,1)}|h_i(\lambda)g_i(\lambda)|dv(\lambda)\frac{(1-|w|^2)^l}{|1-\la z,w\ra|^{n+l+1}}dv(w)\\
			&=&Cc_l^{-1}\int_{\bn}|h_i(\lambda)g_i(\lambda)|\int_{D(\lambda,1)}\frac{1}{(1-|w|^2)^{n+1+N-l}|1-\la z,w\ra|^{n+1+l-N}}dv(w)dv(\lambda)\\
			&\lesssim&C\int_{\bn}|h_i(\lambda)g_i(\lambda)|\frac{(1-|\lambda|^2)^{l-N}}{|1-\la z,\lambda\ra|^{n+1+l-N}}dv(\lambda).
		\end{eqnarray*}
		By Lemma \ref{lem: norm estimates for integral kernels} and our assumption, $h_iG_i$ belongs to $\ber(\bn)$. By \cite[Proposition 5.5]{DGW}, $h_iG_i$ is in the principal submodule generated by $h_i$, which is contained in $\PP$. Moreover,
		\[
		\sum_{i\in\Lambda}|h_i(z)G_i(z)|\lesssim C\int_{\bn}\bigg(\sum_{i\in\Lambda}|h_i(\lambda)g_i(\lambda)|\bigg)\frac{(1-|\lambda|^2)^{l-N}}{|1-\la z,\lambda\ra|^{n+1+l-N}}dv(\lambda).
		\]
		By condition (2) in Definition \ref{defn: ASD}, the series $\sum_{i\in\Lambda}h_iG_i$ converges weakly. Therefore $S_kf=\sum_{i\in\Lambda}h_iG_i\in\PP$.
		
		Next, we show that $QM_{z_k}^{(l)*}P$ is in $\mathcal{C}_p$ for any $p>2n$. Once this is done, we will have $QM_{z_k}^*P=Q(M_{z_k}^*-M_{z_k}^{(l)*})P+QM_{z_k}^{(l)*}P\in\mathcal{C}_p$, which is exactly what we need.
		
		For any $f\in\PP$,
		\begin{eqnarray*}
			&&|M_{z_k}^{(l)*}Tf(z)-S_kf(z)|\\
			&\leq&c_l^{-1}\sum_{i\in\Lambda}\bigg|\int\bar{w_k}h_i(w)g_i(w)K_w^{(l)}(z)(1-|w|^2)^ldv(w)\\
			&&-\int\bar{w_k}h_i(z)g_i(w)K_w^{(l)}(z)(1-|w|^2)^ldv(w)\bigg|\\
			&=&c_l^{-1}\sum_{i\in\Lambda}\bigg|\int(\bar{w_k}-\bar{z_k})h_i(w)g_i(w)K_w^{(l)}(z)(1-|w|^2)^ldv(w)\\
			&&-\int(\bar{w_k}-\bar{z_k})h_i(z)g_i(w)K_w^{(l)}(z)(1-|w|^2)^ldv(w)\bigg|\\
			&\lesssim&\sum_{i\in\Lambda}\int|h_i(w)g_i(w)|\frac{(1-|w|^2)^l}{|1-\la z,w\ra|^{n+1/2+l}}dv(w)\\
			&&+\sum_{i\in\Lambda}\int|h_i(z)g_i(w)|\frac{(1-|w|^2)^l}{|1-\la z,w\ra|^{n+1/2+l}}dv(w).
		\end{eqnarray*}
		Again,
		\begin{eqnarray*}%!raise N to N+1
			&&\int|h_i(z)g_i(w)|\frac{(1-|w|^2)^l}{|1-\la z,w\ra|^{n+1/2+l}}dv(w)\\
			&\lesssim&\int\frac{|1-\la z,w\ra|^N}{(1-|w|^2)^{n+1+N}}\int_{D(w,1)}|h_i(\lambda)g_i(\lambda)|dv(\lambda)\frac{(1-|w|^2)^l}{|1-\la z,w\ra|^{n+1/2+l}}dv(w)\\
			&\lesssim&\int|h_i(\lambda)g_i(\lambda)|\frac{(1-|\lambda|^2)^{l-N}}{|1-\la z,\lambda\ra|^{n+1/2+l-N}}dv(\lambda).
		\end{eqnarray*}
		Thus
		\[
		|M_{z_k}^{(l)*}Tf(z)-S_kf(z)|\lesssim\int\bigg(\sum_{i\in\Lambda}|h_i(w)g_i(w)|\bigg)\frac{(1-|w|^2)^{l-N}}{|1-\la z,w\ra|^{n+1/2+l-N}}dv(w).
		\]
		Applying Lemma \ref{lem: norm estimates for integral kernels} (2) on the right-hand side, we obtain
		\[
		\|QM_{z_k}^{(l)*}Tf\|^2\leq\|M_{z_k}^{(l)*}Tf-S_k f\|^2\lesssim\int\bigg(\sum_{i\in\Lambda}|h_i(w)g_i(w)|\bigg)^2(1-|w|^2)dv(w)\lesssim\|f\|_{L_{a,1}^2}^2.
		\]
		By Lemma \ref{lem: sob emb thm for weighted space}, $QM_{z_k}^{(l)*}TP$ is in $\mathcal{C}_p$ for any $p>2n$. Since $T$ is invertible on $\PP$, we have $QM_{z_k}^{(l)*}P=QM_{z_k}^{(l)*}TPT^{-1}P\in\mathcal{C}_p$, $\forall p>2n$.
		  Thus $QM_{z_k}^*P$ is in $\mathcal{C}_p$ for any $p>2n$. By Lemma \ref{lem: Arveson's lem}, $\PP$ is $p$-essentially normal for all $p>n$. This completes the proof.
	\end{proof}

	\begin{rem}\label{rem: examples of ASD}
		As indicated in the title, the aim of this paper is to find a unified proof that works for most known results of the Arveson-Douglas Conjecture. First, suppose $h$ is a holomorphic function defined in a neighborhood of $\overline{\bn}$ and $\PP_h$ is the principal submodule generated by $h$. Then $\PP_h$ has the asymptotic stable division property trivially. Second, Theorem \ref{thm: ASD to EN} generalizes Shalit's result in that we do not require the generators to be polynomials and we do not require the submodule to be graded. In fact, by Theorem \ref{thm: key inequality} and Theorem \ref{thm: ASD to EN}, any finite set of generators $\{h_i\}_{i=1}^k$, defined in a neighborhood of $\overline{\bn}$, satisfying inequality \eqref{eqn: SD} for relevant norms, generate an essentially normal submodule. Finally, we will show in Theorem \ref{thm: Smooth implies ASD}
		that most of the submodules in \cite{DTY}\cite{DWY1}\cite{EE} have the asymptotic stable division property. 
	\end{rem}
	
	Before proving Theorem \ref{thm: Smooth implies ASD},
	let us discuss the matter with some generality. Consider the following technical hypotheses.
	
	\medskip
	
	\noindent\textbf{Hypothesis 1}: Suppose $I\subset\poly$ is an ideal, $N$, $M$ are positive integers, $C>0$. For any $\epsilon>0$ sufficiently small, there exists $R_0>0$ with the following property. For $R>R_0$, there exist constants $0<\delta<1$, $C'>0$, open covers $\{E_i\}_{i\in\Lambda}$, $\{F_i\}_{i\in\Lambda}$, finite or countably infinite, of $\Bd:=\{z\in\bn:~|z|>\delta\}$, and for each $i\in\Lambda$, a subset $\{\pij\}_{j\in\Gamma_i}\subseteq I$, finite or countably infinite, such that the following hold.
	\begin{itemize}
		\item[(1)] $E_i\subset F_i\subset\bn$.
		\item[(2)] For $k\in\mathbb{N}$, denote
		\[
		E_{ik}=\{w\in\bn: \beta(w, E_i)<kR\}.
		\]
		Then $E_{i3}\subset F_i$. Moreover, any $z\in\bn$ belongs to at most $M$ of the sets $\{F_i\}_{i\in\Lambda}$.
		\item[(3)] For any $i\in\Lambda, j\in\Gamma_i$, $\deg\pij<N$.
		\item[(4)] For any $i\in\Lambda$ and any $f\in \PP_I$, there exist $\{\fij\}_{j\in\Gamma_i}\subseteq\mathrm{Hol}(E_{i3})$ such that 
		\begin{itemize}
			\item[(i)] 
			\[
			\int_{E_{i3}}|\sum_{j\in\Gamma_i} \pij(\lambda)\fij(\lambda)-f(\lambda)|^2dv(\lambda)\leq\epsilon\int_{F_i}|f(\lambda)|^2dv(\lambda).
			\]
			\item[(ii)]
			\[
			\int_{E_{i3}}\bigg(\sum_{j\in\Gamma_i} |\pij(\lambda)\fij(\lambda)|\bigg)^2dv(\lambda)\leq C\int_{F_i}|f(\lambda)|^2dv(\lambda).
			\]
			\item[(iii)]
			\begin{eqnarray*}
				&&\int_{E_{i3}}\bigg(\sum_{j\in\Gamma_i} |\pij(\lambda)\fij(\lambda)|\bigg)^2(1-|\lambda|^2)dv(\lambda)\\
				&\leq&C'\int_{F_i}|f(\lambda)|^2(1-|\lambda|^2)dv(\lambda).
			\end{eqnarray*}
			\item[(iv)] The maps $f\mapsto \fij$ are linear.
		\end{itemize}
	\end{itemize}
	
	\begin{thm}\label{thm: ASD}
		Under Hypothesis 1, $\PP_I$ has the asymptotic stable division property with generating functions consisting of polynomials of bounded degrees.
	\end{thm}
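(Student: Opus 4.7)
The plan is to convert the local decompositions supplied by Hypothesis~1 into a global one of the form $Tf=\sum_{i,j}\pij\gij$ by smoothing against a weighted Bergman kernel. Fix $\epsilon>0$ sufficiently small and $R>R_0$; Hypothesis~1 then produces $\delta\in(0,1)$, the covers $\{E_i\},\{F_i\}$, the polynomials $\{\pij\}$ of degree at most $N$, and the linear map $f\mapsto\{\fij\}$. Fix an integer $l\geq N+1$, write $K_w^{(l)}(z)=(1-\la z,w\ra)^{-(n+1+l)}$ and $c_l=\int_\bn(1-|w|^2)^l\,dv(w)$, and pick a disjoint refinement $\{\E_i\}$ with $\E_i\subset E_i$ and $\bigsqcup_i\E_i=\Bd$. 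For $f\in\PP_I$ define
\[
\gij(z)=c_l^{-1}\int_{\E_i}\fij(w)K_w^{(l)}(z)(1-|w|^2)^l\,dv(w),\qquad Tf(z)=\sum_{i,j}\pij(z)\gij(z).
\]
Each $\gij$ is holomorphic on $\bn$, and each product $\pij\gij$ lies in the principal submodule generated by $\pij\in I$ and hence in $\PP_I$; linearity of $T$ is inherited from condition~(iv) of Hypothesis~1.

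To verify conditions~(2) and (3) of Definition~\ref{defn: ASD}, apply Theorem~\ref{thm: key inequality for polynomial} to $(\pij,\fij)$; using $D(w,1)\subset E_{i3}$ for $w\in\E_i$ when $R\geq 1$ yields the pointwise bound
\[
|\pij(z)\gij(z)|\lesssim\int_{E_{i3}}|\pij(\lambda)\fij(\lambda)|\,\frac{(1-|\lambda|^2)^{l-N}}{|1-\la z,\lambda\ra|^{n+1+l-N}}\,dv(\lambda).
\]
Summing in $j$ and then in $i$, invoking the bounded overlap $E_{i3}\subset F_i$, Cauchy--Schwarz, and the $L^2$-boundedness of the integral operator $T_{l-N}$ from Lemma~\ref{lem: norm estimates for integral kernels}, reduces condition~(2) to Hypothesis~1~(ii); the same argument with the $L^2(\bn,(1-|z|^2)dv)$-boundedness of $T_{l-N}$ (valid since $l-N\geq 1$) together with Hypothesis~1~(iii) gives condition~(3).

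The central difficulty is to prove that $T$ is invertible on $\PP_I$. The strategy is to compare $Tf$ with
\[
\tilde Tf(z)=c_l^{-1}\int_{\Bd}\Big(\sum_i\chi_{\E_i}(w)\sum_j\pij(w)\fij(w)\Big)K_w^{(l)}(z)(1-|w|^2)^l\,dv(w),
\]
which differs from $Tf$ only in having $\pij(w)$ rather than $\pij(z)$. By Hypothesis~1~(i) and bounded overlap, $\|\tilde Tf-(I-A)f\|\leq C\sqrt{\epsilon}\,\|f\|$, where $Af(z):=c_l^{-1}\int_{\{|w|\leq\delta\}}f(w)K_w^{(l)}(z)(1-|w|^2)^l\,dv(w)$ is Hilbert--Schmidt since its kernel is uniformly bounded on $\{|w|\leq\delta\}\times\bn$. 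For the replacement error $T-\tilde T$, Taylor-expand $\pij(z)-\pij(w)=\sum_{1\leq|\alpha|\leq N}(z-w)^\alpha\partial^\alpha\pij(w)/\alpha!$, use the estimate $|z-w|\lesssim|1-\la z,w\ra|^{1/2}$ to extract a half-power factor in each term's denominator, and bound $|\partial^\alpha\pij(w)\fij(w)|$ by a Cauchy-type derivative estimate combined with Theorem~\ref{thm: key inequality for polynomial}; Lemma~\ref{lem: norm estimates for integral kernels}~(2) together with Hypothesis~1~(iii) then yields $\|(T-\tilde T)f\|\leq C\|f\|_{L_{a,1}^2}$, so $T-\tilde T$ is compact by Lemma~\ref{lem: sob emb thm for weighted space}. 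To neutralize the $-A$ term, augment $Tf$ by contributions from a finite, linear-in-$f$ decomposition $f=\sum_k q_k h_k^0$ on the Stein compact set $\{|z|\leq\delta\}$, which exists because $I$ is finitely generated and coherent (Cartan's Theorem~B plus an open-mapping argument), processed through the same Bergman-integral recipe. The augmented $T$ then satisfies $T=I+\text{(compact)}+\text{(small in norm)}$ on $\PP_I$, hence is Fredholm of index zero. The hardest step is to upgrade this Fredholm structure to actual invertibility: for $\epsilon$ small enough, Fredholm theory combined with an injectivity check accomplishes this, where injectivity is derived by tracing through the construction to show that $Tf\equiv0$ together with Hypothesis~1~(i) forces each $\fij$ to vanish on $\E_i$, whence $f\equiv0$ via the weighted reproducing formula.
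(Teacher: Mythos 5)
Your construction of $T$ by integrating $\fij$ against the weighted kernel on local pieces, then verifying the two integral bounds of Definition~\ref{defn: ASD}, follows the same basic outline as the paper. But there is a genuine gap at the invertibility step, and the paper's treatment of the $\pij(z)$-versus-$\pij(w)$ error is structurally different from yours.

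The paper never compares $\pij(z)$ with $\pij(w)$ via Taylor expansion. Instead it introduces a Lipschitz cutoff $\varphi_i(w)$ (with $|\varphi_i(z)-\varphi_i(w)|\leq\beta(z,w)/R$), keeps $\pij(z)$ \emph{outside} the integral, and splits the estimate three ways: a tail estimate off $D(z,r)$, the replacement $\varphi_i(w)\to\varphi_i(z)$ costing a factor $r/R$, and the exact reproducing identity $\int_{D(z,r)}\fij(w)K_w^{(l)}(z)(1-|w|^2)^l\,dv=c_{r,l}\fij(z)$ from Lemma~\ref{lem: norm estimates for integral kernels}~(3). The payoff is an explicit operator-norm bound
\[
\|\T f-c_{r,l}\,T_{\sum_i\varphi_i}f\|\lesssim\big[(\epsilon_{r-1,l-N}+r/R)(CM)^{1/2}+\epsilon^{1/2}M\big]\|f\|,
\]
with the constant made strictly smaller than $\frac12 c_{r,l}$ by tuning $\epsilon$, $r$, $R$. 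Since $\sum_i\varphi_i\geq\chi_{\Bd}$, the compression of the Toeplitz operator $T_{\sum_i\varphi_i}$ to $\PP_I\ominus I_k$ (with $I_k$ the degree-$\leq k$ polynomials in $I$) is bounded below with explicit constant, so a \emph{small} perturbation of it is still invertible; a block-matrix step then makes $T:=\T(P-P_{I_k})+P_{I_k}$ invertible on all of $\PP_I$. Your scheme, by contrast, only establishes that $T-\tilde T$ and the corrections are \emph{compact}, hence that the augmented $T$ is Fredholm of index zero. That is not enough, and your proposed upgrade to invertibility does not hold: if $Tf\equiv 0$, there is no reason the individual $\fij$ must vanish on $\E_i$ --- the sum $\sum_{i,j}\pij\gij$ can vanish through cancellation across $i,j$ --- and even if all $\fij$ did vanish, the linear map $f\mapsto\{\fij\}$ coming from Hypothesis~1 is not assumed injective, so you could not conclude $f\equiv 0$. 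The smallness of the perturbation, not merely its compactness, is the load-bearing point, and it is what your sharp-cutoff $\chi_{\E_i}$ plus Taylor expansion cannot deliver without much more work: the crude bound $|z-w|\lesssim|1-\la z,w\ra|^{1/2}$ only yields a half-power gain per derivative, whereas the paper's rotated-basis argument in Theorem~\ref{thm: key inequality for polynomial} carefully pairs radial and tangential directions to obtain the full $|1-\la z,w\ra|^{N}/(1-|w|^2)^{N+n+1}$; this discrepancy is why the paper does not attempt the comparison you propose.

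A secondary gap: your ``augmentation'' of $T$ by a finite decomposition on the Stein compact $\{|z|\leq\delta\}$ via Cartan's Theorem~B is sketched rather than proved, and it is unnecessary --- the paper simply adds the finite-rank projection $P_{I_k}$ onto the degree-$\leq k$ polynomials in $I$, and takes an orthonormal basis of $I_k$ as additional generators, which automatically have bounded degree.
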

	
	Combining Theorem  \ref{thm: ASD to EN} and Theorem \ref{thm: ASD}, we have the following corollary.
	\begin{coro}\label{cor: hypo1 to EN}
		Assume Hypothesis 1. Then the submodule $\PP_I$ is $p$-essentially normal for all $p>n$.
	\end{coro}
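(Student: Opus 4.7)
The plan is to patch the local decompositions from Hypothesis 1 into a global one of the form $Tf=\sum_{(i,j)}p_{ij}g_{ij}$, where $T$ is invertible on $\PP_I$ and each $g_{ij}$ is globally holomorphic. The generating functions are exactly the polynomials $\{p_{ij}\}$, which by condition (3) all have degrees strictly less than $N$; this gives the bounded-degree conclusion for free.

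Fix a smooth partition of unity $\{\chi_i\}_{i\in\Lambda}$ on $\Bd$ subordinate to $\{E_i\}$ with uniformly bounded derivatives (possible by the finite overlap (2)). For $f\in\PP_I$ with local factorizations $f_{ij}\in\mathrm{Hol}(E_{i3})$ from Hypothesis 1, and for a large positive integer $l$, define
\[
g_{ij}(z)=c_l^{-1}\int_{E_{i3}}\chi_i(w)f_{ij}(w)K_w^{(l)}(z)(1-|w|^2)^l\,dv(w).
\]
Each $g_{ij}$ is holomorphic on all of $\bn$ because the integrand is compactly supported, and $f\mapsto g_{ij}$ is linear by (iv). Set $Tf:=\sum_{(i,j)}p_{ij}g_{ij}$. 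Moving $p_{ij}(z)$ inside the integral and writing $p_{ij}(z)=p_{ij}(w)+(p_{ij}(z)-p_{ij}(w))$ rewrites this as
\[
Tf(z)=c_l^{-1}\int_{\bn}\Big(\sum_i\chi_i(w)\Big)f(w)K_w^{(l)}(z)(1-|w|^2)^l\,dv(w)+E_1(z)+E_2(z),
\]
where $E_1$ absorbs the $\epsilon$-errors from (i) and $E_2$ collects the commutators $p_{ij}(z)-p_{ij}(w)$. Since $\sum_i\chi_i\equiv1$ on $\Bd$, the main term equals $f(z)-Sf(z)$, with $S$ a smoothing operator whose kernel is supported on the compact set $\bn\setminus\Bd$.

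The ASD bounds (2) and (3) then follow by applying Theorem \ref{thm: key inequality for polynomial} to each $|p_{ij}(z)g_{ij}(z)|$, which dominates it by an integral of $|p_{ij}(\lambda)f_{ij}(\lambda)|\chi_i(\lambda)$ against a Bergman kernel of the form $(1-|\lambda|^2)^{l-N}/|1-\la z,\lambda\ra|^{n+1+l-N}$. Summing over $(i,j)$ and using the bounded overlap (2) to control $\sum_i\chi_i$, Schur's test (Lemma \ref{lem: Schur's test}) on this kernel combined with conditions (ii) and (iii) of Hypothesis 1 yields the required bounds by $\|f\|_{\ber}^2$ and $\|f\|_{L_{a,1}^2}^2$ respectively.

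The main obstacle will be proving that $T$ is invertible on $\PP_I$. For the commutator error $E_2$, the strategy is to Taylor-expand $p_{ij}(z)$ around $w$ to degree $N$, use $|z-w|\lesssim|1-\la z,w\ra|/(1-|w|^2)^{1/2}$ inside $D(w,1)$, and apply Schur's test to obtain $\|E_2\|=O(e^{-cR})$ for some $c=c(N)>0$, which is small once $R$ is large. The smoothing $S$ is compact but not automatically small in norm; it must be absorbed, either by adjoining finitely many extra polynomial generators of $I$ (finitely generated by Hilbert's basis theorem) that handle functions concentrated near the origin, or by a Fredholm-type perturbation argument on $\PP_I$. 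Once $\epsilon$ is small, $R$ is large, and the core correction is in place, $T$ differs from the identity on $\PP_I$ by an operator of norm $<1$, hence is invertible by a Neumann series, completing the verification of the asymptotic stable division property. The bounded polynomial degree $N$ is precisely the quantitative hypothesis making $E_2$ absorbable — without it the Taylor commutator would overwhelm the kernel and the whole scheme would fail.
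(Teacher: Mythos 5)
The corollary in the paper is proved in one line by combining Theorem~\ref{thm: ASD} (Hypothesis 1 implies the asymptotic stable division property) with Theorem~\ref{thm: ASD to EN}. You are effectively re-deriving Theorem~\ref{thm: ASD}, which is a reasonable thing to attempt, but your scheme contains a genuine gap at the estimate of the commutator error $E_2$. The claim $\|E_2\|=O(e^{-cR})$ cannot be right. If you Taylor-expand $p_{ij}(z)$ around $w$ and drop the zeroth-order term, the improvement over the full bound of Theorem~\ref{thm: key inequality for polynomial} is a factor of the form $\big(\tfrac{|1-\la z,w\ra|}{1-|w|^2}\big)^{|\alpha|}$ with $|\alpha|\geq 1$; by Lemma~\ref{basic computations}(1) this factor is bounded \emph{below} by $2^{-N}$ for all $z,w\in\bn$, so near the diagonal the commutator kernel is comparable to the main kernel, and after the Rudin--Forelli/Schur estimates of Lemma~\ref{lem: norm estimates for integral kernels} the operator norm of $E_2$ stays $\gtrsim 1$ uniformly in $R$. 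Worse, increasing $R$ only enlarges the sets $E_i$ (their Bergman radius is proportional to $s\approx R/\log 2$), so a pointwise commutator bound over $E_i$ can only deteriorate. With $E_2$ of norm $\sim 1$, the Neumann-series argument for the invertibility of $T$ collapses and the asymptotic stable division property is not established.

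The paper's proof of Theorem~\ref{thm: ASD} sidesteps the commutator entirely: it keeps $p_{ij}(z)$ outside the integral (controlling $|p_{ij}(z)f_{ij}(w)|$ by Theorem~\ref{thm: key inequality for polynomial}), splits the $w$-integral into $D(z,r)$ and its complement, uses the exact reproducing identity $T_l^r f=c_{r,l}f$ of Lemma~\ref{lem: norm estimates for integral kernels}(3) on the near part, and replaces $\varphi_i(w)$ by $\varphi_i(z)$ at cost $r/R$ using the Bergman-metric Lipschitz bound $|\varphi_i(z)-\varphi_i(w)|\leq\beta(z,w)/R$ of the bump functions $\varphi_i(z)=\max(1-\beta(z,E_i)/R,\,0)$. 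The three errors $\epsilon_{r-1,l-N}$, $r/R$, and $\epsilon^{1/2}$ can then be made small independently, which is what lets the Toeplitz operator $T_{\sum_i\varphi_i}$ be inverted on $\PP_I\ominus I_k$. Relatedly, a partition of unity subordinate to $\{E_i\}$ with ``uniformly bounded derivatives'' does not exist: the $E_i$ shrink in Euclidean size near $\partial\bn$, so Euclidean gradients must blow up. What one does have is a family whose Bergman-Lipschitz constant is $1/R$, and it is precisely that $1/R$ which supplies the smallness the argument needs --- a quantity your $p_{ij}(z)\mapsto p_{ij}(w)$ commutator has no analogue of.
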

	
	\begin{proof}[\textbf{Proof of Theorem \ref{thm: ASD}}]
		Let $I$, $N, M, C$ be as in Hypothesis 1. 
		Let $\epsilon>0$, $R>r>0$ be determined later. We will always assume that $R>R_0$, where $R_0$ is determined by $\epsilon$, as in Hypothesis 1. For $i\in\Lambda$, define
		\[
		\varphi_i(z)=\begin{cases}
		1-\frac{\beta(z, E_i)}{R}, &\beta(z, E_i)\leq R\\
		0, &\text{ otherwise.}
		\end{cases}
		\]
		It is easy to see that 
		\begin{itemize}
			\item[(1)]$0\leq\varphi_i\leq 1$.
			\item[(2)]$\{z: \varphi_i(z)\neq0\}= E_{i1}$.
			\item[(3)]$|\varphi_i(z)-\varphi_i(w)|\leq\frac{\beta(z,w)}{R}$.
		\end{itemize}
		
		Fix a positive integer $l>N$. Define the linear operator
		\[
		\T: I\to\PP_I,~~~f\mapsto \T f(z)=\sum_{i\in\Lambda,j\in\Gamma_i}\pij(z)\gij(z),
		\]
		where 
		\[
		\gij(z)=\int_{\bn}\varphi_i(w)\fij(w)K_w^{(l)}(z)(1-|w|^2)^ldv(w).
		\]
		First, we show that $\T$ is well-defined and extends to a bounded operator on $\PP_I$. For each pair $i, j$ and any $z\in\bn$,
		\begin{eqnarray}\label{ineq: pij(z)gij(z)}
		&&|\pij(z)\gij(z)|\nonumber\\
		&\leq&|\pij(z)|\int\varphi_i(w)|\fij(w)|\frac{(1-|w|^2)^l}{|1-\la z,w\ra|^{n+1+l}}dv(w) \nonumber\\
		&\lesssim&\int_{E_{i1}}\frac{|1-\la z,w\ra|^N}{(1-|w|^2)^{n+1+N}}\int_{D(w,1)}|\pij(\lambda)\fij(\lambda)|dv(\lambda)\frac{(1-|w|^2)^l}{|1-\la z,w\ra|^{n+1+l}}dv(w)\nonumber\\
		&\lesssim&\int_{E_{i2}}|\pij(\lambda)\fij(\lambda)|\frac{(1-|\lambda|^2)^{l-N}}{|1-\la z,\lambda\ra|^{n+1+l-N}}dv(\lambda).
		\end{eqnarray}
		Here the second inequality comes from Theorem \ref{thm: key inequality for polynomial}.
		Hence
		\[
		\|\pij\gij\|^2\lesssim \int_{E_{i2}}|\pij\fij|^2dv\leq C\|f\|^2.
		\]
		Therefore the map $f\to\pij\gij$ extends to a bounded linear operator from $\PP_I$ to the principal submodule generated by $\pij$. By \cite[Proposition 5.5]{DGW}, the image for any $f\in\PP_I$ equals $\pij\gij$ for some $\gij\in\mathrm{Hol}(\bn)$.
		
		Next, we show that $\T$ itself extends to a bounded linear operator. From inequality (\ref{ineq: pij(z)gij(z)}), we see that
		\begin{eqnarray*}
			\sum_{i,j}|\pij(z)\gij(z)|&\lesssim&\sum_{i,j}\int_{E_{i2}}|\pij(\lambda)\fij(\lambda)|\frac{(1-|\lambda|^2)^{l-N}}{|1-\la z,\lambda\ra|^{n+1+l-N}}dv(\lambda)\\
			&=&\int_{\bn}\bigg(\sum_i\chi_{E_{i2}}(\lambda)\sum_j|\pij(\lambda)\fij(\lambda)|\bigg)\frac{(1-|\lambda|^2)^{l-N}}{|1-\la z,\lambda\ra|^{n+1+l-N}}dv(\lambda).
		\end{eqnarray*}
		By Lemma \ref{lem: norm estimates for integral kernels} (1),
		\begin{eqnarray*}
			\|\T f\|^2&\leq&\|\sum_{i,j}|\pij\gij|\|^2\\
			&\lesssim&\int_{\bn}\bigg(\sum_i\chi_{E_{i2}}(\lambda)\sum_j|\pij(\lambda)\fij(\lambda)|\bigg)^2dv(\lambda).
		\end{eqnarray*}
		By our hypotheses, for each $\lambda$, there are at most $M$ functions $\chi_{E_{i2}}$ with $\chi_{E_{i2}}(\lambda)\neq0$. By Lemma \ref{sum and square},
		\[
		\bigg(\sum_i\chi_{E_{i2}}(\lambda)\sum_j|\pij(\lambda)\fij(\lambda)|\bigg)^2\leq M\sum_i\chi_{E_{i2}}(\lambda)\bigg(\sum_j|\pij(\lambda)\fij(\lambda)|\bigg)^2.
		\]
		Therefore
		\begin{eqnarray*}
			\|\T f\|^2\leq\|\sum_{i,j}|\pij\gij|\|^2&\lesssim&M\sum_i\int_{E_{i2}}\bigg(\sum_j|\pij(\lambda)\fij(\lambda)|\bigg)^2dv(\lambda)\\
			&\leq&CM\sum_i\int_{F_i}|f(\lambda)|^2dv(\lambda)\\
			&\leq&CM^2\|f\|^2.
		\end{eqnarray*}
		Hence $\T$ defines a bounded operator. 
		
		Arguments similar to those in the proof of Theorem \ref{thm: ASD to EN} show that $\T f\in\PP_I$ for all $f\in I$.
		
		To sum it up, $\T$ extends to a bounded linear operator on $\PP_I$ and for each $f\in\PP_I$, $\T f$ has the form $\sum_{i, j}\pij\gij$. Moreover, 
		\begin{equation}\label{eqn: 3.4.1}
		\|\sum_{i,j}|\pij\gij|\|^2\lesssim CM^2\|f\|^2.
		\end{equation}
		Since $l>N$, by Lemma \ref{lem: norm estimates for integral kernels}, the estimates above will also give us the following:
		\begin{equation}\label{eqn: 3.4.2}
		\int\bigg(\sum_{i,j}|\pij(z)\gij(z)|\bigg)^2(1-|z|^2)dv(z)\lesssim C'M^2\|f\|^2_{L_{a,1}^2}.
		\end{equation}
		
		Next, we want to show that inequality (1) in Definition \ref{defn: ASD} holds. We will obtain this by showing that a finite rank perturbation of $\T$ is invertible on $\PP_I$. For any $z\in\bn$ and the chosen $r<R$,
		\begin{eqnarray*}
			&&\big|\T f(z)-\sum_{i,j}\pij(z)\int_{D(z,r)}\varphi_i(w)\fij(w)K_w^{(l)}(z)(1-|w|^2)^ldv(w)\big|\\
			&\leq&\sum_{i, j}|\pij(z)|\int_{E_{i1}\backslash D(z,r)}|\fij(w)|\frac{(1-|w|^2)^l}{|1-\la z,w\ra|^{n+1+l}}dv(w)\\
			&\lesssim&\sum_{i, j}\int_{E_{i1}\backslash D(z,r)}\frac{|1-\la z,w\ra|^N}{(1-|w|^2)^{n+1+N}}\int_{D(w,1)}|\pij(\lambda)\fij(\lambda)|dv(\lambda)\frac{(1-|w|^2)^l}{|1-\la z,w\ra|^{n+1+l}}dv(w)\\
			&\lesssim&\sum_{i, j}\int_{E_{i2}\backslash D(z,r-1)}|\pij(\lambda)\fij(\lambda)|\frac{(1-|\lambda|^2)^{l-N}}{|1-\la z,\lambda\ra|^{n+1+l-N}}dv(\lambda)\\
			&=&\int_{D(z,r-1)^c}\bigg(\sum_{i,j}\chi_{E_{i2}}(\lambda)|\pij(\lambda)\fij(\lambda)|\bigg)\frac{(1-|\lambda|^2)^{l-N}}{|1-\la z,\lambda\ra|^{n+1+l-N}}dv(\lambda).
		\end{eqnarray*}
		
		Also,
		\begin{eqnarray*}
			&&\big|\sum_{i, j}\pij(z)\int_{D(z,r)}\varphi_i(w)\fij(w)K_w^{(l)}(z)(1-|w|^2)^ldv(w)\\
			&&-\sum_{i, j}\pij(z)\varphi_i(z)\int_{D(z,r)}\fij(w)K_w^{(l)}(z)(1-|w|^2)^ldv(w)\big|\\
			&\leq&\sum_{i, j}|\pij(z)|\int_{E_{i2}}\frac{r}{R}|\fij(w)|\frac{(1-|w|^2)^l}{|1-\la z,w\ra|^{n+1+l}}dv(w)\\
			&\lesssim&\frac{r}{R}\sum_{i, j}\int_{E_{i2}}\frac{|1-\la z,w\ra|^N}{(1-|w|^2)^{n+1+N}}\int_{D(w,1)}|\pij(\lambda)\fij(\lambda)|dv(\lambda)\frac{(1-|w|^2)^l}{|1-\la z,w\ra|^{n+1+l}}dv(w)\\
			&\lesssim&\frac{r}{R}\sum_{i, j}\int_{E_{i3}}|\pij(\lambda)\fij(\lambda)|\frac{(1-|\lambda|^2)^{l-N}}{|1-\la z,\lambda\ra|^{n+1+l-N}}dv(\lambda)\\
			&=&\frac{r}{R}\int\bigg(\sum_{i, j}\chi_{E_{i3}}(\lambda)|\pij(\lambda)\fij(\lambda)|\bigg)\frac{(1-|\lambda|^2)^{l-N}}{|1-\la z,\lambda\ra|^{n+1+l-N}}dv(\lambda).
		\end{eqnarray*}
		Finally, notice  that if $\varphi_i(z)\neq0$, then $z\in E_{i1}$. By Lemma \ref{lem: norm estimates for integral kernels} (3),
		\[
		\sum_{i, j}\pij(z)\varphi_i(z)\int_{D(z,r)}\fij(w)K_w^{(l)}(z)(1-|w|^2)^ldv(w)=c_{r,l}\sum_{i, j}\varphi_i(z)\pij(z)\fij(z).
		\]
		By Lemma \ref{lem: norm estimates for integral kernels} and the previous arguments,
		\[
		\|\T f-c_{r,l}\sum_i\varphi_i\sum_j\pij\fij\|\lesssim (\epsilon_{r-1,l-N}+\frac{r}{R})\|\sum_{i, j}\chi_{E_{i3}}|\pij\fij|\|\lesssim(\epsilon_{r-1,l-N}+\frac{r}{R})(CM)^{1/2}\|f\|.
		\]
		
		On the other hand, by Hypothesis 1,
		\[
		\int_{E_{i3}}|\sum_j\pij(\lambda)\fij(\lambda)-f(\lambda)|^2dv(\lambda)\leq\epsilon\int_{F_i}|f(\lambda)|^2dv(\lambda).
		\]
		So
		\begin{eqnarray*}
			&&\int_{\bn}|\sum_i\varphi_i(\lambda)\sum_j\pij(\lambda)\fij(\lambda)-\sum_i\varphi_i(\lambda)f(\lambda)|^2dv(\lambda)\\
			&\leq&M\sum_i\int_{E_{i1}}|\sum_j\pij(\lambda)\fij(\lambda)-f(\lambda)|^2dv(\lambda)\\
			&\leq&\epsilon M\sum_i\int_{F_i}|f(\lambda)|^2dv(\lambda)\\
			&\leq&\epsilon M^2\|f\|^2.
		\end{eqnarray*}
		Combining the above estimates, we get
		\[
		\|\T f-c_{r,l}\sum_i\varphi_i f\|\lesssim \big[(\epsilon_{r-1,l-N}+\frac{r}{R})(CM)^{1/2}+\epsilon^{1/2}M\big]\|f\|.
		\]
		Let $T_{\sum_i\varphi_i}$ be the Toeplitz operator with symbol $\sum_i\varphi_i$. Then the above implies
		\begin{equation}\label{ineqn: Tf -sum varphi_i f}
		\|\T f-c_{r,l}T_{\sum_i\varphi_i} f\|\lesssim \big[(\epsilon_{r-1,l-N}+\frac{r}{R})(CM)^{1/2}+\epsilon^{1/2}M\big]\|f\|.
		\end{equation}
		
		Since $C, l, N$ and $M$ are fixed, we can choose $\epsilon$, $r$, and then $R>R_0$ so that 
		\[
		(\epsilon_{r-1,l-N}+\frac{r}{R})(CM)^{1/2}+\epsilon^{1/2}M<1/2c_{r,l}.
		\]
		
		For a positive integer $k$, denote 
		\[
		I_k=\{q\in I, \deg q\leq k\}.
		\]
		Since $\sum\varphi_i\geq\chi_{\mathbb{B}_\delta}$,
		we can choose $k$ large enough so that the compression of $T_{\sum_i\varphi_i}$ on $\PP_I\ominus I_k$ is invertible on $\PP_I\ominus I_k$ and the norm of its inverse is less than 2. Then the compression of $\T$ on $\PP_I\ominus I_k$ is invertible. By a block matrix argument, it is easy to see that $T:=\T(P-P_{I_k})+P_{I_k}$ is invertible on $\PP_I$. Here $P$ denotes the orthogonal projection onto $\PP_I$.   Choose an orthonormal basis $\{h_i\}$ for $I_k$ under the Bergman norm. For any $f\in\PP_I$,
		\[
		Tf=\sum_{i,j}\pij\gij+P_{I_k}f=\sum_{i,j}\pij\gij+\sum_ic_ih_i.
		\]
		Then by inequalities \eqref{eqn: 3.4.1} and \eqref{eqn: 3.4.2},
		\[
		\int\big(\sum_{i,j}|\pij\gij|\big)^2dv\lesssim\|(P-P_{I_k})f\|^2\leq \|f\|^2.
		\]
		
		Since $I_k$ is finitely dimensional, we also have
		\[
		\int\big(\sum_i|c_ih_i|\big)^2dv\lesssim \|\sum_ic_ih_i\|^2\lesssim\|f\|^2.
		\]
		
		It is also easy to see that $P_{I_k}$ extends to a bounded operator on the closure of $I$ in $L_{a,1}^2(\bn)$. Therefore 
		\[
		\int\big(\sum_i|c_ih_i|\big)^2(1-|z|^2)dv\lesssim \|f\|^2_{L_{a,1}^2}.
		\]
		
		and
		\[
		\int\big(\sum_{i,j}|\pij\gij|\big)^2(1-|z|^2)dv\lesssim\|f-P_{I_k}f\|_{L_{a,1}^2}^2\lesssim  \|f\|^2_{L_{a,1}^2}.
		\]
		 This completes the proof.
	\end{proof}

	\section{A Distance Estimate}\label{section: a distance estimate}

	As promised in Remark \ref{rem: examples of ASD}, we are going to show that most submodules in the known results of the Geometric Arveson-Douglas Conjecture has the asymptotic stable division property. In fact, we will prove the following more general result.
	\begin{thm}\label{thm: Smooth implies ASD}
		Suppose $I$ is an ideal in $\poly$ with primary decomposition $I=\cap_{j=1}^k I_j^{m_j}$, where $I_j$ are prime ideals. Assume the following.
		\begin{itemize}
			\item[(1)] For any $j=1,\cdots,k$, $Z(I_j)$ has no singular points on $\partial\bn$ and intersects $\partial\bn$ transversely.
			\item[(2)] Any pair of the varieties $\{Z(I_j)\}$ does not intersect on $\partial\bn$. 
		\end{itemize}
		Then $I$ satisfies Hypothesis 1. Consequently, the submodule $\PP_I$ has the asymptotic stable division property with generating functions being polynomials of uniformly bounded degrees, and $\PP_I$ is $p$-essentially normal for all $p>n$.
	\end{thm}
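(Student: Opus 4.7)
The plan is to verify Hypothesis 1 for the ideal $I = \bigcap_{j=1}^k I_j^{m_j}$; once this is done, Corollary \ref{cor: hypo1 to EN} immediately delivers the essential normality. I would split the argument into a geometric reduction, a covering construction, a local decomposition formula, and a verification of the three estimates (i)--(iii). Condition (2) says that the boundary pieces $X_j := \overline{Z(I_j) \cap \bn} \cap \partial\bn$ are pairwise disjoint compacta in $\partial\bn$; choosing pairwise disjoint neighbourhoods $U_j \supset X_j$ in $\cn$ and $\delta$ sufficiently close to $1$ reduces the problem, on each piece $\bn \cap U_j$, to a single primary ideal, because locally on $U_j$ the ideal $I$ coincides with $I_j^{m_j}$ up to units in the local ring of holomorphic functions.

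Next I would construct, on each $\bn \cap U_j$, a covering $\{E_i\}$, $\{F_i\}$ of bounded overlap in which every $F_i$ is a Bergman ball of fixed radius comparable to $R$ centred at a point $z_i$ approaching $\partial\bn$. Following the radial-spherical decomposition of \cite{Xia18}, I would partition the parameter $-\log(1-|z|^2)$ into intervals of length comparable to $R$, and on each annulus cover the unit sphere by Kor\'anyi-type caps $Q(\zeta, c(1-|z_i|^2)^{1/2})$; such caps have Bergman radius of order $R$, and a Vitali-type argument yields a bounded-overlap constant $M$ independent of both $R$ and the centre.

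The local decomposition is then built from canonical defining functions. On each $F_i$, condition (1) provides an orthogonal decomposition $\cn = \CC^{p_j} \oplus \CC^{n-p_j}$, with $p_j = \dim Z(I_j)$, and a neighbourhood $V \supset F_i$ on which the coordinate projection $\pi \colon V \to \CC^{p_j}$ restricts to a proper analytic cover of $Z(I_j) \cap V$. Definition \ref{defn: CDF} then supplies polynomials $\{\pij\}$ whose degrees are bounded by an integer $N$ depending only on the multiplicities $m_j$ and the geometric degrees of $Z(I_j)$, and in particular uniform in $i$. A Weierstrass-type division in the $n - p_j$ normal variables produces, for any $f \in I_j^{m_j}$ (and by continuity for any $f \in \PP_I$), holomorphic coefficients $\fij$ on $V$ with a sup-norm estimate $\sup_{V'} |\fij| \lesssim \sup_V |f|$ on any relatively compact $V' \Subset V$. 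Coupling this with standard subharmonicity and Bergman-kernel bounds yields estimates (ii) and (iii) with constants uniform in $i$, the uniformity being a direct consequence of $F_i$ being a Bergman ball of fixed radius.

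The hard part is estimate (i), which requires the decomposition to be \emph{asymptotically exact}:
\[
\int_{E_{i3}} \bigg| \sum_{j \in \Gamma_i} \pij \fij - f \bigg|^2 dv \leq \epsilon \int_{F_i} |f|^2 dv,
\]
with $\epsilon$ arbitrarily small once $\delta$ is pushed toward $1$. Locally a Weierstrass division is exact; the inexactness arises because the $\pij$ form a single globally-fixed family of canonical defining functions, whereas the ``true'' division adapted to each $F_i$ would depend on the centre $z_i$. The gain comes from the observation that a fixed-radius Bergman ball near $\partial\bn$ has Euclidean diameter of order $(1-|z_i|^2)^{1/2}$ in spherical directions and $(1-|z_i|^2)$ radially, so any error term that records a positive power of the Euclidean distance to $Z(I_j)$ beyond the order $m_j$ already built into $f \in \PP_I$ contributes a factor tending to $0$ as $|z_i| \to 1$. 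Making this quantitative requires expanding $f$ in a Taylor-type series around its projection onto $Z(I_j)$ in coordinates adapted to $\pij$, using the change-of-basis invariance recorded in Remark \ref{rem: CDF basis} to keep constants uniform as the centres sweep over the compact set $X_j$. Once (i)--(iii) are verified, Hypothesis 1 holds for $I$, and Theorem \ref{thm: ASD} combined with Corollary \ref{cor: hypo1 to EN} closes the proof.
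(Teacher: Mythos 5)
Your high-level architecture matches the paper's: verify Hypothesis~1 by (a) reducing to a single primary component $I_j^{m_j}$ via disjointness on $\partial\bn$, (b) constructing a bounded-overlap covering by $\{E_i\},\{F_i\}$ in the Xia radial--spherical style, (c) building a local decomposition $f\approx\sum_j p_{ij}f_{ij}$ from canonical defining functions with uniformly bounded degree, and (d) checking the three $L^2$ estimates. However, two of the load-bearing choices in your plan are incompatible with the mechanism the paper actually uses, and the ``hard estimate'' (i) is left without a quantitative handle. First, you take the $F_i$ to be Bergman balls; the paper cannot afford this. It instead uses M\"obius images $E_u=\varphi_u(\tilde E_u)$ of sets of the form $\{1-|z|^2>c_1,\,|1-\langle z,u\rangle|>c_2,\,(1-|z|^2)/|1-\langle z,u\rangle|^2<c_3\}$, which are rotation-invariant in the normal directions and, crucially, have the property that the fibers of the coordinate projection $\pi$ over $U'=\pi(F)$ are \emph{spherical shells or balls} in $\CC^{n-d}$ (Lemma~\ref{lem: general decomposition lemma}). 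That shell structure is what makes Lemma~\ref{lem: sphereical shell} applicable and is explicitly flagged in Section~\ref{section: a covering lemma} as the reason the covering cannot be taken to be a generic Vitali family of Kor\'anyi caps. A straight Bergman ball has no comparably nice fiber structure, so the integral identity the paper relies on would not be available.

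Second, and more seriously, your estimate (i) rests on a qualitative ``Weierstrass division is exact locally, inexactness arises globally'' picture with the error controlled by $(1-|z_i|^2)^{1/2}\to 0$; but this is not how the paper obtains the $\epsilon$-smallness, and as stated it does not yield a clean bound of the required form. In the paper the coefficients $f_\alpha$ are \emph{not} obtained by dividing $f$ pointwise: they are defined by integrating $f$ against pieces of the reproducing kernel, after $K_w(z)$ has been algebraically split (Lemma~\ref{lem: xy decomposition}) into a ``remainder'' part $I(z,w)$ and a part $II(z,w)$ carrying the canonical defining functions. The error $\int_F f\cdot I(z,w)\,dv(w)$ is then shown small using that $f$ vanishes to order $m$ on the local piece of the variety together with the $|a(z')|<\delta$ control, and the truncation-to-a-radius-$R$ step contributes the factor $\epsilon_R=\epsilon_{R,0}\to 0$ from Lemma~\ref{lem: norm estimates for integral kernels}(4). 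Note the logical order: $R$ is chosen large so that $C\epsilon_{R}^2<\epsilon$, and only then is $K$ (hence $\delta$ in Hypothesis~1) fixed; smallness comes from $R\to\infty$, not from $\delta\to 1$. You also omit the two-tier covering of $\mathbb{B}_\delta$ (sets $E_u$ centred on $\Z$ where the full decomposition is built, plus sets $E_v'$ in the region $\mathcal O$ far from $\Z$ where a single nonvanishing $p\in I$ suffices), though this is a minor omission. To repair the proposal you would need to (a) replace the ball covering by the $\tilde E_u$-type sets, (b) replace the sup-norm Weierstrass division by the kernel-splitting construction with its built-in $\epsilon_R$ decay, and (c) incorporate the off-variety covering family.
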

	
	We will prove Theorem \ref{thm: Smooth implies ASD} in the remaining sections. 
	
	\medskip
	\noindent\textbf{Notations:} For the remainder of this paper, we reserve the notations $I$, $m_j$ and $I_j$ for the ones mentioned in Theorem \ref{thm: Smooth implies ASD}. Denote $Z=Z(I)$, $Z_j=Z(I_j)$, $\Z=Z\cap\bn$ and $\Z_j=Z_j\cap\bn$.

	As a preparation, we will prove a distance estimate for $Z(I)$ in this section.
	
	\begin{prop}\label{prop: distance estimate}
		For fixed $R>0$, 
		\begin{equation}\label{limit betaH}
		\sup\limits_{w\in D(z,R)\cap Z}\beta(w,T_zZ+z)\to 0,\quad |z|\to 1, z\in\Z.
		\end{equation}
		Here $T_zZ$ is the complex tangent space of $Z$ at $z$, viewed as a linear subspace of $\cn$.
	\end{prop}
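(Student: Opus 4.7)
The plan is to reduce, for $|z|$ close to $1$, to a single smooth branch of $Z$, and then construct by hand an approximating point $a\in(T_zZ+z)\cap\bn$ whose Bergman distance to $w$ tends to $0$ uniformly. By condition (2) the compact sets $Z_i\cap\partial\bn$ are pairwise disjoint, so $\dist(Z_i\cap\partial\bn,Z_j\cap\partial\bn)\ge\delta_0>0$. A Bergman ball $D(z,R)$ has Euclidean diameter $O((1-|z|^2)^{1/2})$, so for $|z|$ sufficiently close to $1$ and $z\in\Z_j$ we have $D(z,R)\cap Z\subset Z_j$. Condition (1) places $\mathrm{Sing}\,Z_j$ a definite Euclidean distance from $\partial\bn$, so for $|z|$ large the ball $D(z,R)$ meets only the smooth locus of $Z_j$, and $T_zZ=T_zZ_j$.

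Place unitary coordinates so $z=(|z|,0,\ldots,0)$. Transversality of $Z_j$ with $\partial\bn$ at $z/|z|$ is equivalent to $T_zZ_j\not\subset\{v:\la v,z\ra=0\}$, so by continuity and compactness one can choose, uniformly in $z$, a unit vector $u\in T_zZ_j$ with $|u_1|\ge c_0>0$, and decompose $T_zZ_j=\CC u\oplus(T_zZ_j\cap\{v_1=0\})$. Parametrize $Z_j$ locally as $v\mapsto z+v+\phi(v)$, with $\phi$ holomorphic, $(T_zZ_j)^\perp$-valued, $\phi(0)=0$, $D\phi(0)=0$, and $|\phi(v)|\le C|v|^2$, all constants uniform. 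For $w=z+v+\phi(v)\in D(z,R)\cap Z_j$, anisotropy of $D(z,R)$ gives $|(w-z)_1|\le C(1-|z|^2)$ and $\sum_{k\ge 2}|w_k|^2\le C(1-|z|^2)$, whence $|v|\le C(1-|z|^2)^{1/2}$ and $|\phi(v)|\le C(1-|z|^2)$; writing $v=s_1u+v^\perp$ and taking first coordinates yields $|s_1|\le C(1-|z|^2)$. Set
\[
a=z+v+\frac{\phi(v)_1}{u_1}\,u\;\in\;T_zZ_j+z,
\]
so that $a_1=w_1$ by construction and $|(w-a)_k|\le C(1-|z|^2)$ for $k\ge 2$.

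Since $a_1=w_1$, a direct expansion yields $1-|a|^2=(1-|w|^2)+\sum_{k\ge 2}(|w_k|^2-|a_k|^2)=(1-|w|^2)+O((1-|z|^2)^{3/2})$, which is positive for $|z|$ close to $1$, so $a\in\bn$. Applying Lemma \ref{basic about varphi} and using $a_1=w_1$ to cancel the leading $(1-|w_1|^2)^2$ in both numerator and denominator of
\[
1-|\varphi_w(a)|^2=\frac{(1-|w|^2)(1-|a|^2)}{|1-\la a,w\ra|^2},
\]
one is left with $|\varphi_w(a)|^2\le \alpha\sum_{k\ge 2}|w_k-a_k|^2/|1-\la a,w\ra|^2\le C(1-|z|^2)$ (here $\alpha=1-|w_1|^2\asymp 1-|z|^2$ and $|1-\la a,w\ra|^2\ge(1-|w|^2)(1-|a|^2)\gtrsim (1-|z|^2)^2$). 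Hence $\beta(w,a)\le C'(1-|z|^2)^{1/2}\to 0$ uniformly in $w\in D(z,R)\cap Z$.

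The main obstacle, and what dictates the construction of $a$, is that the naive Euclidean nearest point $z+v\in T_zZ_j+z$ does \emph{not} work: its residual $\phi(v)$ typically has $e_1$-component of size $(1-|z|^2)$, and the radial Bergman weight $1/(1-|z|^2)^2$ then makes $\beta(w,z+v)$ bounded away from $0$. Transversality is precisely what allows one to absorb the radial residual into a direction $u\in T_zZ_j$, converting the discrepancy into a purely tangential error, whose weaker Bergman weight $1/(1-|z|^2)$ produces the desired $(1-|z|^2)^{1/2}$ decay.
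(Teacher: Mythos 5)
Your proof is correct and follows essentially the same route as the paper's: both observe that the nearest point of $T_zZ+z$ to $w$ is within Euclidean distance $O(1-|z|^2)$ of $w$ (second-order Taylor), then use transversality to shift that nearest point along a tangent vector $u$ (the paper's $v_z$) so as to exactly match the radial component of $w$ (your $a_1=w_1$ is the paper's $\langle\lambda'+z,z\rangle=\langle w,z\rangle$), converting the remaining error into a purely tangential one of size $O(1-|z|^2)$ whose Bergman length is $O((1-|z|^2)^{1/2})$. The only cosmetic difference is that you work in unitary coordinates with $z=(|z|,0,\ldots,0)$ and a graph parametrization $z+v+\phi(v)$ and then estimate $|\varphi_w(a)|$ directly, while the paper phrases the same construction in terms of the projections $P_z,Q_z$ and estimates $|\varphi_z(\lambda'+z)-\varphi_z(w)|$.
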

	
	\begin{proof}
		Since $Z$ intersects $\partial\bn$ transversely, there exists $C_1>0$ such that, for any $z\in Z$ close enough to $\partial\bn$, there exists a non-zero vector $v_z\in T_zZ$ such that 
		\[
		|Q_z(v_z)|\leq C_1|\la v_z,z\ra|.
		\]
		It is also easy to see that there exists $C_2>0$ such that for $r>0$ sufficiently small and any $z\in Z$ close enough to $\partial\bn$, 
		\[
		\sup_{w\in Z\cap B(z,r)}\dist(w, T_zZ+z)\leq C_2 r^2.
		\]
		Here we use ``$\dist$'' to denote the Euclidean distance. Also, since $R>0$ is fixed, there exists $C_3>0$ such that for any $z, w\in\bn$ and $\beta(z,w)<R$,
		\[
		|P_z(w-z)|\leq C_3(1-|z|^2),~~~|w-z|^2\leq C_3(1-|z|^2).
		\]
		For any $z\in Z$ and $w\in D(z,R)\cap Z$, let $\lambda\in T_zZ$ be such that
		\[
		\dist(w, \lambda+z)=\dist(w, T_zZ+z).
		\]
		Then
		\[
		|w-(\lambda+z)|\leq C_2|w-z|^2\leq C_2C_3(1-|z|^2).
		\]
		Let $\lambda'=\lambda-\frac{\la \lambda+z-w,z\ra}{\la v_z,z\ra}v_z$. Then $\lambda'\in T_zZ$. We need to estimate $\beta(\lambda'+z,w)$. 
		Noticing that $\la\lambda'+z,z\ra=\la w,z\ra$, we have
		\[
		|\varphi_z(\lambda'+z)-\varphi_z(w)|=\frac{(1-|z|^2)^{1/2}|Q_z(w-\lambda')|}{|1-\la w,z\ra|}\lesssim\frac{|Q_z(w-\lambda')|}{(1-|z|^2)^{1/2}}.
		\]
		Since
		\begin{eqnarray*}
			|Q_z(w-\lambda')|&\leq&|Q_z(w-\lambda)|+\frac{|\la\lambda+z-w,z\ra|}{|\la v_z,z\ra|}|Q_z(v_z)|\\
			&=&|Q_z(w-(\lambda+z))|+\frac{|\la\lambda+z-w,z\ra|}{|\la v_z,z\ra|}|Q_z(v_z)|\\
			&\leq&(1+C_1)|w-(\lambda+z)|\\
			&\leq&(1+C_1)C_2C_3(1-|z|^2),
		\end{eqnarray*}
	we have
		\[
		|\varphi_z(\lambda'+z)-\varphi_z(w)|\lesssim(1-|z|^2)^{1/2}.
		\]
		Since $w\in D(z,R)$, from the above inequality, for $z$ close enough to $\partial\bn$, $\varphi_z(w), \varphi_z(\lambda'+z)$ fall into a compact subset of $\bn$. Hence
		\[
		\beta(\lambda'+z, w)=\beta(\varphi_z(\lambda'+z),\varphi_z(w))\approx|\varphi_z(\lambda'+z)-\varphi_z(w)|\lesssim(1-|z|^2)^{1/2}.
		\]
		This completes the proof.
	\end{proof}
	
	It is convenient to consider the following modified version of tangent space, because it is invariant under the M\"{o}bius transform $\varphi_z$.
	\begin{defn}
		For $z\in\Z$, let us define the \emph{normal tangent cone} $NT_zZ$ to be 
		\[
		NT_zZ=T_zZ\cap z^\perp+\CC z.
		\]
	\end{defn}
	Note that $z\in NT_zZ$.
	\begin{lem}\label{lem: NT close to T}
		For any $R>0$, we have
		\[
		\sup_{w\in (T_zZ+z)\cap D(z,R)}\beta(w, NT_zZ)\to0,\quad z\in\Z, |z|\to1.
		\]
	\end{lem}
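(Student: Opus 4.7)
The plan is to mimic the proof of Proposition~\ref{prop: distance estimate}: for $w=z+v\in(T_zZ+z)\cap D(z,R)$ with $v\in T_zZ$, I construct a point $w''\in NT_zZ$ whose $z$-component agrees with that of $w$ and estimate $|\varphi_z(w)-\varphi_z(w'')|$ directly. By the transversality of $Z$ with $\partial\bn$, for $z\in\Z$ near $\partial\bn$ there is a unit vector $v_z\in T_zZ$ satisfying $|Q_z(v_z)|\leq C_1|\la v_z,z\ra|$; combining this with $|v_z|=1$ yields a uniform lower bound $|\la v_z,z\ra|\geq c>0$. In particular $v_z\notin z^\perp$, so $T_zZ=\CC v_z\oplus(T_zZ\cap z^\perp)$, and I decompose $v=\gamma v_z+u$ with $u\in T_zZ\cap z^\perp$ and $\gamma=\la v,z\ra/\la v_z,z\ra$. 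Since $w\in D(z,R)$, the bound $|P_z(w-z)|\leq C_3(1-|z|^2)$ used in Proposition~\ref{prop: distance estimate} yields $|\la v,z\ra|\lesssim 1-|z|^2$, and consequently $|\gamma|\lesssim 1-|z|^2$.

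Set $w'':=(1+\la v,z\ra/|z|^2)z+u$; the first summand lies in $\CC z$ and the second in $T_zZ\cap z^\perp$, so $w''\in NT_zZ$. By construction $\la w'',z\ra=|z|^2+\la v,z\ra=\la w,z\ra$, which makes the denominators in the defining formulas for $\varphi_z(w)$ and $\varphi_z(w'')$ coincide. A short computation using $Q_z(v)-u=\gamma Q_z(v_z)$ then gives
\[
\varphi_z(w)-\varphi_z(w'')=-\frac{(1-|z|^2)^{1/2}\,\gamma Q_z(v_z)}{1-\la w,z\ra}.
\]
Combining the transversality bound $|\gamma Q_z(v_z)|\leq C_1|\la v,z\ra|\lesssim 1-|z|^2$ with Lemma~\ref{basic computations}(1), namely $|1-\la w,z\ra|>(1-|z|^2)/2$, yields $|\varphi_z(w)-\varphi_z(w'')|\lesssim(1-|z|^2)^{1/2}\to 0$, uniformly in $w$.

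It remains to promote this Euclidean estimate to a Bergman estimate and to verify $w''\in\bn$. Since $\varphi_z(w)$ lies in the compact subset $\overline{D(0,R)}$ of $\bn$, for $|z|$ close enough to $1$ the estimate above forces $|\varphi_z(w'')|<1$, so $\varphi_z(w'')\in\bn$; since $\varphi_z$ is an involutive biholomorphism of $\bn$, this yields $w''=\varphi_z(\varphi_z(w''))\in\bn$. Both $\varphi_z(w)$ and $\varphi_z(w'')$ then lie in a fixed compact subset of $\bn$, where the Bergman metric is comparable to the Euclidean metric, so $\beta(w,w'')\lesssim(1-|z|^2)^{1/2}\to 0$ uniformly in $w$, giving the lemma. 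The principal subtlety is ensuring $w''\in\bn$; the argument above sidesteps any rescaling by exploiting the M\"obius-invariance of $\bn$ under $\varphi_z$.
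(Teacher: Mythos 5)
Your proof is correct and follows the same route as the paper's: you fix the $z$-component so that the denominators of $\varphi_z(w)$ and $\varphi_z(w'')$ coincide, push the remaining discrepancy into the $Q_z$-direction along a tangent vector whose $z$-component is bounded below by transversality, and bound the Euclidean gap by $(1-|z|^2)^{1/2}$ using $|P_z(w-z)|\lesssim 1-|z|^2$ and Lemma~\ref{basic computations}(1). The only cosmetic difference is the choice of correcting vector — you use a unit $v_z\in T_zZ$ with $|Q_z(v_z)|\leq C_1|\la v_z,z\ra|$, while the paper uses $z_0=P_{T_zZ}z$ with $|z_0|\geq\epsilon$ — and you spell out the step showing $w''\in\bn$ and passing from Euclidean to Bergman distance, which the paper leaves implicit (it is made explicit only in the analogous Proposition~\ref{prop: distance estimate}).
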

	\begin{proof}
		For $z\in\Z$ close enough to $\partial\bn$, let $z_0=P_{T_zZ}z$. Since $\Z$ intersects $\partial\bn$ transversely, we can assume that $|z_0|\geq\epsilon$ for some $0<\epsilon<1$. 
		
		Suppose $w\in T_zZ$ and $\beta(w+z,z)<R$. Let $\lambda=P_zw+w-\la w,z\ra\frac{z_0}{|z_0|^2}$. Then $\lambda\in NT_zZ$ and $\la w,z\ra=\la\lambda,z\ra$. So $\varphi_z(\lambda +z)$ is well defined. Therefore
		\[
		|\varphi_z(z+w)-\varphi_z(z+\lambda)|=\bigg|\frac{(1-|z|^2)^{1/2}\frac{\la w,z\ra}{|z_0|^2}Q_z(z_0)}{1-\la z+w, z\ra}\bigg|\lesssim\frac{\frac{1-|z|^2}{|z_0|}}{(1-|z|^2)^{1/2}}\leq\epsilon^{-1}(1-|z|^2)^{1/2}\to0,
		\]
		where the first inequality follows from the fact that $|\la w,z\ra|=|\la w+z-z, z\ra|\lesssim 1-|z|^2$.
		This completes the prove.
	\end{proof}
	Combining Proposition \ref{prop: distance estimate} Lemma \ref{lem: NT close to T}, we have the following lemma.
	\begin{lem}\label{lem: Z close to NT}
		For any $R>0$, we have
		\[
		\sup_{w\in Z\cap D(z,R)}\beta(w, NT_zZ)\to0,\quad z\in\Z, |z|\to1.
		\]
	\end{lem}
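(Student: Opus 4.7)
The plan is to deduce the lemma from Proposition \ref{prop: distance estimate} and Lemma \ref{lem: NT close to T} by a two-step triangle inequality in the Bergman metric $\beta$. Fix $R>0$. First I would invoke Proposition \ref{prop: distance estimate} at radius $R$: there is a quantity $\eta_1(z)$ with $\eta_1(z)\to 0$ as $|z|\to 1$, $z\in\Z$, such that for every $w\in Z\cap D(z,R)$ one can choose a point $u\in T_zZ+z$ with $\beta(w,u)\le \eta_1(z)$. (The distance $\beta(w,T_zZ+z)$ is an infimum that need not be attained, but any vanishing slack such as $(1-|z|^2)$ may be absorbed into $\eta_1$.) By the triangle inequality, $\beta(z,u)\le R+\eta_1(z)$, so once $|z|$ is close enough to $1$ we have $u\in (T_zZ+z)\cap D(z,R+1)$.

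Next, I would apply Lemma \ref{lem: NT close to T} with the enlarged radius $R+1$. This produces $\eta_2(z)$ with $\eta_2(z)\to 0$ such that the chosen $u$ admits a point $v\in NT_zZ$ with $\beta(u,v)\le \eta_2(z)$. A final triangle inequality yields
\[
\beta(w,NT_zZ)\le \beta(w,v)\le \eta_1(z)+\eta_2(z),
\]
and taking the supremum over $w\in Z\cap D(z,R)$ gives the desired conclusion.

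The argument presents no substantive obstacle; it is essentially a formal triangle-inequality consequence of the two preceding results. The one bookkeeping point worth flagging is the enlargement from radius $R$ to $R+1$ when invoking Lemma \ref{lem: NT close to T}, which is needed because the intermediate point $u$ may lie slightly outside $D(z,R)$. Since the excess $\eta_1(z)$ tends to $0$, any fixed positive enlargement suffices, and this is exactly why it is important that Proposition \ref{prop: distance estimate} and Lemma \ref{lem: NT close to T} are stated uniformly for every fixed radius.
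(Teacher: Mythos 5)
Your argument is correct and matches the paper's approach exactly: the paper states the lemma as an immediate consequence of combining Proposition \ref{prop: distance estimate} and Lemma \ref{lem: NT close to T}, and your triangle-inequality chain (together with the small enlargement of the radius, which the paper leaves implicit) is precisely the intended deduction.
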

	
	\section{A Covering Lemma}\label{section: a covering lemma}
	As a first step to proving Theorem \ref{thm: Smooth implies ASD}, for arbitrarily large $R>0$, we will  construct covers $\{E_i\}_{i\in\Lambda}$ and $\{F_i\}_{i\in\Lambda}$. In \cite{Xia18}, Xia constructed covers of $\bn$ with the bounded overlap condition (2) in Hypothesis 1. Our construction will follow the general framework of \cite{Xia18}, but with additional requirements. First, our covers need to take the variety $\Z$ into account. For this reason, we will construct the covers in two steps, first on $\Z$ and then on $\bn$. Second, to construct the decompositions on each cover set, we need the sets to be rotation invariant in certain directions. This property will be used in the proof of Lemma \ref{lem: general decomposition lemma}.

	For an arbitrarily large $R>0$, choose $s'$ so that 
	\[
	(s'-1)\log2\leq16R<s'\log2,
	\]
	and then take $s=10s'$. For a positive integer $k$, define 
	\[
	r_k=\sqrt{1-2^{-2sk}},~~~r_k'=\sqrt{1-2^{-2s'k}}.
	\]
	Write $\Z_k=r_kS\cap\Z$, where $r_k S=\{r_k x: x\in S\}$. Let $\LL_k\subset\Z_k$ be maximal with respect to the following property: if $u_i, u_j\in\LL_k$ and $u_i\neq u_j$, then $\ds(u_i, u_j)>s2^{-sk-2}$.
	
	For $u\in\LL_k$, set 
	\[
	\E_u=\{z\in\bn: 1-|z|^2>2^{-5s}, |1-\la z,u\ra|>s^{-2}, \frac{1-|z|^2}{|1-\la z,u\ra|^2}<2^{-2s}\},
	\]
	\[
	\F_u=\{z\in\bn: 1-|z|^2>2^{-6s}, |1-\la z,u\ra|>\frac{1}{2}s^{-2}, \frac{1-|z|^2}{|1-\la z,u\ra|^2}<2^{-s}\},
	\]
	and
	\[
	E_u=\varphi_u(\E_u), \quad F_u=\varphi_u(\F_u).
	\]
	
	\medskip
	Let
	\[
	\mathcal{O}=\{w\in\bn: \beta(w,\Z)>5s'\log2\}.
	\]
	Take $S_k=r_k'S\cap\mathcal{O}$. Let $\LL_k'\subseteq S_k$ be maximal with respect to the following property: if $v_i, v_j\in S_k$ and $v_i\neq v_j$, then $\ds(v_i, v_j)>s'2^{-s'k-2}$. For $v\in\LL_k'$, let
	\[
	\E_v'=\{z\in\bn: 1-|z|^2>2^{-5s'}, |1-\la z,v\ra|>s'^{-2},\frac{1-|z|^2}{|1-\la z,u\ra|^2}<2^{-2s'}\},
	\]
	and
	\[
	E_v'=\varphi_v(\E_v').
	\]
	Set
	\[
	\F_v'=\E_{v3}',\quad F_v'=E_{v3}',
	\]
	where for any set $A\subset\bn$ and $j\in\mathbb{N}$, we denote $A_j=\{w\in\bn: \beta(w, A)<jR\}$.
	Let $K$ be a positive integer determined later, and $\LL=\cup_{k\geq K}\LL_k$, $\LL'=\cup_{k\geq10K}\LL_k'$.

	 	\medskip

	Let us establish some basic properties of the sets.
	\begin{lem}\label{lem: E_u properties}
		For $k$, $s'$ large enough and $u\in\LL_k$, $v\in\LL_k'$, the following hold.
		\begin{itemize}
			\item[(1)]
			\[
			E_u=\{w\in\bn: 1-|w|^2\in(2^{-2(k+3)s},2^{-2(k+1)s}), d(u,w)<s2^{-ks}, 1-|\varphi_u(w)|^2>2^{-5s}\},
			\]
			\[
			E_v'=\{w\in\bn: 1-|w|^2\in(2^{-2(k+3)s'},2^{-2(k+1)s'}), d(v,w)<s'2^{-ks'}, 1-|\varphi_v(w)|^2>2^{-5s'}\},
			\]
			\[
			F_u=\{w\in\bn: 1-|w|^2\in(2^{-2(k+4)s},2^{-2ks-s}), d(u,w)<\sqrt{2}s2^{-ks}. 1-|\varphi_u(w)|^2>2^{-6s}\},
			\]
			\item[(2)]
			\[
			E_u\subseteq D(u, 3s\log2),\quad E_v'\subseteq D(v, 3s'\log2),\quad F_u\subset D(u, 4s\log2).
			\]
			\item[(3)]
			\[
			\{w\in\bn: \beta(w, E_u)<\frac{s}{4}\log2\}\subset F_u.
			\]
			Thus $E_{u4}\subset F_u$.
			\item[(4)]
		\[
		E_u\supseteq\{w\in\bn: 1-|w|^2\in[2^{-2(k+2)s}, 2^{-2(k+1)s}), \ds(u,w)<s2^{-ks-1}\},
		\]
		\[
		E_v'\supseteq\{w\in\bn: 1-|w|^2\in[2^{-2(k+2)s'}, 2^{-2(k+1)s'}), \ds(v,w)<s'2^{-ks'-1}\}.
		\]
			\item[(5)] There exists a constant $M>0$, depending only on $n$, such that any $z\in\bn$ belongs to at most $M$ of the sets $\{F_u\}_{u\in\LL}\cup\{F_v'\}_{v\in\LL'}$.
		\end{itemize}
	\end{lem}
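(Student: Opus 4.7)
The plan is to derive (1), (2), (4) by direct computation from the $\varphi_u$-identities in Lemma \ref{basic about varphi}, establish (3) by combining those identities with the $\beta$--$\rho$ conversion in Lemma \ref{lem: beta rho} and the pseudo-hyperbolic estimates in Lemma \ref{basic computations}, and prove (5) by a standard radial-spherical counting argument.

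For (1), writing $w=\varphi_u(z)$ with $z\in\E_u$ and $|u|^2=1-2^{-2sk}$, the identity $1-|\varphi_u(z)|^2=(1-|u|^2)(1-|z|^2)/|1-\la z,u\ra|^2$ from Lemma \ref{basic about varphi}(2) converts the third condition defining $\E_u$ into $1-|w|^2\in(2^{-2(k+3)s},2^{-2(k+1)s})$; the identity $1-\la u,\varphi_u(z)\ra=(1-|u|^2)/(1-\la u,z\ra)$ (a special case of Lemma \ref{basic about varphi}(1) with $z=0$) converts the second condition into $d(u,w)<s\cdot 2^{-sk}$; and the involutive property $\varphi_u\circ\varphi_u=\mathrm{id}$ gives $1-|\varphi_u(w)|^2=1-|z|^2$ for the third inclusion. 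The same calculation treats $\F_u$ and $\E_v'$ verbatim. The Bergman estimate in (2) then follows from $\rho(u,w)=|z|$ and Lemma \ref{lem: beta rho}(1).

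For (3), choose $w_0\in E_u$ with $\beta(w,w_0)<\tfrac{s}{4}\log 2$ and set $z_0=\varphi_u(w_0)\in\E_u$, $z=\varphi_u(w)$; by M\"obius invariance $\beta(z,z_0)<\tfrac{s}{4}\log 2$, so Lemma \ref{lem: beta rho}(2) yields $1-\rho^2(z,z_0)>2^{-s/2}$. Applying Lemma \ref{basic computations}(2) in its two symmetric forms (swapping $z$ and $z_0$) gives simultaneously $1-|z|^2\geq\tfrac14(1-\rho^2)(1-|z_0|^2)>2^{-5s-s/2-2}>2^{-6s}$ and $1-|z|^2\leq 4(1-|z_0|^2)/(1-\rho^2)<2^{-3s/2+4}$. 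Substituting these into $|1-\la z,z_0\ra|^2=(1-|z|^2)(1-|z_0|^2)/(1-\rho^2)$ bounds $d(z,z_0)=O(2^{-3s/4})$, and the triangle inequality for $d$ then upgrades $|1-\la z_0,u\ra|>s^{-2}$ to $|1-\la z,u\ra|>\tfrac12 s^{-2}$; dividing the resulting bounds produces $(1-|z|^2)/|1-\la z,u\ra|^2<2^{-s}$. Part (4) is the reverse computation: given $w$ with $1-|w|^2\in[2^{-2(k+2)s},2^{-2(k+1)s})$ and $\ds(u,w)<s\cdot 2^{-sk-1}$, set $z=\varphi_u(w)$; the radial equation $(1-|z|^2)/|1-\la z,u\ra|^2=(1-|w|^2)/(1-|u|^2)$ delivers the third defining inequality for $\E_u$, Lemma \ref{ds and d}(1) yields $|1-\la u,w\ra|\leq\ds^2+(1-|u|^2)+(1-|w|^2)\leq s^2\cdot 2^{-2sk}$ which via the identity above gives $|1-\la u,z\ra|>s^{-2}$, and combining these with the identity for $1-|z|^2$ produces $1-|z|^2\geq 2^{-4s}/s^4>2^{-5s}$ once $s$ is sufficiently large.

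For (5), the radial clause of (1) restricts the relevant $k$ for a fixed $z$: the condition $1-|z|^2\in(2^{-2(k+4)s},2^{-2ks-s})$ forces $k$ into an interval of logarithmic length $O(1)$, hence at most $O(1)$ values of $k$ contribute. Within a fixed $k$, any two $u_1,u_2\in\LL_k$ with $F_{u_i}\ni z$ satisfy $d(u_1,u_2)<2\sqrt{2}\,s\cdot 2^{-sk}$ by triangle inequality on $d$, hence $\ds(u_1,u_2)<4s\cdot 2^{-sk}$ by Lemma \ref{ds and d}(2); combined with the maximality separation $\ds(u_1,u_2)>s\cdot 2^{-sk-2}$, the spherical-cap estimate of Lemma \ref{Q delta size} caps the multiplicity by a constant depending only on $n$. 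The same count applies to $\LL'$, and the $F_u$--$F_v'$ cross-overlap is controlled by the same spherical counting together with the fact that the radial scales $s$ and $s'=s/10$ separate the on- and off-variety sets. The main obstacle is (3), where every estimate passed between $z$ and $z_0$ degrades by a factor of $2^{-s/2}$; the argument succeeds precisely because $s$ is at our disposal at the outset, so the error terms can be made negligible compared to the defining thresholds of $\F_u$, $\F_v'$.
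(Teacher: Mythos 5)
Your proof is correct, and the overall strategy matches the paper's: part (1) via the M\"obius identities of Lemma \ref{basic about varphi}, part (2) from the lower bound on $1-|\varphi_u(w)|^2$ and Lemma \ref{lem: beta rho}, parts (3)--(4) by triangle inequalities for $\beta$, $d$, $\ds$, and part (5) by the radial-shell plus spherical-cap counting via Lemma \ref{Q delta size}.

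The one genuine difference is organizational: for (3) (and to a lesser degree (4)) you pull everything back through $\varphi_u$ and verify the three defining inequalities of $\F_u$ for the transformed point $z=\varphi_u(w)$, whereas the paper works directly in the original $\bn$ coordinates, using the explicit description of $E_u$ established in (1) to estimate $1-|w|^2$, $d(u,w)$, and $1-|\varphi_u(w)|^2$ separately and then invoking (1) again for $F_u$. Both routes are the same computation in different bookkeeping; your version is perhaps slightly more systematic in that it reduces (3) entirely to a statement about $\E_u$ and $\F_u$, at the cost of having to carry the implicit upper bound $1-|z_0|^2<2^{-2s+2}$ for $z_0\in\E_u$ (which you use but do not state) and the threshold gaps between $\E_u$ and $\F_u$ explicitly. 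The only point I would flag as needing tightening is the final sentence of your (5): the cross-overlap between $\{F_u\}$ and $\{F_v'\}$ needs no separate argument about the relation between $s$ and $s'$ --- the paper simply bounds the two counts independently (noting for the $F_v'=E_{v3}'$ case that the analogue of (3) with $s'$ in place of $s$ shows $E_{v3}'$ sits inside a set of the same explicit form) and adds the two constants. Your appeal to ``radial scales separating on- and off-variety sets'' is not the mechanism at work and, as stated, is not an argument.
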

	
	\begin{proof}
		First, we prove (1). Suppose $u\in\LL_k$. By definition, $w\in E_u$ if and only if $\varphi_u(w)\in\E_u$, i.e.,
		\[
		1-|\varphi_u(w)|^2>2^{-5s},\quad |1-\la \varphi_u(w), u\ra|>s^{-2},\quad \frac{1-|\varphi_u(w)|^2}{|1-\la \varphi_u(w), u\ra|^2}<2^{-2s}.
		\]
		Since $1-|u|^2=2^{-2sk}$ and by Lemma \ref{basic about varphi} (1), the conditions above are equivalent to 
		\[
		1-|\varphi_u(w)|^2>2^{-5s},\quad |1-\la w,u\ra|<s^{2}2^{-2sk},\quad 1-|w|^2<2^{-2(k+1)s}.
		\]
		Also, by Lemma \ref{basic computations} (2),
		\[
		1-|w|^2=1-|\varphi_u\varphi_u(w)|^2>\frac{1}{4}(1-|u|^2)(1-|\varphi_u(w)|^2)>\frac{1}{4}2^{-2sk-5s}>2^{-2(k+3)s}.
		\]
		Thus we have 
		\[
		E_u=\{w\in\bn: 1-|w|^2\in(2^{-2(k+3)s},2^{-2(k+1)s}), d(u,w)<s2^{-ks}, 1-|\varphi_u(w)|^2>2^{-5s}\}.
		\]
		The proof of (1) for $E_v'$ and $F_u$ is similar. 
		
		The proof of (2) is a straightforward application of Lemma \ref{lem: beta rho} to the estimate of $1-|\varphi_u(w)|^2$ above.
		
		To prove (3), suppose $w\in\bn$ and $\beta(w, E_u)<\frac{s}{4}\log2$. Choose $z\in E_u$ such that $\beta(w, z)<\beta(w, E_u)+\log2<\frac{s}{4}\log2+\log2$. Then by Lemma \ref{lem: beta rho},
		\[
		1-|\varphi_w(z)|^2\geq e^{-2\beta(z,w)}>e^{-\frac{s}{2}\log2-2\log2}=2^{-\frac{s}{2}-2}.
		\]
		By Lemma \ref{basic computations} (2) and part (1) proved above,
		\begin{equation}\label{eqn: 5.1.1}
		1-|w|^2>\frac{1}{4}(1-|z|^2)(1-|\varphi_w(z)|^2)>\frac{1}{4}\cdot2^{-2(k+3)s}\cdot2^{-\frac{s}{2}-2}>2^{-2(k+4)s}
		\end{equation}
		and
		\begin{equation}\label{eqn: 5.1.2}
		1-|w|^2<4\frac{1-|z|^2}{1-|\varphi_w(z)|^2}<4\frac{2^{-2(k+1)s}}{2^{-2-\frac{s}{2}}}<2^{-2ks-s}.
		\end{equation}
		
		Also,
		\[
		d^4(z,w)=|1-\la z,w\ra|^2=\frac{(1-|z|^2)(1-|w|^2)}{1-|\varphi_w(z)|^2}<\frac{2^{-2(k+1)s}\cdot 2^{-2ks-s}}{2^{-\frac{s}{2}-2}}<2^{-4ks-2s}.
		\]
		Therefore $d(z,w)<2^{-ks-\frac{s}{2}}$. Hence 
		\begin{equation}\label{eqn: 5.1.3}
		d(u,w)\leq d(u,z)+d(z,w)<s2^{-ks}+2^{-ks-\frac{s}{2}}<\sqrt{2}s2^{-ks}.
		\end{equation}
		 Finally, by Lemma \ref{lem: beta rho}, 
		\[
		\beta(z,u)<\log2-\frac{1}{2}\log(1-|\varphi_u(z)|^2)<\log2-\frac{1}{2}\log2^{-5s}=(\frac{5}{2}s+1)\log2.
		\]
		Therefore
		\[
		\beta(w,u)\leq\beta(w,z)+\beta(z,u)<\frac{s}{4}\log2+\log2+(\frac{5}{2}s+1)\log2<3s\log2
		\]
		and
		\begin{equation}\label{eqn: 5.1.4}
		1-|\varphi_u(w)|^2>e^{-2\cdot3s\log2}=2^{-6s}.
		\end{equation}
		From inequalities \eqref{eqn: 5.1.1}\eqref{eqn: 5.1.2}\eqref{eqn: 5.1.3}\eqref{eqn: 5.1.4}, we have
		\[
		\{w\in\bn: \beta(w, E_u)<\frac{s}{4}\log2\}\subset F_u.
		\]
		This proves (3).

			Suppose $1-|z|^2\in[2^{-2(k+2)s}, 2^{-2(k+1)s})$ and $\ds(u,z)<s2^{-ks-1}$. By Lemma \ref{ds and d},
		\begin{eqnarray*}
			d^2(u,z)&<&\ds^2(u,z)+(1-|z|^2)+(1-|u|^2)\\
			&<&s^22^{-2ks-2}+2^{-2(k+1)s}+2^{-2ks}\\
			&<&s^22^{-2ks-1}.
		\end{eqnarray*}
		We have
		\[
		1-|\varphi_u(z)|^2=\frac{(1-|u|^2)(1-|z|^2)}{|1-\la u,z\ra|^2}>\frac{2^{-2ks}2^{-2(k+2)s}}{s^42^{-4ks-2}}=s^{-4}2^{-4s+2}>2^{-5s}.
		\]
		By (1), we have $z\in E_u$. This proves (4).
		
		Finally, we prove (5). Suppose $z\in\bn$. Let $k$ be the positive integer such that $2^{-2(k+1)s}<1-|z|^2\leq2^{-2ks}$. If $z\in F_u$ and $u\in\LL_l$, then by (1), $2^{-2(l+4)s}<1-|z|^2<2^{-2ls-s}$. Thus $k-3\leq l\leq k$. Since $z\in F_u$ and by Lemma \ref{ds and d} (2), $\ds(z,u)<\sqrt{2}d(z,u)<2s2^{-ls}$. By our construction, if $u_1\neq u_2$, $u_1, u_2\in\LL_l$, then $\ds(u_1,u_2)>s2^{-sl-2}$. By Lemma \ref{Q delta size}, the number of $u\in\LL_l$ such that $z\in F_u$ does not exceed $C\bigg(\frac{2s2^{-ls}+\frac{s}{2}2^{-sl-2}}{\frac{s}{2}2^{-sl-2}}\bigg)^{2n}<C2^{10n}$, where $C>0$ is a constant. Thus the number of $u\in\LL$ such that $z\in F_u$ does not exceed  $C2^{10n+2}$. Replacing $s$ with $s'$ in the proof of (3) will give us that the set $\{w\in\bn: \beta(w, E_v')<\frac{s'}{4}\log2\}$ is contained in
		\[
		\{w\in\bn: 1-|w|^2\in(2^{-2(k+4)s'}, 2^{-2ks'-s'}), d(v,w)<\sqrt{2}s'2^{-ks'}, 1-|\varphi_v(w)|^2>2^{-6s'}\}.
		\]
		Thus $E_{v3}'$ is contained in the right hand side of the above. Thus similar proof will show that $z$ belongs to at most  $C2^{10n}$ of the sets $F_v'=E_{v3}'$.
		Altogether, the total number of $u\in\LL$ and $v\in\LL'$ such that $z\in F_u$ or $z\in F_v'$ does not exceed $M:=C2^{10n+3}$. This completes the proof.
	\end{proof}
	
	\begin{lem}\label{lem: z is close to u}
		For $k$ large enough and $z\in\Z$ with $1-|z|^2\in[2^{-2(k+3)s}, 2^{-2ks}]$, there exists $u\in\LL_k$ such that 
		\[
		\ds(u,z)<\frac{s}{3}2^{-sk}.
		\]
	\end{lem}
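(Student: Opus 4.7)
The plan is to produce a nearby point $z^*\in\Z_k$ (that is, $z^*\in\Z$ with $|z^*|=r_k$), then invoke the maximality of $\LL_k$ to find $u\in\LL_k$ close to $z^*$ in spherical distance, and finally combine the two estimates via the triangle inequality for $\ds$. Throughout, I will choose $K$ large enough that every $z$ satisfying the hypothesis lies in a thin shell near $\partial\bn$ where the regularity and transversality of $\Z$ can be exploited uniformly.

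To construct $z^*$, I would use the transverse intersection of $Z$ with $\partial\bn$: for $|z|$ sufficiently close to $1$, there is a tangent vector $v_z\in T_zZ$ with $|Q_z(v_z)|\leq C_1|\la v_z,z\ra|$, as recorded in Section~\ref{section: a distance estimate}. Because $T_z\Z$ is a complex subspace, $iv_z$ also lies in $T_z\Z$, and a short computation shows the real gradient of $w\mapsto |w|^2$ restricted to the complex submanifold $\Z_{\mathrm{reg}}$ has modulus bounded below by a constant independent of $k$. The real-variable implicit function theorem then produces a real curve $\gamma:[0,T]\to\Z_{\mathrm{reg}}$ with $\gamma(0)=z$ along which $|w|^2$ decreases monotonically. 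Since $|z|^2-r_k^2\leq 2^{-2ks}-2^{-2(k+3)s}<2^{-2ks}$, the curve reaches a point $z^*=\gamma(T)\in\Z_k$ with $|z-z^*|\lesssim 2^{-2ks}$, the implicit constant being absolute.

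The standard inequalities $|\ps(z)-\ps(z^*)|\lesssim |z-z^*|$ (valid since $|z|,|z^*|\sim 1$) and $|1-\la a,b\ra|\lesssim |a-b|$ for $a,b\in S$ then give $\ds(z,z^*)\lesssim|z-z^*|^{1/2}\lesssim 2^{-ks}$ with an absolute constant. Choosing $R$, and hence $s$, sufficiently large at the outset, this upper bound is at most $\frac{s}{12}2^{-ks}$. On the other hand, maximality of $\LL_k$ forces some $u\in\LL_k$ to satisfy $\ds(u,z^*)\leq s\cdot 2^{-sk-2}=\frac{s}{4}2^{-ks}$, since otherwise $\LL_k\cup\{z^*\}$ would contradict the defining property of $\LL_k$. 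The triangle inequality then yields
\[
\ds(u,z)\leq\ds(u,z^*)+\ds(z^*,z)<\frac{s}{4}2^{-ks}+\frac{s}{12}2^{-ks}=\frac{s}{3}2^{-ks},
\]
as desired.

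The step I expect to be the main obstacle is obtaining the uniform Euclidean estimate $|z-z^*|\lesssim 2^{-2ks}$ with a constant independent of $z$. This requires not only pointwise transversality but a uniform lower bound on the transversality constant $C_1^{-1}$ and on the regularity radius of $\Z$ throughout a full shell near $\partial\bn$. Both follow from the hypotheses of Theorem~\ref{thm: Smooth implies ASD} (no singular points on $\partial\bn$ and compactness of $\partial\bn\cap\overline{\Z}$), but some care is required to package them into a quantitative implicit function theorem valid uniformly for all $z\in\Z$ with $|z|$ close enough to $1$.
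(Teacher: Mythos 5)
Your proposal is correct and shares the paper's high-level structure (produce a point of $\Z_k$ near $z$, then invoke maximality of $\LL_k$ and the triangle inequality for $\ds$), but the mechanism for producing that point is different. The paper parametrizes $\Z$ near $z$ as a graph $\zeta'\mapsto(\zeta',a(\zeta'))$ over $NT_zZ$, follows the explicit radial path $\gamma(t)=(1-t)z'+t\,r_{k-1}z'/|z|$, and locates $z_0\in\Z_k$ by the intermediate value theorem applied to $t\mapsto 1-|\Lambda(t)|^2$; the estimate $\ds(z,z_0)<2^{-sk+1}$ then comes out directly because $z$ and $\Gamma(t_0)$ have the same spherical projection, so $\ds(z,z_0)=\ds(\Gamma(t_0),\Lambda(t_0))$ reduces to bounding $1-|\gamma(t_0)|^2$. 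You instead run a gradient-flow argument on $|w|^2$ restricted to $\Z_{\mathrm{reg}}$. Both need the same geometric input — the uniform transversality bound $|Q_z(v_z)|\le C_1|\la v_z,z\ra|$ — and both yield $\ds(z,z^*)\lesssim 2^{-ks}$ with a constant independent of $k$ that is absorbed by taking $s$ large. Your route is conceptually cleaner but leaves more implicit (the lower gradient bound, the fact that the flow stays inside the regular locus because its arclength is $O(2^{-2ks})$, and the chain $\ds(z,z^*)^2\le|\ps(z)-\ps(z^*)|\lesssim|z-z^*|$); the paper's route is more computational but produces the constant $2^{-sk+1}$ explicitly with no hidden flow existence argument. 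One small wording issue: what you invoke is an ODE/flow existence argument for the (normalized) negative gradient field, not the implicit function theorem; and you should note explicitly that $|z|\ge r_k$ (which follows from $1-|z|^2\le 2^{-2ks}=1-r_k^2$) so that the curve really does cross the sphere $r_kS$. Otherwise the argument is sound.
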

	
	\begin{proof}
		By assumption, $Z$ intersects $\partial\bn$ transversely and has no singular points on $\partial\bn$. It is easy to see that for $z\in Z$ close enough to $\partial\bn$, the orthogonal projection onto $NT_zZ$, when restricted to $Z$, is a one-sheeted analytic cover in an Euclidean neighborhood $U$ of $z$. Consider such a specific $z$. Choose an orthonormal basis $\{e_1,\cdots,e_n\}$ of $\cn$ such that $e_1=\frac{z}{|z|}$ and $NT_zZ=\mathrm{span}\{e_1,\cdots,e_d\}$, where $d$ is the dimension of $Z$ at $z$. There is a vector-valued holomorphic  function $a$ such that $\zeta\in Z\cap U$ if and only if $\zeta=(\zeta', a(\zeta'))$ under the new basis, where $\zeta'=(\zeta_1,\cdots,\zeta_d)$, under the new basis. 
		
		We have $\frac{z'}{|z|}=(1,0,\cdots,0)\in\CC^d$ and $a(z')=0$ under the new basis. Consider the paths
		\[
		\gamma:[0,1]\to\CC^d, t\mapsto (1-t)z'+t\cdot r_{k-1}\frac{z'}{|z|},
		\]
		\[
		\Gamma(t)=(\gamma(t), 0)\subseteq NT_zZ,
		\]
		and
		\[
		\Lambda(t)=(\gamma(t), a(\gamma(t)))\subseteq\Z.
		\]
		Note that $\Lambda(0)=\Gamma(0)=z$, so $1-|\Lambda(0)|^2\leq2^{-2sk}$. On the other hand, by the standard inverse function theorem, there is a constant $C>0$ such that for $z\in\Z$ close enough to $\partial\bn$ and $\lambda', w'$ in a sufficiently small Euclidean neighborhood of $z'$, we have 
		\[
		|a(\lambda')-a(w')|\leq C|\lambda'-w'|.
		\] 
		So when $k$ is large enough,
		\begin{eqnarray*}
			1-|\Lambda(1)|^2&=&1-|\gamma(1)|^2-|a(\gamma(1))|^2\\
			&\geq&1-|\gamma(1)|^2-C^2|\gamma(1)-\gamma(0)|^2\\
			&\geq&2^{-2s(k-1)}-C^22^{-4sk}\\
			&\geq&2^{-2sk}.
		\end{eqnarray*}
		By the intermediate value theorem, there exists $t_0\in[0,1]$ such that
		\[
		1-|\Lambda(t_0)|^2=2^{-2sk}.
		\]
		Denote $z_0=\Lambda(t_0)$. Then $z_0\in\Z_k$. From our construction of $\LL_k$, there exists a $u\in\LL_k$ such that
		\[
		\ds(u,z_0)\leq s2^{-sk-2}.
		\]
		Also, since $\frac{z}{|z|}=\frac{\Gamma(t_0)}{|\Gamma(t_0)|}$,
		\begin{eqnarray*}
			\ds^2(z,z_0)&=&\ds^2(\Gamma(t_0),\Lambda(t_0))\\
			&<&2d^2(\Gamma(t_0),\Lambda(t_0))\\
			&=&2(1-|\gamma(t_0)|^2)\\
			&\leq&2(1-|\Lambda(t_0)|^2)+2|a(\gamma(t_0))|^2\\
			&\leq&2\cdot2^{-2sk}+2C^22^{-4sk}\\
			&\leq&2^{-2sk+2}.
		\end{eqnarray*}
		Thus
		\[
		\ds(z, z_0)<2^{-sk+1}.
		\]
		So
		\[
		\ds(z,u)\leq\ds(z,z_0)+\ds(z_0,u)<2^{-sk+1}+s2^{-sk-2}<\frac{s}{3}2^{-sk}.
		\]
		This completes the proof.
	\end{proof}
	
	\begin{lem}\label{lem: Eu contain Bergman nbhd of Z}
		For $s$ and $K$ large enough,
		\[
		\{w\in\bn:~1-|w|^2<2^{-2s(K+1)}, \beta(w,\Z)<(s-1)\log2\}\subset\bigcup_{k=K}^\infty\bigcup_{u\in\LL_k}E_u.
		\]
	\end{lem}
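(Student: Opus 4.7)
The plan is to combine Lemma \ref{lem: E_u properties}(4) with Lemma \ref{lem: z is close to u}: given a point $w$ close to $\Z$ in the Bergman metric, I will exhibit a companion point $z\in\Z$ of essentially the same modulus, invoke Lemma \ref{lem: z is close to u} to find $u\in\LL_k$ near $z$, and then verify that $w$ itself satisfies the sufficient condition for membership in $E_u$ supplied by Lemma \ref{lem: E_u properties}(4).

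Fix $w\in\bn$ with $1-|w|^2<2^{-2s(K+1)}$ and $\beta(w,\Z)<(s-1)\log 2$. First I would pick the unique integer $k\geq K$ with $1-|w|^2\in[2^{-2(k+2)s},2^{-2(k+1)s})$, and choose $z\in\Z$ with $\beta(w,z)<(s-1)\log 2$. By Lemma \ref{lem: beta rho} one has
\[
1-|\varphi_w(z)|^2\geq e^{-2\beta(w,z)}>2^{-2(s-1)}.
\]
Feeding this lower bound together with the range of $1-|w|^2$ into the two-sided estimate
\[
\tfrac{1}{4}(1-|w|^2)(1-|\varphi_w(z)|^2)<1-|z|^2<4\,\frac{1-|w|^2}{1-|\varphi_w(z)|^2}
\]
of Lemma \ref{basic computations}(2) yields $1-|z|^2\in(2^{-2(k+3)s},2^{-2ks})$. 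Provided $K$ is large enough that $z$ lies in the regime where Lemma \ref{lem: z is close to u} is applicable, that lemma produces $u\in\LL_k$ with $\ds(u,z)<\tfrac{s}{3}\,2^{-sk}$.

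It remains to bound $\ds(u,w)$. From $|1-\la z,w\ra|^2=(1-|z|^2)(1-|w|^2)/(1-|\varphi_w(z)|^2)$ together with the radial estimates above, I get $d^4(z,w)<2^{-4ks-2}$, and Lemma \ref{ds and d}(2) then gives
\[
\ds(z,w)\leq\sqrt{2}\,d(z,w)<2^{-ks}.
\]
The triangle inequality on $\ds$ yields $\ds(u,w)<(\tfrac{s}{3}+1)2^{-ks}$, which for $s\geq 6$ is at most $\tfrac{s}{2}\,2^{-ks}=s\,2^{-ks-1}$. Together with $1-|w|^2\in[2^{-2(k+2)s},2^{-2(k+1)s})$, Lemma \ref{lem: E_u properties}(4) delivers $w\in E_u$, completing the argument.

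The main obstacle is essentially bookkeeping: one must confirm that the several constants baked into the definitions of $E_u$ and $\LL_k$ — the thresholds $2^{-5s}$, the $s\,2^{-ks-1}$ spacing, and the radial exponents $2(k+2)s$ and $2(k+1)s$ — are matched by the losses incurred in passing from $w$ to $z$. The factor $4$ from Lemma \ref{basic computations}(2) and the $\sqrt 2$ from Lemma \ref{ds and d}(2) together absorb a factor of roughly $2^{2s-2}$ in the $\varphi_w(z)$-bound, which is precisely why the hypothesis is stated with the sharp threshold $(s-1)\log 2$, and why the argument requires both $s$ large (for the numerical inequality $\tfrac{s}{3}+1\leq\tfrac{s}{2}$) and $K$ large (so that Lemma \ref{lem: z is close to u} applies at radius $r_k$).
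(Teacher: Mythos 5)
Your proof is correct and rests on exactly the same two ingredients as the paper's: Lemma \ref{lem: z is close to u} to locate a suitable $u\in\LL_k$, and the sufficient-condition direction of Lemma \ref{lem: E_u properties}(4) to conclude $w\in E_u$. The only real difference is logical packaging: the paper argues by contraposition (assume $w$ is in no $E_u$ and show every $z\in\Z$ satisfies $\beta(w,z)\geq(s-1)\log 2$, which forces a case split on whether $1-|z|^2$ lies in the window $(2^{-2(k+3)s},2^{-2ks})$), whereas you argue directly, fixing one nearby $z\in\Z$ and using the bound $1-|\varphi_w(z)|^2>2^{-2s+2}$ together with Lemma \ref{basic computations}(2) to show that this $z$ automatically lands in the required radial window. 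This removes the case analysis and makes the dependence on the threshold $(s-1)\log 2$ more transparent, at the cost of being slightly less symmetric than the paper's ``distance to $\Z$'' formulation. Your arithmetic checks out (in particular $\tfrac{s}{3}+1\le\tfrac{s}{2}$ is automatic since $s=10s'\geq 10$), so the two proofs are genuinely interchangeable.
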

	
	\begin{proof}
		Suppose $w\notin\bigcup_{l=K}^\infty\bigcup_{u\in\LL_l}E_u$, and $1-|w|^2\in[2^{-2s(k+2)},2^{-2s(k+1)})$ for some $k\geq K$. For any $z\in\Z$, we want to show  that $\beta(z,w)\geq (s-1)\log2$.
		
		If $1-|z|^2\notin(2^{-2s(k+3)},2^{-2sk})$, then
		\[
		1-|\varphi_z(w)|^2<\min\{4\frac{1-|z|^2}{1-|w|^2}, 4\frac{1-|w|^2}{1-|z|}\}<2^{-2s+2}.
		\]
		Therefore
		\[
		\beta(z,w)>-\frac{1}{2}\log(1-|\varphi_z(w)|^2)>(s-1)\log2.
		\]
		
		If $1-|z|^2\in(2^{-2s(k+3)},2^{-2sk})$, by Lemma \ref{lem: z is close to u}, there exists $u\in\LL_k$ such that 
		\[
		\ds(u,z)<\frac{s}{3}2^{-sk}.
		\]
		
		Since $w\notin E_u$, by Lemma \ref{lem: E_u properties} (4),
		\[
		\ds(w,u)\geq s2^{-sk-1}.
		\]
		So
		\[
		\ds(w,z)\geq\ds(w,u)-\ds(z,u)>s2^{-sk-1}-\frac{s}{3}2^{-sk}=\frac{s}{6}2^{-sk}.
		\]
		Hence
		\[
		d^2(w,z)>\frac{1}{2}\ds^2(w,z)>\frac{s^2}{100}2^{-2sk}.
		\]
		Therefore we have
		\[
		1-|\varphi_z(w)|^2=\frac{(1-|z|^2)(1-|w|^2)}{|1-\la z,w\ra|^2}<10^4s^{-4}2^{-2s}<2^{-2s}.
		\]
		So
		\[
		\beta(z,w)>-\frac{1}{2}\log(1-|\varphi_z(w)|^2)>s\log2.
		\]
		This completes the proof.
	\end{proof}
	
	Lemma \ref{lem: Eu contain Bergman nbhd of Z} shows that, close to the boundary, the sets $E_u$ cover a Bergman neighborhood of the variety $\Z$. In fact, we have the following lemma.
	\begin{lem}\label{lem: Eu Ev are coverings}
		For $s'$ and $K$ large enough, the following are true.
		\begin{itemize}
			\item[(1)] For any $k\geq 10K$ and $v\in\LL_k'$,
			\[
			\{w\in\bn:~\beta(w, E_v')<s'\log2\}\cap\Z=\emptyset.
			\]
			\item[(2)] 
			\[
			\bigg(\bigcup_{k=10 K}^\infty\bigcup_{v\in\LL_k'} E_v'\bigg)\bigcup\bigg(\bigcup_{k= K}^\infty\bigcup_{u\in\LL_k} E_u\bigg)\supset\mathbb{B}_{\delta},
			\]
			where $1-\delta^2=2^{-2s(K+1)}$.
		\end{itemize}
	\end{lem}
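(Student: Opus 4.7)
The proof splits naturally into the two assertions. For part (1), the plan is to exploit that $v \in \mathcal{O}$, i.e.\ $\beta(v, \Z) > 5s'\log 2$. By Lemma \ref{lem: E_u properties}(2) (with $s'$ in place of $s$), $E_v' \subset D(v, 3s'\log 2)$, so any $w$ with $\beta(w, E_v') < s'\log 2$ satisfies $\beta(v, w) < 4s'\log 2$ by the triangle inequality. Hence $\beta(w, \Z) \geq \beta(v,\Z) - \beta(v,w) > s'\log 2 > 0$, which forces $w \notin \Z$.

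For part (2), I fix $w \in \Bd$ and dichotomize on whether $w$ is Bergman-close to $\Z$. If $\beta(w, \Z) < (s-1)\log 2$, Lemma \ref{lem: Eu contain Bergman nbhd of Z} already places $w$ in some $E_u$ with $u \in \LL_k$, $k \geq K$. Otherwise, since $s = 10s'$, we have $\beta(w, \Z) > (s-1)\log 2 > 5s'\log 2$, so $w \in \mathcal{O}$, and the goal becomes finding $v \in \LL_l'$ with $l \geq 10K$ and $w \in E_v'$.

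To do so, I choose the integer $l$ so that $1-|w|^2 \in [2^{-2(l+2)s'}, 2^{-2(l+1)s'})$; the assumption $1-|w|^2 < 2^{-2s(K+1)} = 2^{-20s'(K+1)}$ then forces $l \geq 10K$. Let $w' = r_l' w/|w|$ be the radial projection of $w$ onto $r_l' S$, so $\ds(w, w') = 0$. Writing $\la w,w'\ra = r_l'|w|$ and using $1-|w| \leq 1-|w|^2$ together with $1-r_l' \leq 1-(r_l')^2 = 2^{-2ls'}$, one checks $(1-r_l'|w|)^2$ lies between $\tfrac14$ and $4$ times $(2^{-2ls'})^2$; substituting into Lemma \ref{basic computations}(2) gives $1-|\varphi_{w'}(w)|^2 \geq 2^{-4s'-2}$, whence Lemma \ref{lem: beta rho} yields $\beta(w,w') < (2s'+2)\log 2$. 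Consequently $\beta(w', \Z) \geq \beta(w, \Z) - \beta(w, w') > (8s'-3)\log 2 > 5s'\log 2$ for $s'$ large, so $w' \in r_l' S \cap \mathcal{O} = S_l$. Maximality of $\LL_l' \subset S_l$ under the $\ds$-separation condition produces some $v \in \LL_l'$ with $\ds(v, w') \leq s' 2^{-s'l-2}$; since $\ds(v, w) = \ds(v, w')$, the inclusion criterion in Lemma \ref{lem: E_u properties}(4) places $w$ in $E_v'$.

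The one computation requiring care is the Bergman-distance estimate $\beta(w, w') < (2s'+2)\log 2$: it must be sharp enough that the gap between $(10s'-1)\log 2$ and $5s'\log 2$ absorbs it. That is the main (though mild) technical point; once it is in hand, everything else reduces to the triangle inequality and the properties of the covers already established in Lemmas \ref{lem: E_u properties} and \ref{lem: Eu contain Bergman nbhd of Z}.
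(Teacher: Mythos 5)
Your proposal is correct and follows essentially the same path as the paper's proof: part~(1) by the triangle inequality with Lemma~\ref{lem: E_u properties}(2) against the defining property of $\LL_k'$; part~(2) by first invoking Lemma~\ref{lem: Eu contain Bergman nbhd of Z} to handle points near $\Z$, then radially projecting $w$ to $w'=r_l'w/|w|$, estimating $\beta(w,w')$, concluding $w'\in S_l$, and finishing with the maximality of $\LL_l'$ and Lemma~\ref{lem: E_u properties}(4). The numerical constants differ slightly from the paper's (the paper bounds $\beta(w,w_0)<3s'\log2$ and gets $\beta(w_0,\Z)>6s'\log2$, while you bound $\beta(w,w')<(2s'+2)\log2$ and get $(8s'-3)\log2$), but both comfortably clear $5s'\log2$. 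One small slip: the identity $1-|\varphi_{w'}(w)|^2=\frac{(1-|w'|^2)(1-|w|^2)}{|1-\la w,w'\ra|^2}$ that you substitute into is Lemma~\ref{basic about varphi}(2), not Lemma~\ref{basic computations}(2); the latter gives only the two-sided comparison, while your computation of $(1-r_l'|w|)^2$ presupposes the exact formula. The conclusion is unaffected.
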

	\begin{proof}
	For $k\geq 10K$ and $v\in\LL_k'$, by Lemma \ref{lem: E_u properties} (2), 
		\[
		\{w\in\bn:~\beta(w, E_v')<s'\log2\}\subset D(v, 4s'\log2).
		\]
		If $\{w\in\bn:~\beta(w, E_v')<s'\log2\}\cap\Z\neq\emptyset$, then $\beta(v, \Z)<4s'\log2$,
		which contradicts the definition of $\LL_k'$. This proves (1).
		
		Suppose $w\in\mathbb{B}_\delta$. Then $1-|w|^2<2^{-2s(K+1)}$. If $w\notin\bigcup_{l= K}^\infty\bigcup_{u\in\LL_l} E_u$, then by Lemma \ref{lem: Eu contain Bergman nbhd of Z}, 
		\[
		\beta(w,\Z)\geq(s-1)\log2>9s'\log2.
		\]
		Suppose $1-|w|^2\in[2^{-2s'(k+2)}, 2^{-2s'(k+1)})$. Then $k>10K$. Let $w_0=\frac{r_k'}{|w|}w$. Then
		\[
		1-|\varphi_w(w_0)|^2=\frac{(1-|w|^2)(1-|w_0|^2)}{|1-\la w,w_0\ra|^2}>\frac{1}{4}\frac{1-|w|^2}{1-|w_0|^2}\geq\frac{2^{-2s'(k+2)}}{4\cdot2^{-2s'k}}=2^{-4s'-2}.
		\]
		By Lemma \ref{lem: beta rho}, $\beta(w, w_0)<\log2-\frac{1}{2}\log2^{-4s'-2}<3s'\log2$. Therefore
		\[
		\beta(w_0,z)\geq\beta(w,z)-\beta(w,w_0)>6s'\log2.
		\]
		Thus $w_0\in\mathcal{O}\cap r_k'S$.
		 By construction, there exists $v\in\LL_k'$ such that $\ds(w_0,v)<s'2^{-s'k-2}$. Therefore $\ds(w, v)=\ds(w_0, v)<s'2^{-s'k-2}$. By Lemma \ref{lem: E_u properties} (4), $w\in E_v'$. This completes the proof. 
	\end{proof}
	
	\section{Local Decomposition Formulas}\label{section: local decomposition formulas}
	
To show that the $I$ in Theorem \ref{thm: Smooth implies ASD} satisfies Hypothesis 1, we need to construct decompositions $\sum\pij\fij$ on the sets $E_{u3}$(or $E_{v3}'$), for $f\in I$. It is more convenient to construct the decompositions on their images under a M\"{o}bius transform.
	
	\begin{lem}\label{lem: decomposition on Fu}
		There exist a positive integer $N$ and a constant $C>0$, depending on $I$, such that the following hold. For any $R>0$ sufficiently large, let $\{E_u\}_{u\in\LL}$, $\{F_u\}_{u\in\LL}$, $\{E_v'\}_{v\in\LL'}$ and $\{F_v'\}_{v\in\LL'}$ be as in the beginning of Section \ref{section: a covering lemma}. Then there exists a positive integer $K$ such that the following hold.
		\begin{itemize}
			\item[(1)] For each $u\in\LL_k$, $k\geq K$, there is a finite set of polynomials $\{p_l\}\subset I$, $\sup_l\deg p_l\leq N$, and linear maps $I\to \mathrm{Hol}(E_{u3})$, $f\mapsto f_l$ satisfying the following inequalities.
			\begin{itemize}
				\item[(i)]$\int_{E_{u3}}\big|\sum_l p_lf_l-f\big|^2dv\leq C\epsilon_{R}^2\int_{F_u}|f|^2dv$.
				\item[(ii)]$\int_{E_{u3}}\big(\sum_l|p_lf_l|\big)^2dv\leq C\int_{F_u}|f|^2dv$.
			\end{itemize} 
		Here $\epsilon_R=\epsilon_{R,0}$ is defined in Lemma \ref{lem: norm estimates for integral kernels}.
			\item[(2)] For each $v\in\LL'_k$, $k\geq 10K$, there is a $p\in I$ such that $p$ is non-vanishing on $F_v'$ and $\deg p\leq N$. 
		\end{itemize}
	\end{lem}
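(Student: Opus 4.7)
I would handle the two parts separately, with part~(2) by a direct generator argument and part~(1) by constructing local decompositions from the canonical defining functions of the holomorphic chain $\sum_j m_j Z_j$.

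For part~(2), fix once and for all a finite set of polynomial generators $q_1,\ldots,q_m$ of $I$ with $N_0:=\max_i\deg q_i$. By Lemma~\ref{lem: Eu Ev are coverings}(1) together with the choice $s'\log 2>16R$, we have the uniform separation $\beta(F_v',\Z)\ge s'\log 2-3R>0$ for every $v\in\LL'$. Set $p_v(z):=\sum_{i=1}^m\overline{q_i(v)}\,q_i(z)\in I$; then $\deg p_v\le N_0$ and $p_v(v)=\sum_i|q_i(v)|^2>0$. A compactness argument after applying $\varphi_v$---using that $\varphi_v(F_v')$ lies in a fixed compact subset of $\bn\setminus\varphi_v(\Z)$ while the renormalized $q_i\circ\varphi_v^{-1}$ range over a normal family as $v$ varies over $\cup_k\LL_k'$---shows that $p_v$ is bounded below in absolute value on $F_v'$ uniformly, once $K$ is large enough. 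Taking $N:=N_0$ then suffices for part~(2).

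For part~(1), I would first apply $\varphi_u$ so that, by Lemma~\ref{lem: E_u properties}(2), the transformed sets $\tilde E_{u3}=\varphi_u(E_{u3})$ and $\tilde F_u=\varphi_u(F_u)$ all sit inside a fixed Bergman ball about the origin. Hypothesis~(2) of Theorem~\ref{thm: Smooth implies ASD} forces, for $K$ large, a unique irreducible component $Z_{j_0(u)}$ near $u$, with $Z\cap U=Z_{j_0}\cap U$ and $I|_U=I_{j_0}^{m_{j_0}}|_U$ on a fixed Euclidean neighborhood $U$ of $u$; hypothesis~(1) makes $Z_{j_0}$ smooth and transverse to $\partial\bn$ at $u$. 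Choose an orthonormal basis $\{e_1,\ldots,e_n\}$ of $\cn$ with $e_1=u/|u|$ and $\mathrm{span}(e_1,\ldots,e_d)=NT_uZ_{j_0}$, where $d=\dim Z_{j_0}$. By Proposition~\ref{prop: distance estimate} together with Lemma~\ref{lem: NT close to T} and Lemma~\ref{lem: Z close to NT}, the orthogonal projection $\pi$ onto $\mathrm{span}(e_1,\ldots,e_d)$ restricts (for $K$ large) to a one-sheeted proper analytic cover on $Z_{j_0}$ over a Euclidean neighborhood containing $\tilde F_u$, realizing $Z_{j_0}$ locally as the graph $z''=a(z')$. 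I would then take the $p_l$'s to be polynomial truncations of the canonical defining functions $P_{\pi|_{m_{j_0}Z_{j_0}},\alpha}$ from Definition~\ref{defn: CDF}(3), multiplied by fixed polynomial generators of $I_j^{m_j}$ for each $j\ne j_0$ so as to lie in the global ideal $I$, with total degree $\le N$ for some $N$ depending only on $\{m_j,\deg I_j\}$. Any $f\in I$ lies locally in $I_{j_0}^{m_{j_0}}$ and therefore admits a Taylor expansion in $w:=z''-a(z')$ with no terms of $w$-order below $m_{j_0}$; rewriting the leading $w^\alpha$ terms via the $p_l$'s gives the decomposition $f=\sum_l p_l f_l+\text{remainder}$ on $\tilde F_u$ with $f\mapsto f_l$ linear and $f_l\in\mathrm{Hol}(E_{u3})$. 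Inequality~(i) then follows from a Taylor-remainder estimate paired with the decay $\epsilon_R$ of Lemma~\ref{lem: norm estimates for integral kernels}(4), while inequality~(ii) comes from applying Theorem~\ref{thm: key inequality for polynomial} to each $p_l f_l$ individually and summing over the uniformly bounded number of terms.

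The main obstacle is passing from the germs of canonical defining functions, whose coefficients are only locally holomorphic, to honest polynomial elements of $I$ whose degrees are uniformly bounded in $u\in\LL$, while simultaneously keeping the Taylor remainder controlled by $\epsilon_R$. I plan to resolve this by (a) using Remark~\ref{rem: CDF basis} to track the effect of the unitary rotation taking $u/|u|$ to $e_1$ uniformly across $u\in\LL$; (b) exploiting the bounded Bergman geometry of $\tilde F_u$ to replace the locally holomorphic coefficients by polynomial approximations of degree $N$ at a cost absorbed into the $\epsilon_R$-remainder; and (c) multiplying by fixed $(j\ne j_0)$-generators to push the local decomposition into the global ideal $I$ without disturbing the uniform estimates on $\tilde E_{u3}$.
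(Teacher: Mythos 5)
Your part~(1) plan has a genuine gap at its central step, and the gap is precisely the ``main obstacle'' you identify. You propose to replace the locally holomorphic coefficients of the canonical defining functions by polynomial approximations of degree $N$ and then to ``push the local decomposition into the global ideal $I$'' by multiplying with fixed generators of $I_j^{m_j}$ for $j\ne j_0$. But a degree-$N$ truncation of the local canonical defining function does not vanish to order $m_{j_0}$ along $Z_{j_0}$ (it need not vanish on $Z_{j_0}$ at all), so it is not in $I_{j_0}^{m_{j_0}}$; multiplying it by elements of $I_j^{m_j}$ for $j\ne j_0$ produces something in $\cap_{j\ne j_0}I_j^{m_j}$ but still not in $I=\cap_j I_j^{m_j}$. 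The requirement $\{p_l\}\subset I$ is therefore violated, and this cannot be absorbed into the $\epsilon_R$-remainder: the error is in the \emph{structure} of the candidate generators, not in an integral estimate. The paper avoids truncation entirely. The key point you are missing is Theorem~\ref{thm: global CDF are polynomials}: for a proper coordinate projection $\pi$ of an algebraic set $A$ of pure dimension, the \emph{global} canonical defining functions $P_{\pi,\alpha}$ are themselves polynomials in $z$, with degrees bounded by a constant $N$ depending only on $A$ and not on the choice of basis. Together with Hilbert's Nullstellensatz this puts $P_{\pi_u|_A}^m(\cdot,w_i)$ directly in $J^m$. The local graph decomposition of Lemma~\ref{lem: general decomposition lemma} is phrased in terms of the \emph{local} canonical defining functions $\hat P_{\pi_u|_A}$ of a one-sheeted cover, and the bridge from local to global is Theorem~\ref{thm: local CDF}: the ratio $\psi_w=P_{\pi_u|_A}(\cdot,w)/\hat P_{\pi_u|_A}(\cdot,w)$ is a nonvanishing holomorphic function on a fixed neighborhood, uniformly in $u$ near a fixed boundary point. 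Dividing the locally-constructed $f_\alpha$ by $\psi_{w_i}^m$ and pairing with the polynomial $P_{\pi_u|_A}^m(\cdot,w_i)$ gives generators genuinely in $J^m$ with no truncation error. Without this mechanism your plan stalls.

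Your part~(2) proposal is a different route from the paper's. The paper again produces the non-vanishing polynomial $p\in I$ from a global canonical defining function $P_{\pi_\xi|_A}^m(\cdot,w_0)$, using the geometric fact that after $\varphi_\xi$ the set $F_v'$ has small Euclidean diameter and sits at a definite direction away from $\varphi_\xi(A)$, together with the nonvanishing of $\psi_{w_0}$. Your candidate $p_v=\sum_i\overline{q_i(v)}q_i$ is an interesting alternative, but as written the normal-family argument does not yet close: (a) the lemma wants nonvanishing on all of $F_v'$, not just at $v$, and the zero set of $\sum_i\overline{q_i(v)}q_i$ is generally larger than $Z(I)$ and has no a priori reason to avoid $F_v'$; (b) locally-uniform convergence of $q_i\circ\varphi_v$ to a limit is not enough unless you also show the limit is not identically zero on the relevant compact set, which requires combining the Bergman separation $\beta(v,\Z)>5s'\log 2$ with the transversality hypothesis to control the spherical distance from $v/|v|$ to $Z\cap\partial\bn$. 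If you can make that limit argument precise, your approach would give an arguably more elementary proof of part~(2), but as sketched it is incomplete.

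In short: for part~(1) you need Theorem~\ref{thm: global CDF are polynomials} and the local/global ratio of Theorem~\ref{thm: local CDF} rather than polynomial truncation; for part~(2) your alternate construction is plausible but needs a rigorous non-vanishing argument that you have not supplied.
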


	Some preparations are needed for the proof of Lemma \ref{lem: decomposition on Fu}. 
	\begin{lem}\label{lem: xy decomposition}
		For any positive integer $m, k$, and $x, y\in\CC$, $x, y\neq0$, there are constants $\{c_j\}_{j=0}^{m-1}, \{d_j\}_{j=0}^{k-1}$, such that
		\[
		\frac{1}{x^k}=\sum_{j=0}^{m-1}c_j\frac{(x-y)^j}{y^{k+j}}+\sum_{j=0}^{k-1}d_j\frac{(x-y)^m}{x^{k-j}y^{m+j}}.
		\]
	\end{lem}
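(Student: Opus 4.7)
The plan is to prove the identity by induction on $m$, using a simple factorization as the engine. The base case $m = 0$ is trivial: the first sum is empty and we take $d_0 = 1$ and $d_j = 0$ for $j \geq 1$, so that the claimed identity reduces to $\frac{1}{x^k} = \frac{1}{x^k}$.

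The key ingredient that drives the induction is the $m = 1$ version of the identity for arbitrary positive integer $\ell$, namely
\[
\frac{1}{x^\ell} = \frac{1}{y^\ell} - (x-y)\sum_{i=0}^{\ell-1}\frac{1}{x^{\ell-i} y^{i+1}},
\]
which I would derive directly from the factorization $x^\ell - y^\ell = (x-y)\sum_{i=0}^{\ell-1} x^{\ell-1-i} y^i$ by dividing by $x^\ell y^\ell$ and reindexing. This is a purely elementary algebraic identity, valid for all $x, y \neq 0$.

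For the inductive step, assume the claim for $m$ with constants $\{c_j\}_{j=0}^{m-1}$ and $\{d_j\}_{j=0}^{k-1}$. Apply the $m=1$ identity above with $\ell = k-j$ to each factor $\frac{1}{x^{k-j}}$ appearing in the remainder sum $\sum_{j=0}^{k-1} d_j \frac{(x-y)^m}{x^{k-j} y^{m+j}}$. This produces
\[
\sum_{j=0}^{k-1} d_j \frac{(x-y)^m}{x^{k-j} y^{m+j}} = \Bigl(\sum_{j=0}^{k-1} d_j\Bigr) \frac{(x-y)^m}{y^{k+m}} - (x-y)^{m+1} \sum_{j=0}^{k-1} d_j \sum_{i=0}^{k-j-1} \frac{1}{x^{k-j-i} y^{m+j+i+1}}.
\]
The first term on the right is exactly the new $c_m \frac{(x-y)^m}{y^{k+m}}$ contribution with $c_m := \sum_j d_j$. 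For the second term, swap the order of summation via $J = j + i$: since $i$ ranges from $0$ to $k-j-1$, $J$ ranges from $0$ to $k-1$ and $j$ from $0$ to $J$. The second term becomes $(x-y)^{m+1} \sum_{J=0}^{k-1} d'_J \frac{1}{x^{k-J} y^{m+1+J}}$ with $d'_J := -\sum_{j=0}^J d_j$, which is precisely the shape required at level $m+1$.

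Because the lemma only asserts existence of suitable constants rather than closed formulas, there is no genuine obstacle here; the proof is mere bookkeeping. The one step that demands a little care is the reindexing of the double sum over $(j,i)$ to a double sum over $(J,j)$ with $0 \leq j \leq J \leq k-1$, and then packaging the inner $j$-sum into the new coefficients $d'_J$. After that, the inductive hypothesis closes automatically and the proof is complete.
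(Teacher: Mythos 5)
Your proof is correct and takes essentially the same route as the paper's: induction on $m$, with the $m=1$ identity (derived from the factorization of $x^\ell - y^\ell$) applied to each $1/x^{k-j}$ in the remainder sum to push from level $m$ to level $m+1$. The only cosmetic differences are that you start the induction at the trivial case $m=0$ rather than $m=1$, and you spell out the reindexing $J = j+i$ that the paper leaves implicit; both are harmless.
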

	\begin{proof}
		We prove by induction on $m$. For $m=1$ and any $k$,
		\[
		\frac{1}{x^k}=\frac{1}{y^k}+(\frac{1}{x^k}-\frac{1}{y^k})
		=\frac{1}{y^k}-\sum_{j=0}^{k-1}\frac{x^jy^{k-1-j}}{x^ky^k}(x-y)
		=\frac{1}{y^k}-\sum_{j=0}^{k-1}\frac{(x-y)}{x^{k-j}y^{1+j}}.
		\]
		Suppose the equation holds for $m-1$ and any $k$. Then
		\begin{eqnarray*}
			\frac{1}{x^k}&=&\sum_{j=0}^{m-2}c_j'\frac{(x-y)^j}{y^{k+j}}+\sum_{j=0}^{k-1}d_j'\frac{(x-y)^{m-1}}{x^{k-j}y^{m-1+j}}\\
			&=&\sum_{j=0}^{m-2}c_j'\frac{(x-y)^j}{y^{k+j}}+\sum_{j=0}^{k-1}d_j'\frac{(x-y)^{m-1}}{y^{m-1+j}}\big(\frac{1}{y^{k-j}}-\sum_{i=0}^{k-j-1}\frac{(x-y)}{x^{k-j-i}y^{1+i}}\big)\\
			&=&\sum_{j=0}^{m-1}c_j\frac{(x-y)^j}{y^{k+j}}+\sum_{l=0}^{k-1}d_l\frac{(x-y)^m}{x^{k-l}y^{m+l}}.
		\end{eqnarray*}
		This completes the proof.
	\end{proof}
	
	The following lemma is elementary.
	\begin{lem}\label{lem: sphereical shell}
		Suppose $0<a<b$ and $d$ is a positive integer. Write $B_a^b=\{z\in\CC^d: a<|z|<b\}$. Then for any $f\in\mathrm{Hol}(b\mathbb{B}_d)$ we have 
		\begin{itemize}
			\item[(1)]
			\[
		\int_{b\mathbb{B}_d}|f|^2dv\leq\frac{b}{b-a}\int_{B_a^b}|f|^2dv.
		\]	
		\item[(2)]For any multi-index $\alpha=(\alpha_1,\cdots,\alpha_d)$,
		\[
		\int_{B_a^b}f(w)\overline{w^\alpha}dv(w)=c_{a,b}\partial^\alpha f(0).
		\]
		Here $c_{a,b,\alpha}=\frac{1}{\alpha !}\int_{B_a^b}|w^\alpha|^2dv(w).$
		\end{itemize}
	\end{lem}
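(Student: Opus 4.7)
The plan is to prove both parts using the Taylor expansion $f(w) = \sum_\alpha c_\alpha w^\alpha$ at the origin, with $c_\alpha = \partial^\alpha f(0)/\alpha!$, combined with the orthogonality of monomials in $L^2(\Omega, dv)$ for any rotationally invariant $\Omega \subset \CC^d$ --- applicable both to $\Omega = b\mathbb{B}_d$ and to $\Omega = B_a^b$. Writing $w = r\zeta$ with $\zeta \in S^{2d-1}$, such an $\Omega$ is a product of a radial set in $(0,\infty)$ with the full unit sphere; since the $\zeta^\alpha$ are mutually orthogonal on $S^{2d-1}$ with respect to surface measure, the cross-terms $\int_\Omega w^\beta \overline{w^\alpha}\, dv$ vanish whenever $\beta \neq \alpha$.

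For part (2), the Taylor series converges uniformly on each compact $\overline{B_a^r}$ with $r < b$, so termwise integration against $\overline{w^\alpha}$ on $B_a^r$ leaves only the $\beta = \alpha$ term, producing $c_\alpha \int_{B_a^r} |w^\alpha|^2\, dv$. Passing to $r \uparrow b$ by dominated convergence (legitimate whenever the integral on the left is finite, which is the only case of interest) yields exactly $c_{a,b,\alpha} \partial^\alpha f(0)$. For part (1), the same orthogonality gives
\[
\int_{b\mathbb{B}_d} |f|^2\, dv = \sum_\alpha |c_\alpha|^2 N(\alpha,b), \qquad \int_{B_a^b} |f|^2\, dv = \sum_\alpha |c_\alpha|^2 M(\alpha, a, b),
\]
where $N$ and $M$ denote $\|w^\alpha\|_{L^2}^2$ on the respective regions. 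It therefore suffices to verify the pointwise ratio bound $N(\alpha, b)/M(\alpha, a, b) \leq b/(b-a)$ for every $\alpha$. After factoring out the (common) spherical integral, this ratio reduces to $\int_0^b r^{2|\alpha|+2d-1}\, dr \,/ \int_a^b r^{2|\alpha|+2d-1}\, dr = 1/\bigl(1 - (a/b)^{2|\alpha|+2d}\bigr)$. Since $d \geq 1$ forces $2|\alpha|+2d \geq 2 \geq 1$, we have $(a/b)^{2|\alpha|+2d} \leq a/b$, so the ratio is at most $1/(1-a/b) = b/(b-a)$, exactly as claimed.

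The one point meriting mild care is the interchange of summation and integration as $B_a^r \uparrow B_a^b$ meets the boundary sphere $\{|w|=b\}$. This is handled by monotone convergence in (1) (the integrand is nonnegative, and both sides may legitimately equal $+\infty$) and by dominated convergence in (2) (where the finiteness implicit in the statement majorizes each partial sum by an integrable function on any slightly smaller shell). No deeper obstacle arises: the content of the lemma is the elementary observation that, because monomial orthogonality decouples the Taylor coefficients, the $L^2$-norm of a holomorphic function on a ball is dominated by its $L^2$-norm on any shell sharing the same outer radius, with the simple ratio constant $b/(b-a)$.
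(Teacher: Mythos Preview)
Your proposal is correct. The paper omits the proof entirely, labeling the lemma as elementary, and your argument via Taylor expansion together with orthogonality of monomials on rotationally invariant domains is exactly the standard elementary route one would expect; the ratio bound $(a/b)^{2|\alpha|+2d}\le a/b$ for $d\ge 1$ is the only nontrivial step in part~(1), and you handle it cleanly.
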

	
	\begin{lem}\label{lem: general decomposition lemma}
		Let $d, m$ be positive integers, $0<d<n$. For $R>0$ and a positive integer $k$ that are large enough, the following hold. Let $s=10s'$, where $s'$ is the unique integer satisfying $(s'-1)\log2\leq16R<s'\log2$. Write $r=\sqrt{1-2^{-2ks}}$, $u=(r,0,\cdots,0)$, and define $E:=\E_u$, $F:=\F_u$ as in the beginning of Section \ref{section: a covering lemma}, i.e.,
		\[
		E=\{z\in\bn: 1-|z|^2>2^{-5s}, |1-\la z,u\ra|>s^{-2}, \frac{1-|z|^2}{|1-\la z,u\ra|^2}<2^{-2s}\},
		\]
		and
		\[
		F=\{z\in\bn: 1-|z|^2>2^{-6s}, |1-\la z,u\ra|>\frac{1}{2}s^{-2}, \frac{1-|z|^2}{|1-\la z,u\ra|^2}<2^{-s}\}.
		\]
		
		Write $U'=\pi(F)$, where $\pi$ is the projection $\pi: \CC^n\to\CC^d, z\mapsto z':=(z_1,\cdots,z_d)$.	Then there exist $\delta>0$, depending only on $d, m, R$, and $C>0$, depending only on $d, m$, with the following properties. 
		
		Suppose $a: U'\to\CC^{n-d}$ is holomorphic and $|a(z')|<\delta$, $\forall z'\in U'$. Write $A=\{(z',a(z')): z'\in U'\}$. Let $\{P_\alpha=P_{\pi|_{mA},\alpha}\}$ be the set of canonical defining functions with respect to $\pi|_{mA}$.  Here $\alpha=(\alpha_{d+1},\cdots,\alpha_n)$, $|\alpha|=m$. Then there exist linear maps $f\mapsto f_\alpha$, where $f\in\mathrm{Hol}(\bn)$, $f|_{mA}=0$, and $f_\alpha\in\mathrm{Hol}(E_3)$, with the following properties.
		\begin{itemize}
			\item[(1)]
			\[
			\int_{E_3}|f-\sum_{|\alpha|=m}P_\alpha f_\alpha|^2dv\leq C\epsilon_{R}^2\int_{F}|f|^2dv.
			\]
			\item[(2)] For any $\alpha, \beta, |\alpha|=|\beta|=m$,
			\[
			\int_{E_3}|P_\beta f_\alpha|^2dv\leq C\int_{F}|f|^2dv.
			\]
		\end{itemize}
		Here $\epsilon_{R}=\epsilon_{R,0}$ is the constant defined in Lemma \ref{lem: norm estimates for integral kernels}.
		
	\end{lem}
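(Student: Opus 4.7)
The plan is to construct the $f_\alpha$ as explicit integral operators, combining a weighted Bergman reproducing formula on $\bn$ with the algebraic kernel identity of Lemma~\ref{lem: xy decomposition}. Since $A$ is the graph of $a$, the projection $\pi|_A$ is a one-sheeted analytic cover, so by Definition~\ref{defn: CDF} we have $P_{\pi|_A}(z,w)=\langle z''-a(z'),w\rangle$, and raising to the $m$-th power yields $P_\alpha(z)=\binom{m}{\alpha}(z''-a(z'))^\alpha$ for $|\alpha|=m$.

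Fix $l>n+m$ and start from the weighted reproducing formula $f(z)=c_l\int_{\bn}f(w)(1-\langle z,w\rangle)^{-(n+1+l)}(1-|w|^2)^l\,dv(w)$. Set $x=1-\langle z,w\rangle$ and $y=1-\langle(z',a(z')),w\rangle$, so that $x-y=-\langle z''-a(z'),w''\rangle$ with $w''=(w_{d+1},\dots,w_n)$. Applying Lemma~\ref{lem: xy decomposition} with $k=n+1+l$ splits the kernel $x^{-(n+1+l)}$ into a sum of $m$ Taylor-type terms $c_j(x-y)^j y^{-(n+1+l+j)}$ for $0\le j<m$, plus a remainder involving $(x-y)^m$. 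Using the standard identity $\partial_\zeta^\gamma g(\zeta)=\mathrm{const}\cdot\int g(w)\overline{w}^\gamma(1-|w|^2)^l(1-\langle\zeta,w\rangle)^{-(n+1+l+|\gamma|)}\,dv(w)$, the first sum integrates over $\bn$ to a linear combination of partial derivatives $\partial_{z''}^\gamma f(z',a(z'))$ with $|\gamma|<m$, all of which vanish by $f|_{mA}=0$. Expanding $(x-y)^m=(-1)^m\sum_{|\alpha|=m}\binom{m}{\alpha}(z''-a(z'))^\alpha\overline{w''}^\alpha$ in the remainder and factoring out $P_\alpha(z)$ gives the exact identity $f(z)=\sum_{|\alpha|=m}P_\alpha(z)F_\alpha(z)$ on $\bn$, where $F_\alpha$ is an explicit holomorphic integral operator applied to $f$. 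I define $f_\alpha$ by restricting the $F_\alpha$ integration from $\bn$ to $F$.

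The bound (2) follows from Schur-type tests using parts (1) and (2) of Lemma~\ref{lem: norm estimates for integral kernels}: the factor $|P_\beta(z)|\le\binom{m}{\beta}|z''-a(z')|^m$ is uniformly bounded on $E_3\subset\bn$ since $|a(z')|<\delta<1$, and the kernels appearing in $f_\alpha$ obey the standard Rudin--Forelli-type bounds, giving $\int_{E_3}|P_\beta f_\alpha|^2\,dv\le C\int_F|f|^2\,dv$. The error in (1) equals $\sum_\alpha P_\alpha(F_\alpha-f_\alpha)$, which is the analogous integral over $F^c$; since $E\subset F$ with a gap of at least a constant multiple of $R$ in the Bergman metric (the analogue of Lemma~\ref{lem: E_u properties}(3) for the pre-M\"obius sets $\mathcal{E}_u,\mathcal{F}_u$), one has $F^c\subset D(z,R)^c$ for every $z\in E_3$, so Lemma~\ref{lem: norm estimates for integral kernels}(4) supplies the $\epsilon_R$-decay.

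The main obstacle is the mismatch of $L^2$ norms: a naive application of Lemma~\ref{lem: norm estimates for integral kernels}(4) produces $\epsilon_R\|f\|_{L^2(\bn)}$, whereas the stated bound requires $\epsilon_R\|f\|_{L^2(F)}$. I expect this to be remedied by recasting the construction in terms of a local reproducing formula on a Bergman ball $D(z,R)\subset F$, so that both $f_\alpha$ and the resulting error depend only on $f|_F$. The delicate point is preserving, in the localized formula, the cancellation of the Taylor-type terms that relied on integrating against the full reproducing measure on $\bn$; this is likely to be accomplished either by using the shell-integral formula of Lemma~\ref{lem: sphereical shell}(2) to extract the vanishing derivatives from shell averages contained in $F$, or by absorbing the resulting discrepancy into the acceptable $\epsilon_R$ error.
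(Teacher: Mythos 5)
Your overall blueprint is close to the paper's: you use Lemma~\ref{lem: xy decomposition} on the reproducing kernel, peel off the Taylor-type terms, identify the $P_\alpha$-factors in the remainder, and invoke the integral-operator bounds of Lemma~\ref{lem: norm estimates for integral kernels}. You also correctly flag the norm-mismatch (you need $\int_F|f|^2$ on the right, not $\int_{\bn}|f|^2$) as the crux. But the proposal leaves that crux unresolved, and the unresolved part is exactly where the paper's argument has content. Two concrete issues.

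First, you cannot both have $f=\sum_\alpha P_\alpha F_\alpha$ as an exact identity on $\bn$ (via the vanishing of derivatives at $p(z)=(z',a(z'))$) and obtain bounds in terms of $\int_F|f|^2$. The paper therefore does \emph{not} first establish an exact global decomposition and then truncate; it defines $f_\alpha$ directly as $c_R^{-1}\int_F f(w)G(z,w)\overline{w''^\alpha}\,dv(w)$ with the \emph{unweighted} kernel $K_w$, so everything from the start depends only on $f|_F$. To recover $f(z)$ from such a local integral it uses the exact identity $T_0^R f(z)=c_{R,0}f(z)$ of Lemma~\ref{lem: norm estimates for integral kernels}(3) together with $D(z,R)\subset F$ for $z\in E_3$; the difference $\int_F f K_w\,dv-c_R f(z)$ is supported on $F\setminus D(z,R)$ and is controlled by Lemma~\ref{lem: norm estimates for integral kernels}(4). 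Your choice of a high-weight $l>n+m$ is unnecessary once you integrate only over $F$, which stays compactly in $\bn$.

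Second, your hope that the Taylor-type terms still ``cancel'' after localizing (your first proposed remedy, the shell-average route) runs into a point you haven't addressed: the fiber of $F$ over $w'$ is a shell or ball \emph{centered at $w''=0$}, so Lemma~\ref{lem: sphereical shell}(2) extracts $\partial^\gamma_{z''}f(w',0)$, \emph{not} $\partial^\gamma_{z''}f(w',a(w'))$. The former do not vanish. The paper handles this in two steps: it replaces $p(z)$ by $\pi(z)=(z',0)$ in the kernel $I(z,w)$ at an $O(\delta)$ cost, and then bounds $|\partial^\gamma f(w',0)|$ by $C_2\delta\,\|f\|_{L^2(F_1)}$ via a Cauchy-type estimate, using $\partial^\gamma f(w',a(w'))=0$ and $|a(w')|<\delta$. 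This is where the parameter $\delta$ is finally pinned down: after the constants depending on $R$ (such as $2^{6s(n+m)}$ and the $F$-versus-$F_1$ comparison in Lemma~\ref{lem: sphereical shell}) are recorded, $\delta$ is chosen small enough that their product is $\le\epsilon_R$. Your proposal never threads $\delta$ through this way, so the Taylor-term contribution is not actually shown to be $O(\epsilon_R)\,\|f\|_{L^2(F)}$. Until that step is carried out (the shell derivatives are \emph{small}, not zero, with an explicit $\delta$-dependence compatible with $R$), the proof of inequality (1) is incomplete.
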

	\begin{proof}
		To simplify notations, we write $\pi(z)=(z',0)$ and $p(z)=(z',a(z'))$.

		We will construct the functions $f_\alpha$ by decomposing the reproducing kernel $K_w(z)$. For $z\in F$ and $w\in\bn$, applying Lemma \ref{lem: xy decomposition} for $x=1-\la z,w\ra$, $y=1-\la p(z),w\ra$, we get
		\begin{eqnarray*}
			K_w(z)&=&\sum_{j=0}^{m-1}c_j\frac{\la a(z')-z'',w''\ra^j}{(1-\la p(z),w\ra)^{n+1+j}}+\sum_{j=0}^nd_j\frac{\la a(z')-z'',w''\ra^m}{(1-\la z,w\ra)^{n+1-j}(1-\la p(z),w\ra)^{m+j}}\\
			&=&I(z,w)+II(z,w).
		\end{eqnarray*}
		Notice that $\la a(z')-z'', w''\ra^m=P_{\pi|_{mA}}(z,w'')=\sum_{|\alpha|=m}P_\alpha(z)\overline{w''^\alpha}$. Write 
		\[
		G(z,w)=\sum_{j=0}^nd_j\frac{1}{(1-\la z,w\ra)^{n+1-j}(1-\la p(z),w\ra)^{m+j}}.
		\]
		Then $II(z,w)=\sum_{|\alpha|=m}G(z,w)\overline{w''^\alpha}P_\alpha(z)$. Let
		\[
		f_\alpha(z)=c_R^{-1}\int_{F_1}f(w)G(z,w)\overline{w''^\alpha}dv(w).
		\]
		Here $c_R=c_{R,0}$ is the constant in Lemma \ref{lem: norm estimates for integral kernels}.
		Then 
		\begin{equation}\label{eqn: 6.4.1}
		(P_\beta f_\alpha)(z)=c_R^{-1}\int_{F_1}f(w)G(z,w)\overline{w''^\alpha}P_\beta(z)dv(w).
		\end{equation}
		 We will show that the functions satisfy inequalities (1) and (2).
		
		First, we prove inequality (2). Notice that by definition, $1-|z|^2>2^{-6s}, \forall z\in F$. If $\delta<2^{-6s-2}$ and $|a(\xi')|\leq\delta$, $\forall\xi'\in U'$, then for any $z\in E_3$, $w\in F$, we have
		\[
		|1-\la p(z),w\ra|=|1-\la z',w'\ra-\la a(z'),w''\ra|\geq|1-\la z',w'\ra|-|a(z')|>|1-\la z',w'\ra|-2^{-6s-2}\geq\frac{1}{2}|1-\la z',w'\ra|.
		\]
		Here the last inequality holds because 
		\[
		|1-\la z',w'\ra|\geq1-|w|\geq\frac{1}{2}(1-|w|^2)>2^{-6s-1}.
		\]
	Thus
		\[
		|G(z,w)|\lesssim\sum_{j=0}^n\frac{1}{|1-\la z,w\ra|^{n+1-j}|1-\la z',w'\ra|^{m+j}}.
		\]
		For each pair $\alpha, \beta, |\alpha|=|\beta|=m$ and $z\in E_3, w\in F$, since 
		\[
		|\overline{w''^\alpha}|\leq|w|^{m/2}\leq(1-|w'|^2)^{m/2}\lesssim|1-\la z',w'\ra|^{m/2},
		\]
		and  
		\[
		|P_\beta(z)|\leq|a(z')-z''|^{m/2}\lesssim|1-\la z',w'\ra|^{m/2},
		\]
		we have
		\[
		|G(z,w)\overline{w''^\alpha}P_\beta(z)|\lesssim\sum_{j=0}^n\frac{1}{|1-\la z,w\ra|^{n+1-j}|1-\la z',w'\ra|^{j}}\lesssim\frac{1}{|1-\la z,w\ra|^{n+1}}+\frac{1}{|1-\la z',w'\ra|^{n+1}}.
		\]
			By \eqref{eqn: 6.4.1} and Lemma \ref{lem: norm estimates for integral kernels}, we obtain inequality (2).

		Next, we prove (1). For any $z\in E_3$, by Lemma \ref{lem: E_u properties} (2), (3), $D(z,R)\subset F$. By Lemma \ref{lem: norm estimates for integral kernels} (3),
		\begin{equation}\label{eqn: 1}
		\int_{F}f(w)K_w(z)dv(w)-c_R f(z)=\int_{F\backslash D(z,R)}f(w)K_w(z)dv(w).
		\end{equation}
		Thus
		\begin{eqnarray}\label{eqn: (1)}
		&&f(z)-\sum_\alpha P_\alpha f_\alpha\nonumber\\
		&=&c_R^{-1}\bigg(\int_{F}f(w)K_w(z)dv(w)-\int_{F\backslash D(z,R)}f(w)K_w(z)dv(w)-\int_{F}f(w)II(z,w)dv(w)\bigg)\nonumber\\
		&=&c_R^{-1}\bigg(\int_{F}f(w)I(z,w)dv(w)-\int_{F\backslash D(z,R)}f(w)K_w(z)dv(w)\bigg).
		\end{eqnarray}
		
		Each term of $I$ has the form $\frac{(a(z')-z'')^\alpha\overline{w''^\alpha}}{(1-\la p(z),w\ra)^{n+1+j}}$, where $0\leq j\leq m-1$ and $|\alpha|=j$. We want to prove that their integrals with $f$ are small. First, since $E_3\subset F$, $E_3\subset\{z\in\bn: 1-|z|^2>2^{-6s}\}$, which is contained in a compact subset of $\bn$, we can find $C_1>0$, depending only on $ m, R$, such that 
		\[
	\bigg|\frac{(a(z')-z'')^\alpha\overline{w''^\alpha}}{(1-\la p(z),w\ra)^{n+1+j}}-\frac{(-z'')^\alpha\overline{w''^\alpha}}{(1-\la \pi(z),w\ra)^{n+1+j}}\bigg|\leq C_1|a(z')|\leq C_1\delta,\quad\forall z\in E_3, w\in F.
		\]
	Therefore
	\begin{equation}\label{eqn: I}
	\big|\int_{F}f(w)I(z,w)dv(w)-\int_{F}f(w)\sum_{j=0}^{m-1}c_j\frac{\la-z'',w''\ra^j}{(1-\la z',w'\ra)^{n+1+j}}dv(w)\big|\lesssim C_1\delta\int_{F}|f(w)|dv(w).
	\end{equation}
	Notice that for each $w'\in U'$, we have $|1-\la w',u'\ra|>\frac{1}{2}s^{-2}$, and therefore
	\[\pi^{-1}(w')\cap F=\{(w',w''): 1-|w'|^2-2^{-s}|1-\la w',u'\ra|^2<|w''|^2<1-|w'|^2-2^{-6s}\}.\]
	Each fiber is either a spherical shell or a ball in $\CC^{n-d}$. From this and Lemma \ref{lem: sphereical shell} (2), it is easy to see that for $z'\in \pi(E_3)$,
	\begin{eqnarray}\label{eqn: integral for I}
	&&\bigg|\int_{F}f(w)\sum_{j=0}^{m-1}c_j\frac{\la-z'',w''\ra^j}{(1-\la z',w'\ra)^{n+1+j}}dv(w)\bigg|\\
	&\lesssim&\sum_{j=0}^{m-1}\int_{U'}\frac{1}{|1-\la z',w'\ra|^{n+1+j}}dv(w')\sup_{z'\in\pi(E_3), |\alpha|\leq m-1}|\partial^\alpha f(z',0)|\\
	&\lesssim&2^{6s(n+m)}\sup_{z'\in \pi(E_3), |\alpha|\leq m-1}|\partial^\alpha f(z',0)|.
	\end{eqnarray}
	It remains to show that $\sup_{z'\in\pi(E_3), |\alpha|\leq m-1}|\partial^\alpha f(z',0)|$ is sufficiently small. Obtain $F_1$ by filling the ``holes'' in $F$, i.e.,
	\[
	F_1=\{(w',w'')\in\bn: w'\in U', |w''|^2<1-|w'|^2-2^{-6s}\}.
	\]
By Lemma \ref{lem: E_u properties}, $E_3\subset D(0,4s\log2)$ and $F\supset E_4$. Thus for $\delta$ small enough, the set of points $\{\pi(z): z\in E_3\}\cup\{p(z): z\in E_3\}$ has a positive Euclidean distance (independent of $k$) to the boundary of $F_1$. So there exists a constant $C_2>0$ (depending only on $d, m, R$) such that $\forall z\in E_3, \forall g\in\mathrm{Hol}(F_1)$ and $\forall \alpha$, $|\alpha|\leq m-1$,
\[
|(\partial^\alpha g)(p(z))-(\partial^\alpha g)(\pi(z))|^2\leq C_2^2|p(z)-\pi(z)|^2\int_{F_1}|g(w)|^2dv(w)\leq C_2^2\delta^2\int_{F_1}|g(w)|^2dv(w).
\]
Since $f|_{mA}=0$, we have $(\partial^\alpha f)(p(z))=0$ for all $z\in E_3$ and $\alpha=(\alpha_{d+1},\cdots,\alpha_n)$, $|\alpha|\leq m-1$. Thus
\begin{equation}\label{eqn: f pi(z) small}
\sup_{z'\in\pi(E_3),|\alpha|\leq m-1}|(\partial^\alpha f)(z',0)|^2\leq C_2^2\delta^2\int_{F_1}|f(w)|^2dv(w),\quad\forall z\in E_3.
\end{equation}
We need to compare the $L^2$-norms on $F_1$ and $F$. For any $w'\in U'$, if $1-|w'|^2-2^{-s}|1-\la w',u'\ra|^2>0$, then for $s$ large enough, we have
\begin{eqnarray*}
	&&\frac{1-|w'|^2-2^{-6s}}{(1-|w'|^2-2^{-6s})-(1-|w'|^2-2^{-2s}|1-\la w',u'\ra|^2)}\\
	&=&\frac{1-|w'|^2-2^{-6s}}{2^{-2s}|1-\la w',u'\ra|^2-2^{-6s}}\\
	&\leq&\frac{1}{2^{-2s-2}s^{-4}-2^{-6s}}\leq 2^{2s+3}s^4.
	\end{eqnarray*}
Then by Lemma \ref{lem: sphereical shell} and a simple double integral argument, we have
\begin{equation}\label{eqn: norm on shell and ball}
\int_{F_1}|g|^2dv\leq 2^{2s+3}s^4\int_{F}|g|^2dv,\quad\forall g\in\mathrm{Hol}(F_1).
\end{equation}
	
Combining inequalities \eqref{eqn: integral for I}\eqref{eqn: f pi(z) small}\eqref{eqn: norm on shell and ball}, we get
\[
|\int_{F}f(w)\sum_{j=0}^{m-1}c_j\frac{\la-z'',w''\ra^j}{(1-\la z',w'\ra)^{n+1+j}}dv(w)|\lesssim2^{6s(n+m)}C_2\delta\cdot 2^{s+2}s^2\bigg(\int_{F}|f|^2dv\bigg)^{1/2}.
\]	
Then by inequality \eqref{eqn: I} and Holder's inequality, we get
\[
\big|\int_{F}f(w)I(z,w)dv(w)\big|\lesssim (C_1\delta+2^{6s(n+m)}C_2\delta\cdot 2^{s+2}s^2)\bigg(\int_{F}|f|^2dv\bigg)^{1/2}.
\]		
If we choose $\delta$ small enough, we can make $(C_1\delta+2^{6s(n+m)}C_2\delta\cdot 2^{s+2}s^2)\leq\epsilon_R$.	Then inequality (1) follows from inequality \eqref{eqn: (1)}, Lemma \ref{lem: norm estimates for integral kernels} and our estimates above. This completes the proof.	
	\end{proof}
	
	The following Lemma is a simplified version of Lemma \ref{lem: decomposition on Fu} (1).
	\begin{lem}\label{lem: u decomposition}
		Suppose $J$ is a prime ideal in $\poly$. Write $A=Z(J)$. Suppose $A$ has no singular point in $\partial\bn$ and intersects $\partial\bn$ transversely. Then for  a positive integer $m$ there exist a positive integer $N$ and constant $C>0$ with the following property. For $R>0$ sufficiently large, there exists a positive integer $K$ such that the following hold. Let $s'$ be the positive integer such that $(s'-1)\log2\leq16R<s'\log2$ and $s=10s'$,
		\begin{itemize}
			\item[(1)]
		For each $u\in A$, $1-|u|^2=2^{-2sk}$, $k\geq K$, define $E_u$ and $F_u$ as in the beginning of Section \ref{section: a covering lemma}. Then there is a finite set of polynomials $\{p_l\}\subset J^m$, $\sup_l\deg p_l\leq N$, and linear maps $J\to\mathrm{Hol}(E_{u3})$, $f\mapsto f_l$ satisfying the following inequalities.
		\begin{itemize}
			\item[(i)]$\int_{E_{u3}}\big|\sum_l p_lf_l-f\big|^2dv\leq C\epsilon_{R}^2\int_{F_u}|f|^2dv$.
			\item[(ii)]$\int_{E_{u3}}\big(\sum_l|p_lf_l|\big)^2dv\leq C\int_{F_u}|f|^2dv$.
		\end{itemize}
	\item[(2)] For each $v\in\bn$, $\beta(v, A)>5s'\log2$, $1-|v|^2=2^{-2s'k}, $ $k\geq10K$, define $F_v'$ as in the beginning of Section \ref{section: a covering lemma}. Then there exists a polynomial $p\in J^m$ such that $p$ is non-vanishing on $F_v'$ and $\deg p\leq N$.
\end{itemize}
	\end{lem}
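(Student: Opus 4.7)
The plan is to reduce Lemma~\ref{lem: u decomposition}(1) to Lemma~\ref{lem: general decomposition lemma} by transporting the local picture at any boundary-close $u\in A$ to the standard setup of that lemma, and then to replace the resulting holomorphic coefficients by polynomials in $J^m$ of uniformly bounded degree; part~(2) is handled separately using a global polynomial canonical defining function specialized in the $w$-variable.

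For part~(1), fix $u\in A$ with $1-|u|^2=2^{-2sk}$, $k\geq K$. I would choose a unitary $l=l_u$ with $l(u)=u_0:=(|u|,0,\ldots,0)$ and $l(NT_uA)=\mathrm{span}(e_1,\ldots,e_d)$, which is possible since $NT_uA$ contains the line $\CC u$. A direct computation with the formula for $\varphi_u$ shows that $NT_uA$ is $\varphi_u$-invariant as a subset of $\bn$, so the composite Bergman isometry $\Phi:=\varphi_{u_0}\circ l$ carries $NT_uA$ onto $\mathrm{span}(e_1,\ldots,e_d)$. Combining this with Lemma~\ref{lem: Z close to NT} and the inclusion $\mathcal{F}\subset D(0,4s\log 2)$, for $K$ large the relevant component of the transformed variety $\tilde A:=\Phi(A)\cap\pi^{-1}(U')$ (with $U'=\pi(\mathcal{F})$) is a one-sheeted graph $\{(z',a(z'))\}$ whose graph function satisfies $\|a\|_\infty<\delta$ for any prescribed $\delta>0$; the Euclidean smallness follows from the Bergman smallness together with the scale $(1-|z|^2)\asymp 2^{-Cs}$ on $\mathcal F$. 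Setting $\tilde f:=f\circ\Phi^{-1}$ for $f\in J^m$ gives $\tilde f|_{m\tilde A}=0$, so Lemma~\ref{lem: general decomposition lemma} furnishes a decomposition $\tilde f=\sum_\alpha P_\alpha\tilde f_\alpha$ on $\mathcal{E}_3$ with the two $L^2$-estimates. Pulling back by $\Phi$, which carries $\mathcal{E}_3$ isometrically to $E_{u3}$, yields
\[
f(z)=\sum_\alpha (P_\alpha\circ\Phi)(z)\cdot(\tilde f_\alpha\circ\Phi)(z)\quad\text{on }E_{u3},
\]
with the corresponding estimates. To replace the non-polynomial coefficients $P_\alpha\circ\Phi$ by polynomials in $J^m$, I would use that $A$ is smooth on $\partial\bn$ and $E_{u3}$ lies in a neighborhood of $\partial\bn$ where the ideal sheaf of $J^m$ is locally generated by a finite set $\{p_l\}\subset J^m$ of polynomials of degree $\leq N$; a local-to-global argument via Cartan's Theorem~B combined with the compactness of $A\cap\partial\bn$ produces holomorphic coefficients $h_{\alpha l}$ on $E_{u3}$ with uniform $L^\infty$-bounds such that $P_\alpha\circ\Phi=\sum_l h_{\alpha l}p_l$, and setting $f_l:=\sum_\alpha h_{\alpha l}(\tilde f_\alpha\circ\Phi)$ produces the required polynomial decomposition with inherited estimates.

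For part~(2), I would fix a generic linear projection $\pi_0\colon\cn\to\CC^d$ finite on $A$ and form the polynomial $P(z,w)=P_{\pi_0|_{mA}}(z,w)=\sum_\beta\widehat P_\beta(z)\,\overline{w^\beta}$, whose coefficients $\widehat P_\beta$ are polynomials in $z$ of degree $\leq N$ lying in $J^m$ (using smoothness of $A$ at the relevant points to identify $J^m$ with its symbolic power). Given $v$ with $\beta(v,A)>5s'\log 2$ and the prescribed depth, the radius estimates of Lemma~\ref{lem: E_u properties} force $F_v'\cap A=\emptyset$, so for every $z\in F_v'$ the polynomial $P(z,\cdot)$ is not identically zero and its zero set in $\CC^{n-d}$ is contained in a union of hyperplanes. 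The ``bad'' set
\[
\Sigma_v:=\{w\in\CC^{n-d}:P(z,w)=0\text{ for some }z\in F_v'\}
\]
is a union of continuously varying hyperplanes parametrized by the relatively compact $F_v'$, which is bounded away from $A$ and (by genericity of $\pi_0$) avoids the critical locus of $\pi_0$; hence $\Sigma_v$ is a proper subset of $\CC^{n-d}$, and choosing any $w_v\notin\Sigma_v$ yields $p(z):=P(z,w_v)\in J^m$ of degree $\leq N$ non-vanishing on $F_v'$, completing the proof. The main obstacle in this strategy is the uniformity in $u$ of both the Euclidean smallness of $a$ required to invoke Lemma~\ref{lem: general decomposition lemma} and the $L^\infty$-bounds on $h_{\alpha l}$ in part~(1); both reduce to a compactness argument on $A\cap\partial\bn$ using the smoothness and transversality hypotheses.
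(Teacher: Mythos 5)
Your overall plan for part (1) follows the paper's blueprint (M\"obius-transport to the Lemma~\ref{lem: general decomposition lemma} setup, then convert the local canonical defining functions to polynomial generators of $J^m$), but the conversion step is where you and the paper diverge, and your version has a genuine gap. The paper does \emph{not} appeal to Cartan's Theorem~B. Instead, it introduces a slightly perturbed plane $L_\zeta$ near $NT_\zeta A$ so that $\overline{L_\zeta}^\perp\cap\overline{A-\zeta}=\emptyset$, which by Lemma~\ref{lem: nonintersection implies proper} and Theorem~\ref{thm: global CDF are polynomials} makes the \emph{global} canonical defining functions $P_{\pi_u|_A}(z,w)$ polynomials of degree $\leq N$ lying in $J$. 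Theorem~\ref{thm: local CDF} then produces a fixed neighborhood $U_2$ of $\zeta$ and an open $\mathcal{W}$ so that the ratio $\psi_w=P_{\pi_u|_A}/\hat P_{\pi_u|_A}$ is non-vanishing on $U_2$ uniformly over $u$ near $\zeta$; this is what makes the passage between the local CDFs $\hat P_\alpha$ of Lemma~\ref{lem: general decomposition lemma} and the polynomial generators explicit and uniformly controlled. Your Cartan-Theorem-B division $P_\alpha\circ\Phi=\sum_l h_{\alpha l}p_l$ ``with uniform $L^\infty$-bounds'' is precisely the step that needs justification: the function $P_\alpha\circ\Phi_u$ is only defined on $\Phi_u^{-1}(\pi^{-1}(U')\cap\bn)$, which \emph{shrinks} to the boundary as $u\to\partial\bn$, so any coherence/division constant you extract from a closed-range argument depends on a domain that degenerates with $u$. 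Without something like the paper's ratio $\psi_w$ living on a fixed $U_2$, the claimed uniformity is not established and the $L^2$ estimates (i) and (ii) do not follow. A secondary point: you project onto $NT_uA$ itself rather than a perturbed $L_u$, which is fine if you are not using global properness, but then you lose the mechanism (Theorem~\ref{thm: global CDF are polynomials}) that guarantees your $p_l$ are polynomials of degree $\leq N$; asserting that the ideal sheaf is ``locally generated by a finite set $\{p_l\}\subset J^m$ of polynomials of degree $\leq N$'' is exactly what needs proving, not what you may assume.

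For part (2) the gap is sharper. You fix a single global projection $\pi_0$ and argue that $\Sigma_v=\{w:P(z,w)=0\text{ for some }z\in F_v'\}$ is a proper subset of $\CC^{n-d}$ because $F_v'$ is relatively compact, avoids $A$, and (you claim) avoids the critical locus. None of these facts imply properness of $\Sigma_v$: $\Sigma_v$ is a union of through-the-origin hyperplanes $\{w:\la a_j(z')-z'',w\ra=0\}$ indexed by the full-dimensional continuum $z\in F_v'$, and such a union can fill $\CC^{n-d}$ unless one controls the \emph{angular spread} of the directions $a_j(z')-z''$ over $z\in F_v'$. This is exactly what the paper supplies: after transporting by $\varphi_\xi$ (where $\xi$ is the $A$-point closest to $v$), it shows $\mathrm{diam}\,\varphi_\xi(F_v')\leq 2^{-s'}$ while $\varphi_\xi(v)$ is approximately orthogonal to $T_0\varphi_\xi(A)$, so the bad directions occupy less than half the measure of $\mathbb{P}_{n-d-1}$, forcing $\mathcal{W}\cap\mathcal{W}_1\neq\emptyset$. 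Working directly in the original coordinates with a fixed $\pi_0$, the ratio (Euclidean diameter of $F_v'$)/(Euclidean distance to $A$) is not obviously small and the paper's estimate does not transfer by any simple scaling, since M\"obius transforms do not preserve the fibers of $\pi_0$. Also, $\pi_0(F_v')$ will generically meet the critical locus of $\pi_0|_A$ (that locus lives in $\CC^d$, while $\pi_0(F_v')$ is full-dimensional there), so the parenthetical claim ``avoids the critical locus'' is unsupported. Finally, your remark about identifying $J^m$ with its symbolic power is unnecessary and potentially misleading: both the paper's $p=P_{\pi_\xi|_A}^m(z,w_0)$ and your $p=P(z,w_v)=P_{\pi_0|_A}(z,w_v)^m$ are honest $m$-th powers of polynomials in $J$ (hence in $J^m$ directly), so the symbolic-power issue never arises as long as you use the power of the CDF rather than the chain coefficients $\hat P_\beta$.
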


\begin{proof}
	We want to apply Lemma \ref{lem: general decomposition lemma}. A M\"{o}bius transform $\varphi_u$ will allow us to transfer between decompositions on $E_{u3}$ and $\E_{u3}$. Also, we need to make adjustments so that the generating functions $p_l$ are polynomials in $J^m$.
	
	Denote $d=\dim A$. For any $\zeta\in\partial\bn\cap A$, since $A$ intersects $\partial\bn$ transversely at $\zeta$, $A$ is not contained in $\zeta^\perp+\zeta$. By Theorem \ref{thm: proper projections on tangent plane}, the set 
	\[
	\tilde{\mathcal{U}}_\zeta=\{l\in U(\zeta^\perp):  \overline{(l\oplus 1)(NT_\zeta A)}^{\perp}\cap \overline{A-\zeta}=\emptyset\}.
	\]
	is dense in $U(\zeta^\perp)$. Here we denote $L^\perp$ the orthogonal complement of a linear space $L$ in the complex projective space $\mathbb{P}_n$ (cf. Appendix) and $U(\zeta^\perp)$ the space of all unitary transforms on $\zeta^\perp\cong\CC^{n-1}$. We consider $l\oplus 1$ as acting on the $\zeta^\perp$ by $l$ and acting on $\CC\zeta$ as identity.
	
	Let $\delta>0$ be the constant in Lemma \ref{lem: general decomposition lemma} determined by $d, m, R$. If $\overline{NT_\zeta A}^\perp\cap \overline{A-\zeta}=\emptyset$, then take $L_\zeta=NT_\zeta A$. Otherwise, choose $l\in \tilde{\mathcal{U}}_\zeta$ close enough to the identity $I_{\zeta^\perp}$ so that the Hausdorff distance 
	\[
	\dist_H(L_\zeta\cap\bn, NT_\zeta A\cap\bn)\leq\delta/4,
	\]
	where $L_\zeta=(l\oplus1)(NT_\zeta A)$. In either case we have $\zeta\in L_\zeta$, $\overline{L_\zeta}^\perp\cap\overline{A-\zeta}=\emptyset$, and 
	$$
	\dist_H(L_\zeta\cap\bn, NT_\zeta A\cap\bn)\leq\delta/4.
	$$
	Choose a new basis $\{e_1,\cdots,e_n\}$ such that $L_\zeta=\mathrm{span}\{e_1,\cdots,e_d\}$. Denote $\pi_\zeta$ the projection from $\cn$ onto $L_\zeta$. 
	
	By Lemma \ref{lem: nonintersection implies proper}, $\pi_\zeta|_{A-\zeta}$ is proper. Since $\zeta$ is a regular point of $A$ and $A$ intersects $\partial\bn$ transversely,  $\pi_\zeta$ defines a one-sheeted analytic cover of $A-\zeta$ in a small neighborhood of $0$. Apply Theorem \ref{thm: local CDF} to $A-\zeta$ and $L_\zeta$. Let $U_1$, $\mathcal{U}$ and $\W$ be as in Theorem \ref{thm: local CDF}. Thus for any $w\in \W$ and $l\in\mathcal{U}$, the function
	\[
	\frac{P_{\pi_\zeta|_{l(A-\zeta)}}(z,w)}{P_{\pi_\zeta|_{l(A-\zeta)\cap U_1}}(z,w)}
	\]
	is non-vanishing and holomorphic in $U_1$.
	
	Start with an open neighborhood $U$ of $\zeta$. For $u\in A\cap U$, let $L_u=L_\zeta\cap u^\perp+\CC u$ and denote $\pi_u$ the projection from $\cn$ onto it. Since $\zeta\in L_\zeta$, the definition is consistent at $\zeta$, and the spaces $L_u$ vary continuously with $u$. Thus for $u$ close enough to $\zeta$, we can find $l_u\in\mathcal{U}$ such that $l_u^{-1}(L_\zeta)=L_u$. So $\pi_u=l_u^{-1}\pi_\zeta l_u$. For any $w\in \W$,
	\[
	\frac{P_{\pi_\zeta|_{l_u(A-\zeta)}}(z,w)}{P_{\pi_\zeta|_{l_u(A-\zeta)\cap U_1}}(z,w)}
	\]
	is non-vanishing in $U_1$. Equivalently, for any $w\in \W$,
	\begin{equation}\label{eqn: 6.5.1}
	\frac{P_{\pi_\zeta|_{l_u(A)}}(z,w)}{P_{\pi_\zeta|_{l_u(A)\cap (U_1+l_u(\zeta))}}(z,w)}
	\end{equation}
	is non-vanishing in $U_1+l_u(\zeta)$. Note that the definition of canonical defining functions depend on the choice of a basis. Let $e_{u,i}=l_u^{-1}(e_i)$. By Remark \ref{rem: CDF basis}, under the new basis $\{e_{u,1},\cdots,e_{u,n}\}$, we have
	\[
	\psi_w(z):=\frac{P_{\pi_u|_A}(z,w)}{P_{\pi_u|_{A\cap(l_u^{-1}U_1+\zeta)}}(z,w)}
	\]
	is non-vanishing for $z\in l_u^{-1}U_1+\zeta$ and $w\in \W$. It is easy to see that $l_u^{-1}(U_1)+\zeta\supset U_2, \forall u\in U$, for some open neighborhood $U_2$ of $\zeta$. Let us use $\PPP$ with appropriate subscripts to denote the locally defined canonical defining functions depending on a one-sheeted analytic cover on the piece of manifold $A\cap U$, or its image under a M\"{o}bius transform. Then we have shown that for any $u\in U$ and $w\in \W$,
	\[
	\psi_w(z)=\frac{P_{\pi_u|_A}(z,w)}{\PPP_{\pi_u|_A}(z,w)}
	\]
is non-vanishing on $U_2$.

	On the other hand, both $L_u$ and the normal tangent spaces $NT_u A$ vary continuously. By shrinking the neighborhood $U$ we can also assume that 
	\[
	\dist_H(L_u\cap\bn, NT_u A\cap\bn)\leq\delta/2,\quad\forall u\in U\cap A.
	\]
	By Lemma \ref{lem: Z close to NT} and \ref{lem: E_u properties} (2),again, by shrinking $U$, we also have for any $u\in U$,
	\[
	\dist(z, NT_uA)\leq\delta/2, \quad\forall z\in \varphi_u(A)\cap\F_u.
	\]
	Thus
	\[
	\dist(z, L_u)\leq\delta, \quad \forall z\in \varphi_u(A)\cap\F_u.
	\]
    
    For any $f\in J^m$, define $U_uf(z)=f\circ\varphi_u(z)\cdot k_u(z)$, where $k_u(z)=\frac{(1-|u|^2)^{(n+1)/2}}{(1-\la z,u\ra)^{n+1}}$. Then $U_uf|_{m\varphi_u(A)}=0$. Applying Lemma \ref{lem: general decomposition lemma} to $F=\F_u$ and $\varphi_u(A)$, we can find $F_\alpha$ such that
    \[
    \int_{\E_{u3}}|U_uf-\sum_{|\alpha|=m_j}\PPP_\alpha F_\alpha|^2dv\leq C\epsilon_{R}^2\int_{\F_{u}}|U_uf|^2dv
    \]
    and
    \[
    \int_{\E_{u3}}\bigg(\sum_{|\alpha|=m_j}|\PPP_\alpha F_\alpha|\bigg)^2dv\leq C\int_{\F_{u}}|U_uf|^2dv.
    \]
    Here $\PPP_\alpha=\PPP_{\pi_u|_{m\varphi_u(A)}, \alpha}$, i.e., $\sum_{|\alpha|=m}\PPP_\alpha(z)\overline{w^\alpha}=\PPP_{\pi_u|_{\varphi_u(A)}}^m(z,w)$.
    Therefore
    \begin{eqnarray*}
    	&&\int_{E_{u3}}|f(z)-\sum_{|\alpha|=m}\PPP_\alpha\circ\varphi_u(z)F_\alpha\circ\varphi_u(z)k_u(z)|^2dv(z)\\
    	&=&\int_{\E_{u3}}|U_uf-\sum_{|\alpha|=m}\PPP_\alpha F_\alpha|^2dv(z)\\
    	&\leq&C\epsilon_{R}^2\int_{\F_{u}}|U_uf|^2dv(z)\\
    	&=&C\epsilon_{R}^2\int_{F_{u}}|f(z)|^2dv(z).
    \end{eqnarray*}
    Similarly, for each pair $\alpha, \beta, |\alpha|=|\beta|=m$,
    \[
    \int_{E_{u3}}|\PPP_\beta\circ\varphi_u(z) F_\alpha\circ\varphi_u(z)k_u(z)|^2dv(z)\leq C\int_{F_{u}}|f(z)|^2dv(z).
    \]
    
   Using the formula for $\varphi_u$, it is easy to verify that
	\begin{equation}\label{eqn: 6.5.2}
	\PPP_{\pi_u|_{\varphi_u(A)}}(\varphi_u(z), w)=-\frac{(1-|u|^2)^{1/2}}{1-\la z,u\ra}\PPP_{\pi_u|_{A}}(z,w).
	\end{equation}
	
	Let $\Gamma=\{\alpha=(\alpha_{d+1},\cdots,\alpha_n): |\alpha|=m\}$ and let $K=\#\Gamma$. By Lemma \ref{lem: w alpha linearly independent}, we can choose $K$ distinct points $\{w_i\}_{i=1}^K\subset \W$ (independent of $u$) such that the vectors $\{(w_i^\alpha)_{\alpha\in\Gamma}\}_{i=1}^K$ form a basis of $\CC^K$. Let $W$ be the $K\times K$ matrix $\big[\overline{w_i^\alpha}\big]_{\alpha\in\Gamma, i=1,\cdots,K}$. Then $W$ is invertible.
	Suppose $W^{-1}=(c_{i,\alpha})$.
	
	Denote $p_i(z)=P_{\pi_u|_{A}}^m(z,w_i)$ and  $\psi_i=\psi_{w_i}$, $i=1,\cdots,K$. Since $J$ is prime,
	by Theorem \ref{thm: global CDF are polynomials} and Hilbert's Nullstellensatz, each $p_i$ is a polynomial in $J^m$. Denote $N_0$ the upper bound of the degrees of the canonical defining functions of $A$, as in Theorem \ref{thm: global CDF are polynomials}. Then we have $\deg p_i\leq mN_0$. Set $N=mN_0$.
	Let 
	\[
	f_i(z)=\sum_\alpha \big(-\frac{(1-|u|^2)^{1/2}}{1-\la z,u\ra}\big)^{m}c_{i,\alpha}\psi_i(z)^{-m}F_\alpha\circ\varphi_u(z)k_u(z),
	\]
	then 
	\begin{eqnarray*}
		p_i(z)f_i(z)&=&\sum_\alpha \big(-\frac{(1-|u|^2)^{1/2}}{1-\la z,u\ra}\big)^{m}c_{i,\alpha}\PPP_{\pi_u|_A}(z,w_i)^{m}F_\alpha \circ\varphi_u(z)k_u(z)\\
		&=&\sum_{\alpha}c_{i,\alpha}\PPP_{\pi_u|_{\varphi_u(A)}}^m(\varphi_u(z),w_i)F_\alpha \circ\varphi_u(z)k_u(z)\\
		&=&\sum_\alpha\sum_\beta c_{i,\alpha}\overline{w_i^\beta}\PPP_\beta\circ\varphi_u(z)F_\alpha\circ\varphi_u(z)k_u(z).
	\end{eqnarray*}
	and
	\begin{eqnarray*}
		\sum_i p_i(z)f_i(z)&=&\sum_\alpha\sum_\beta\sum_i c_{i,\alpha}\overline{w_i^\beta}\PPP_\beta\circ\varphi_u(z)F_\alpha\circ\varphi_u(z)k_u(z)\\
		&=&\sum_\alpha\sum_\beta\delta_{\alpha,\beta} \PPP_\beta\circ\varphi_u(z)F_\alpha\circ\varphi_u(z)k_u(z)\\
		&=&\sum_\alpha \PPP_\alpha\circ\varphi_u(z)F_\alpha\circ\varphi_u(z)k_u(z).
	\end{eqnarray*}
	
		From this it is easy to see that the functions $p_i$ and $f_i$ satisfy inequalities (i) and (ii). 
	
	For each $\zeta\in A\cap\partial\bn$ we have found a neighborhood $U_\zeta:=U$ such that for any $u\in U_\zeta$, we have a decomposition with the stated properties. By compactness, we can cover $A\cap\partial\bn$ by finitely many such neighborhoods. This proves (1).

	Next we prove (2). Suppose $v\in\bn$ and $\beta(v, A)>5s'\log2$. By Lemma \ref{lem: E_u properties} (2), $F_v'=E_{v3}'\subset D(v,4s'\log2)$. Let $\xi\in A$ be a point such that $\beta(v,\xi)=\beta(v,A)$. Since
	$$\beta(\varphi_\xi(v), 0)=\beta(v,\xi)=\beta(v,A)>5s'\log2,$$
	by Lemma \ref{lem: beta rho} (2),
	\[
	1-|\varphi_\xi(v)|^2\leq 4e^{-2\beta(\xi,v)}<2^{-10s'+2}.
	\]
	By \cite[2.2.7]{Rudin}, the Euclidean diameter of $D(\varphi_\xi(v), 4s'\log2)$ do not exceed 
	$2^{-s'}$. Since $\varphi_\xi(F_v')\subset\varphi_\xi(D(v,4s'\log2))=D(\varphi_\xi(v),4s'\log2)$, the Euclidean diameter of $\varphi_\xi(F_v')$ do not exceed $2^{-s'}$.

	Also, we can assume that $v$ is close enough to $\partial\bn$ (equivalently, $k$ is large enough) so that $\xi$ is a regular point of $A$. A simple computation (cf. \cite{DWY2} Lemma 3.12) shows that $\varphi_\xi(v)$ is perpendicular to $T_0\varphi_\xi(A)=\varphi_\xi(T_\xi A+\xi)$ (cf. \cite[Proposition 2.4.2]{Rudin}).

	 Let $\delta>0$ be a sufficiently small constant to be determined later. Suppose $\zeta\in A\cap\partial\bn$. In the proof of (1), we have constructed an open neighborhood $U$ of $\zeta$ and an open subset $\W$ in $\CC^{n-d}$ such that for any $u\in U\cap A$ and any $w\in \W$,
	\[
	\psi_w(z)=\frac{P_{\pi_u|_A}(z,w)}{\PPP_{\pi_u|_A}(z,w)}
	\]
	is non-vanishing on $U_2$, where $U_2$ is another open neighborhood of $\zeta$. Here the canonical defining functions are constructed based on the basis $\{e_{u,1},\cdots, e_{u,n}\}$ as in the proof of (1). For $v$ close enough to $\zeta$ we can assume that $u\in U$ and $F_v'\subset U_2$. Thus by shrinking $U$ we will have that $\psi_w(z)$ is non-vanishing on $F_v'$ for any $v\in U$.

	Notice that although $\W$ depends on our choice of $\delta$, from the proof of Theorem \ref{thm: local CDF} (which is used in the proof of (1)), by shrinking the set $U$, we can always ensure that the volume  of $\Pi(\W)$ is greater than half of the volume of $\mathbb{P}_{n-d-1}$. Here $\Pi$ denotes the canonical map from $\CC^{n-d}\backslash\{0\}$ to $\mathbb{P}_{n-d-1}$. 
	
	On the other hand, from the previous argument, we know that the Euclidean diameter of $\varphi_\xi(F_v')$ is less than $2^{-s'}$, $\varphi_\xi(v)$ is perpendicular to $\varphi_\xi(T_\xi A+\xi)$, and that $1-|\varphi_\xi(v)|^2\leq 4e^{-2\beta(\xi,v)}<2^{-10s'+2}$. By Lemmas \ref{lem: NT close to T}, \ref{lem: Z close to NT} and our construction in (1), for $v$ close enough to $\partial\bn$, we will have 
	\[
	\dist_H(\varphi_\xi(A)\cap D(0, 5s\log2), L_\xi\cap D(0,5s\log2))\leq 2\delta.
	\]
	Denote $\W_1=\{w\in\CC^{n-d}: \PPP_{\pi_\xi|_{\varphi_\xi(A)}}(z,w)\text{ is non-vanishing on }\varphi_\xi(F_v')\}$.
	For $s'$ large enough and $\delta$ small enough, we will have that the volume of $\Pi(\W_1)$ is greater than half of the volume of $\mathbb{P}_{n-d-1}$. Choose such a $\delta$. Then $\W\cap\W_1$ is non-empty. Choose a $w_0\in\W\cap\W_1$. 
	Let $U$ be the open neighborhood of $\zeta$ determined by $\delta$. Then for any $v\in U$, by \eqref{eqn: 6.5.2} and the fact that $w_0\in\W_1$, $\PPP_{\pi_\xi|_A}(z,w_0)$ is non-vanishing on $F_v'$. Since $w_0\in\W$, $\frac{P_{\pi_\xi|_A}(z,w_0)}{\PPP_{\pi_\xi|_A}(z,w_0)}$ is also non-vanishing on $F_v'$. Thus $P_{\pi_\xi|A}(z,w_0)$ is non-vanishing on $F_v'$. By Theorem \ref{thm: global CDF are polynomials} and Hilbert's Nullstellensatz, $P_{\pi_\xi|_A}(z,w_0)$ is a polynomial of degree less than $N$ in $J$. Thus $p=P_{\pi_\xi|_A}^m(z,w_0)\in J^m$ is non-vanishing on $F_v'$ and $\deg p\leq N$. This proves (2).
	
	\end{proof}
	
	We are ready to prove Lemma \ref{lem: decomposition on Fu}.
	
	\begin{proof}[\textbf{Proof of Lemma \ref{lem: decomposition on Fu}}]
		Notice that the varieties $Z_j$ are disjoint in a neighborhood of $\partial\bn$. If $\zeta\in Z_j\cap\partial\bn$, then for each $i\neq j$, there exists $p_i\in I_i^{m_i}$ such that $p_i(\zeta)\neq0$. Thus for each $\zeta\in Z_j\cap\partial\bn$, we can find a neighborhood $V_\zeta$ and $k-1$ polynomials $p_i\in I_i^{m_i}$, $i\neq j$ such that $p_i$ are non-vanishing on $V_\zeta$. Choose finitely many open sets $V_\zeta$ that cover $Z\cap\partial\bn$. Suppose $F_u\subset V_\zeta$, where $\zeta\in Z_j\cap\partial\bn$. For $f\in I$, let $P_l$, $f_l$ be the functions constructed in Lemma \ref{lem: u decomposition}, for $I_j^{m_j}$. Then we can simply replace $P_l$ with $p_l\Pi_{i\neq j}p_i$ and $f_l$ with $\frac{f_l}{\Pi_{i\neq j}p_j}$. This proves (1). The proof for (2) is similar. 
	\end{proof}
	
	We are ready to prove Theorem \ref{thm: Smooth implies ASD}.
	\begin{proof}[\textbf{Proof of Theorem \ref{thm: Smooth implies ASD}}]
		For any $\epsilon>0$ sufficiently small, let $C$ be the constant in Lemma \ref{lem: decomposition on Fu}. Choose $R_0>0$ so that $C\epsilon_{R_0-3}^2<\epsilon$. Let $\{E_u\}_{u\in\LL}$, $\{E_v'\}_{v\in\LL'}$, $\{F_u\}_{u\in\LL}$, $\{F_v'\}_{v\in\LL'}$ be determined by $R$ as in Section \ref{section: a covering lemma}. Let $K$, $N$ be the positive integers in Lemma \ref{lem: decomposition on Fu}. Let $\delta=\sqrt{1-2^{-2m(K+1)}}$. Finally, let $\{E_i\}=\{E_u\}_{u\in\cup_{k\geq K}\LL_k}\cup\{E_v'\}_{v\in\cup_{k\geq10K}\LL_k'}$. The polynomials $p_{ij}$ will be the corresponding polynomials in Lemma \ref{lem: decomposition on Fu}. The conditions (1) and (2) in Hypothesis 1 follow from Lemma \ref{lem: E_u properties}, where we take $M=2^{10n+3}.$ The conditions (3) and (4)(i)(ii)(iv) follow from Lemma \ref{lem: decomposition on Fu}. By Lemma \ref{lem: E_u properties} (2), $(1-|\lambda|^2)$ is comparable to $(1-|u|^2)$ for $\lambda\in F_u$ and  $(1-|\lambda|^2)$ is comparable to $(1-|v|^2)$ for $\lambda\in F_v'$. From this, condition (4)(iii) follows immediately.
		
		Therefore $I$ satisfies Hypothesis 1. The rest of the theorem follow from Theorem \ref{thm: ASD}. This completes the proof.
	\end{proof}

	\section{Concluding Remarks}
	In this paper, we have provided a unified proof for most known results of the Arveson-Douglas Conjecture. In fact, we have proved the stronger result that the submodules under our consideration have the asymptotic stable division property. We raise the following question.
	
	\medskip
	\noindent\textbf{Question}: Suppose $I$ is an ideal in $\poly$. Find sufficient conditions for $I$ to have the asymptotic stable division property. Find sufficient conditions for $I$ to have the asymptotic stable division property with generating elements $\{h_i\}$ being polynomials of uniformly bounded degrees. 
	
	\medskip
	By Theorem \ref{thm: ASD to EN}, a positive result on this question will lead to a positive result of the Arveson-Douglas Conjecture. 
	We would also like to explore other applications of the asymptotic stable division property, for example, in index theory.

	\medskip
	The techniques we have developed in this paper are aimed at getting more general results. 
	For the next step, we plan to consider the following examples.
	\begin{itemize}
		\item[(1)] Arbitrary union of smooth, transversal varieties.
		\item[(2)] Varieties with certain type of singular points on $\partial\bn$. For example, singular points $\zeta$ with tangent cones being linear subspaces. This will cover the classic example of singular point, $z_1^2=z_2^3$ at $0$.
	\end{itemize}
Tools, for example, from \cite{Chi} and \cite{LT04} can be useful.
	
	We will also use the techniques to study the Arveson-Douglas Conjecture in connection with the $L^2$-extension problem \cite{Zho}. The covering constructed in Section \ref{section: a covering lemma} can be useful in constructing a holomorphic extension.

	\section{Appendix}\label{appendix}
	
	For an algebraic set $A\subset\cn$ with pure dimension, we can show that the functions $P_\pi$ and $P_{\pi,\alpha}$ in Definition \ref{defn: CDF} are polynomials in $z$ and $\bar{w}:=(\bar{w}_{p+1},\cdots,\bar{w}_n)$. Let us first consider a simple case.
	\begin{lem}\label{lem: global CDF, codimension 1}
		Suppose $A\subset\cn$ is an algebraic set of pure dimension $n-1$, and suppose 
		\[
		\pi: A\to \CC^{n-1}, z\mapsto z':=(z_1,\cdots, z_{n-1})
		\]
		is proper. Then $P_\pi(z,w)$ and $P_{\pi,\alpha}(z)$ are polynomials in $z$ and $\bar{w}$. 
	\end{lem}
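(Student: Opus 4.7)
Since the pure $(n-1)$-dimensional algebraic set $A$ is a hypersurface and $\poly$ is a UFD, the radical ideal $I(A)$ is principal; pick a squarefree generator $f\in\poly$. Viewing $f$ as a polynomial in $z_n$ with coefficients in $\CC[z_1,\ldots,z_{n-1}]$,
\[
f(z',z_n)=a_0(z')\,z_n^{k}+a_1(z')\,z_n^{k-1}+\cdots+a_k(z'),\qquad a_0\not\equiv0,
\]
the integer $k$ is the degree of $f$ in $z_n$, and will coincide with the generic number of sheets of $\pi|_A$. The plan is to show that the properness of $\pi|_A$ forces $a_0$ to be a nonzero constant. Once this is established, for every $z'\in\CC^{n-1}$ the monic polynomial $f(z',z_n)/a_0$ has as its roots, counted with multiplicity, exactly the fiber coordinates $a_1(z'),\ldots,a_k(z')$, so that
\[
\prod_{i=1}^{k}\bigl(z_n-a_i(z')\bigr)=\frac{f(z',z_n)}{a_0}
\]
holds as an identity of polynomials in $z$. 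Since in this codimension-one setting $w\in\CC$ and the only multi-index is $\alpha=(k)$, it follows at once that $P_{\pi,\alpha}(z)=f(z)/a_0$ and $P_\pi(z,w)=f(z)\,\bar w^{\,k}/a_0$ are polynomials in $z$ and $\bar w$.

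The main obstacle, and the only nontrivial step, is showing that $a_0$ is a nonzero constant. I argue by contradiction. Suppose $a_0(z'_0)=0$ for some $z'_0\in\CC^{n-1}$. By properness the fiber $\pi^{-1}(z'_0)\cap A$ is compact, hence finite, so $f(z'_0,\cdot)$ is a nonzero polynomial in $z_n$; therefore some coefficient $a_j(z'_0)$ with $j\geq1$ is nonzero. Choose a sequence $z'_i\to z'_0$ with $a_0(z'_i)\neq 0$, possible because $\{a_0=0\}$ is nowhere dense. Then $a_j(z'_i)/a_0(z'_i)\to\infty$, so the coefficients of the monic polynomial $f(z'_i,\cdot)/a_0(z'_i)$ become unbounded; by the Cauchy bound for polynomial roots, at least one root $\zeta_i$ of $f(z'_i,\cdot)$ must satisfy $|\zeta_i|\to\infty$. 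The points $(z'_i,\zeta_i)\in A$ then project into the compact set $\{z'_0\}\cup\{z'_i\}_{i}\subset\CC^{n-1}$ yet leave every compact subset of $\cn$, contradicting properness of $\pi|_A$. Since a polynomial on $\CC^{n-1}$ that vanishes nowhere must be a nonzero constant, this proves $a_0\in\CC^{\times}$ and completes the proof.
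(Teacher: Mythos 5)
Your proof is essentially correct and the central idea---that properness forces the leading $z_n$-coefficient $a_0$ to be a nonzero constant---is the right one, but you take a genuinely different route from the paper. The paper first reduces to an irreducible component, invokes Hartshorne to get an irreducible defining polynomial $Q$, identifies $\deg_{z_n} Q$ with the number of sheets via a branch-locus analysis, establishes the identity $Q = q_k P_{\pi,k}$, and then deduces $q_k$ is never zero because a zero would force an entire line into $A$ (an infinite fiber). You instead work with a squarefree generator $f$ of $I(A)$ directly, and you force $a_0$ to be nonvanishing by a quantitative escaping-roots argument: if $a_0(z_0')=0$ while some $a_j(z_0')\neq0$, then along a sequence $z_i'\to z_0'$ with $a_0(z_i')\neq0$ the ratios $a_j(z_i')/a_0(z_i')$ blow up, so by the relation between coefficients and elementary symmetric functions of the roots, some root of $f(z_i',\cdot)$ escapes to infinity over a compact base, contradicting properness. (The label ``Cauchy bound'' is slightly off---what you actually use is the Vieta/elementary-symmetric-function bound $|c_j|\le\binom{k}{j}(\max_i|\zeta_i|)^j$---but the estimate itself is correct.) Your argument is more elementary and avoids both the reduction to the irreducible case and the appeal to Hartshorne.

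There is, however, one genuine gap. You assert that $k=\deg_{z_n} f$ ``will coincide with the generic number of sheets,'' and you need this for the final identification: the canonical defining function $P_{\pi,\tilde k}(z)=\prod_{i=1}^{\tilde k}(z_n-b_i(z'))$ is built from the $\tilde k$ \emph{distinct} fiber points, whereas $f(z',z_n)/a_0$ is a monic degree-$k$ polynomial whose roots are those fiber points \emph{with multiplicity}. The two agree only if $\tilde k = k$, i.e.\ if $f(z',\cdot)$ has simple roots for generic $z'$. This is true, but it requires an argument: since $\pi|_A$ is proper, every irreducible factor of $f$ has positive $z_n$-degree, so each factor is separable over $\CC(z')$ and distinct factors have nonvanishing resultant, whence $\mathrm{disc}_{z_n}(f)$ and the pairwise resultants are not identically zero and the fiber over generic $z'$ consists of exactly $k$ simple roots. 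The paper handles exactly this point via the branch locus $B$ and the irreducibility of $Q$; you should supply the corresponding step before concluding $P_{\pi,k}=f/a_0$.
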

	\begin{proof}
		In this case, there is only one canonical defining function. If $\pi$ is $k$-sheeted, then $\alpha=k$, and 
		\[
		P_\pi(z,w_n)=P_{\pi,k}(z)\bar{w}_n^k.
		\]
		It suffices to show that $P_{\pi,k}$ is a polynomial. By definition, $P_{\pi,k}$ is the Weierstrass polynomial determined by $\pi$.
		
		The algebraic set $A$ decomposes into finitely many irreducible algebraic sets. It is easy to see that $P_{\pi,k}$ is just the product of the canonical defining functions of the irreducible components. Without loss of generality, we can assume $A$ is itself irreducible. 
		
		By \cite[Proposition 1.13]{Har}, there is an irreducible polynomial $Q(z)$ such that $A=Z(Q)$. Clearly $\deg_{z_n}Q\geq k$. Write $l=\deg_{z_n}Q$ and write $Q$ as a polynomial in $z_n$ with coefficients in $\CC[z_1,\cdots,z_{n-1}]$,
		\[
		Q(z)=q_l(z')z_n^l+\cdots.
		\]
		Consider the set
		\[
		B=\{z\in A: q_l(z')= 0,\text{ or } \partial_nQ(z)=0\}.
		\]
		$\pi(B)$ is the set of points $z'\in\CC^{n-1}$ such that $Q(z',\cdot)$ do not have $l$ distinct simple roots. Since $Q$ is irreducible, $B$
		is an analytic subset of dimension $\leq n-2$. By \cite[Proposition 3.3.2]{Chi}, $\pi(B)\subset\CC^{n-1}$ is an analytic subset of dimension $\leq n-2$. Thus $\CC^{n-p}\backslash\pi(B)$ is dense in $\CC^{n-1}$. If $l>k$, for any $z'\in\CC^{n-1}\backslash\pi(B)$, $\pi^{-1}(z')$ contains $l>k$ distinct points, a contradiction. Thus $\deg_{z_n}Q=k$.
		Also, if $\deg q_k>0$, then for $z'\notin\sigma$, comparing the two polynomials in $z_n$,
		\[
		Q(z',z_n)=q_k(z')z_n^k+\cdots
		\] 
		and
		\[
		P_{\pi,k}(z',z_n)=(z_n-a_1(z'))\cdots(z_n-a_k(z'))
		\]
		we get $Q(z',z_n)=q_k(z')P_{\pi,k}(z',z_n)$, $z'\notin\sigma, z_n\in\CC$. Since $\sigma$ is nowhere dense, $Q\equiv q_kP_{\pi,k}$. But then if $q_k(z')=0$ at some $z'$, we will have infinitely many points on the fiber $\pi^{-1}(z')$, a contradiction. Thus $q_k$ is a constant. Without loss of generality, we assume $q_k\equiv1$. Then $P_{\pi,k}=Q$ is a polynomial. This completes the proof.
	\end{proof}

\begin{lem}\label{lem: w alpha linearly independent}
	For any open set $U\subset\bn$ and any finite collection of indexes 
	\[
	F\subset\{\alpha=(\alpha_1,\cdots,\alpha_n): \alpha_i\in\mathbb{N}\},
	\]
	let $K=\# F$. Then there exists $K$ distinct points $\{w_i\}_{i=1}^k$ in $U$ such that the vectors
	$\{(w_i^\alpha)_{\alpha\in F}\}_{i=1}^K$ in $\CC^K$ are linearly independent.
\end{lem}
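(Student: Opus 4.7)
The plan is to reformulate the desired statement in terms of the evaluation map $\Phi: U \to \CC^K$, $w \mapsto (w^\alpha)_{\alpha \in F}$, and to show that the image $\Phi(U)$ spans $\CC^K$. Once this is done, we can pick out $K$ vectors from $\Phi(U)$ that form a basis of $\CC^K$; the corresponding $K$ points in $U$ are then automatically distinct (if two of them coincided, the corresponding vectors would be equal and hence not linearly independent), giving the required configuration.

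To see that $\Phi(U)$ spans $\CC^K$, I would argue by contradiction. If $\Phi(U)$ were contained in a proper linear subspace of $\CC^K$, there would exist a nonzero linear functional on $\CC^K$, say given by coefficients $(c_\alpha)_{\alpha\in F}$ not all zero, vanishing on $\Phi(U)$. This means $\sum_{\alpha\in F} c_\alpha w^\alpha = 0$ for every $w\in U$. Since $U$ is a nonempty open subset of $\bn$ and the function $\sum_{\alpha\in F} c_\alpha z^\alpha$ is a polynomial, the identity theorem forces it to vanish identically on $\cn$. But distinct monomials are linearly independent in $\poly$, so all $c_\alpha$ must be zero, a contradiction.

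Having established that $\Phi(U)$ spans $\CC^K$, I extract the $K$ points inductively: pick $w_1\in U$ with $\Phi(w_1)\neq 0$ (such a point exists because $\Phi(U)$ spans, hence is not $\{0\}$); then, supposing $w_1,\ldots,w_j$ have been chosen with $\Phi(w_1),\ldots,\Phi(w_j)$ linearly independent for $j<K$, the orthogonal complement (in the algebraic sense) of $\mathrm{span}\{\Phi(w_1),\ldots,\Phi(w_j)\}$ is a proper subspace of $\CC^K$, so the spanning property guarantees a $w_{j+1}\in U$ with $\Phi(w_{j+1})$ outside this span. After $K$ steps we obtain the desired $w_1,\ldots,w_K$. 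There is no real obstacle here — the only subtlety is justifying the linear independence of distinct monomials, which follows from the fact that a polynomial that vanishes on a nonempty open set in $\cn$ is identically zero.
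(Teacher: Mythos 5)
Your argument is correct and follows essentially the same route as the paper: both proofs reduce the claim to showing that the image of the evaluation map $w\mapsto(w^\alpha)_{\alpha\in F}$ spans $\CC^K$, and both establish this by observing that a linear functional annihilating the image would yield a polynomial vanishing on the open set $U$, hence identically zero, forcing all coefficients to vanish. The only cosmetic difference is that you spell out the inductive extraction of a basis (and the automatic distinctness of the chosen points), which the paper leaves implicit after the sentence ``It suffices to show that $L=\CC^K$.''
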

\begin{proof}
	Let 
	\[
	L=\mathrm{span}\{(w^\alpha)_{\alpha\in F}: w\in U\}.
	\]
     It suffices to show that $L=\CC^K$. Otherwise, choose a non-zero vector $(a_\alpha)_{\alpha\in F}$ that is perpendicular with $L$. Define
     \[
     f(w)=\sum_{\alpha\in F}\overline{a_\alpha}w^\alpha.
     \]
     Then $f$ is a analytic polynomial that vanishes on the open set $U$. Therefore $f$ is identically zero. So $a_\alpha=0, \forall \alpha\in F$, a contradiction. This completes the proof.
	\end{proof}
	
	\begin{thm}\label{thm: global CDF are polynomials}
		Suppose $A\subset\cn$ is an algebraic set of pure dimension $p$. Suppose the projection
		\[
		\pi: A\to\CC^p, z\mapsto z':=(z_1,\cdots,z_p)
		\]
		is proper. Then the functions  $P_\pi(z,w)$ and $P_{\pi,\alpha}(z)$ are polynomials in $z$ and $\bar{w}$. Moreover, there exists a positive integer $N$, depending only  on $A$, such that for any choice of basis and any proper projection $\pi$, the degrees of $P_{\pi}(z,w)$ (in $z$) and $P_{\pi,\alpha}$ are less than $N$.
	\end{thm}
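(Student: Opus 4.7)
The plan is to reduce to the irreducible case and then extract both the polynomiality and the uniform degree bound from the basic fact that a proper projection of an algebraic variety corresponds to a finite (integral) ring extension. First I would decompose $A=A_1\cup\cdots\cup A_s$ into irreducible components. Each $A_\ell$ has pure dimension $p$, and $\pi$ restricts to a proper projection on each component. On a generic fiber we have $\pi^{-1}(z')\cap A = \bigsqcup_\ell \pi^{-1}(z')\cap A_\ell$, so
\[
P_{\pi|_A}(z,w) = \prod_{\ell=1}^{s} P_{\pi|_{A_\ell}}(z,w)
\]
as polynomials in $\bar{w}$. If each factor is a polynomial in $(z,\bar{w})$ of degree at most $N_\ell$, so is the product of degree at most $\sum_\ell N_\ell$; thus it suffices to treat irreducible $A$.

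Assume now $A$ irreducible, so that $I(A)\subset\mathbb{C}[z_1,\ldots,z_n]$ is prime. Properness of $\pi|_A$ is equivalent to the ring extension $\mathbb{C}[z_1,\ldots,z_p]\hookrightarrow\mathbb{C}[z_1,\ldots,z_n]/I(A)$ being finite (integral), and the sheet number equals $k=[\mathbb{C}(A):\mathbb{C}(z_1,\ldots,z_p)]$. For each $\bar{w}=(\bar{w}_{p+1},\ldots,\bar{w}_n)$ the linear form $\ell_w = \sum_{i=p+1}^n \bar{w}_i z_i$ is integral over $\mathbb{C}[z_1,\ldots,z_p]$, so it satisfies a monic polynomial $\Phi_w(z',t)$ of degree $k$ in $t$ whose coefficients are polynomials in $z'$ and depend polynomially on $\bar{w}$. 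On a generic fiber the values $\ell_w(a_1(z')),\ldots,\ell_w(a_k(z'))$ are precisely the roots of $\Phi_w(z',\cdot)$.

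Next I would assemble $P_{\pi|_A}(z,w)$. Form the auxiliary polynomial
\[
F(z',t,\bar{w}) \;:=\; \prod_{j=1}^{k}\bigl(t-\ell_w(a_j(z'))\bigr) \;=\; \Phi_w(z',t),
\]
which by the previous step is a polynomial in $(z',t,\bar{w})$. Substituting $t=\ell_w(z'')=\sum_{i=p+1}^n\bar{w}_i z_i$ gives
\[
P_{\pi|_A}(z,w) \;=\; F\bigl(z',\ell_w(z''),\bar{w}\bigr),
\]
a polynomial in $(z,\bar{w})$ that is homogeneous of degree $k$ in $\bar{w}$. Reading off the coefficient of $\bar{w}^\alpha$ gives the desired polynomial $P_{\pi|_A,\alpha}(z)$.

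For the uniform degree bound, let $d=\deg(\overline{A})$ with $\overline{A}\subset\mathbb{P}^n$ the projective closure. By Bezout every finite linear projection $\pi:A\to\mathbb{C}^p$ has sheet number $k\leq d$, and the integrality polynomial of any linear form on $A$ over $\mathbb{C}[z_1,\ldots,z_p]$ has total degree bounded by a constant depending only on $d$ and $n$; this parallels Lemma \ref{lem: global CDF, codimension 1}, where the Weierstrass polynomial was read off from an irreducible defining polynomial by eliminating the fiber variable. Consequently $\deg F$, and hence $\deg P_{\pi|_A,\alpha}$, is bounded uniformly over all orthonormal bases by a single $N=N(A)$. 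The main obstacle is establishing this uniform bound cleanly in arbitrary codimension: the polynomiality in the two preceding paragraphs is routine integral closure theory, but controlling the degrees of the integrality polynomials via Bezout or resultant-elimination arguments is the actual work, and requires keeping track of how the projective closure $\overline{A}$ meets the hyperplane at infinity as the projection direction varies.
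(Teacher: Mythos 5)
Your approach is genuinely different from the paper's: you work purely commutative-algebraically, reducing to irreducible $A$ and then invoking integral closure of $\mathbb{C}[z']$ in $\mathbb{C}(A)$ to see that the characteristic (norm) polynomial of the linear form $\ell_w$ has polynomial coefficients in $z'$ and, being built from elementary symmetric functions of $\ell_w(a_j(z'))$, depends polynomially on $\bar w$. That correctly gives $P_{\pi}(z,w)=F(z',\ell_w(z''),\bar w)\in\mathbb{C}[z,\bar w]$ and hence the polynomiality of all $P_{\pi,\alpha}$. The paper instead factors $\pi=\pi_w'\circ\pi_w$ through a one-codimensional projection $\pi_w\colon z\mapsto (z',\langle z'',w\rangle)$, applies the codimension-one Lemma \ref{lem: global CDF, codimension 1} to $\pi_w'$, and then recovers all the $P_{\pi,\alpha}$ from finitely many specializations $P_\pi(\cdot,w_j)$ via the invertibility of the Vandermonde-type matrix $[\overline{w_j^\alpha}]$ — using Lemma \ref{lem: w alpha linearly independent} exactly as you implicitly would. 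Both routes share the same final interpolation step; what differs is how polynomiality of each $P_\pi(\cdot,w)$ is established (Chirka's image-of-algebraic-set theorem plus the Weierstrass-polynomial identification vs. normality of $\mathbb{C}[z']$).

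However, as you yourself flag, you do not actually close the uniform degree bound $N$. This is not a decorative caveat: it is precisely the part of the theorem that the surrounding sections use quantitatively (the generators $\{h_i\}$ must be polynomials of uniformly bounded degree), so it cannot be left as a remark that ``Bezout or resultant-elimination'' will handle it, especially since the bound must be uniform over all orthonormal bases and all admissible proper projections $\pi$, and one must verify that changing the projection direction does not let the degree of the Chow/norm form escape. The paper closes this gap by pointing to the explicit constructions in Chirka (Theorem 3.2 and Proposition 3.3.2) which produce the image-variety defining polynomials from a fixed generating set of $I(A)$, giving a bound $N$ that manifestly does not depend on the choice of projection. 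If you want to complete your route, the cleanest fix is probably to replace the appeal to Bezout by noting that $F(z',t,\bar w)$ is, up to sign, the resultant in an auxiliary variable of the defining polynomials of $A$ specialized along the fiber direction; the degree of a resultant in the remaining variables is bounded by an explicit polynomial expression in the degrees of the inputs, which depend only on $A$ and not on the basis or on $w$. Until that is written out, your proof has a genuine gap exactly at the step you identify.
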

	\begin{proof}
		Let $\sigma\subset\CC^p$ be the critical set of $\pi$. Suppose $\pi$ is $k$-sheeted. Fix any $z'\notin\sigma$, we have $\pi^{-1}(z')=\{(z',a_1(z')),\cdots,(z',a_k(z'))\}$, where
		$\{a_1(z'),\cdots,a_k(z')\}$ are $k$ distinct points in $\CC^{n-p}$. The set 
		\[
		\W:=\{w\in\CC^{n-p}:  \la a_1(z'), w\ra,\cdots,\la a_k(z'),w\ra \text{ are distinct at least for one }z'\in U'\},
		\]
		is open in $\CC^{n-p}$. Fix any $w\in \W$. Consider the projections
		\[
		\pi_w:A\to\CC^{p+1}, z\mapsto(z',\la z'',w\ra)
		\] 
		and 
		\[
		\pi_w':\pi_w(A)\subset\CC^{p+1}\to\CC^p, (z',\la z'',w\ra)\to z'.
		\]
		Then $\pi=\pi_w'\circ\pi_w$. By \cite[3.1 (2)]{Chi}, both $\pi_w$ and $\pi_w'$ are proper maps. By \cite[Theorem 3.2, Proposition 3.3.2]{Chi}, $\pi_w(A)$ is a pure algebraic set in $\CC^{p+1}$ of dimension $p$, and $\pi_w'$ is a $k$-sheeted analytic cover. By Lemma \ref{lem: global CDF, codimension 1}, $P_{\pi_w'}((z',\lambda),\xi)$ is a polynomial in $z'$, $\lambda$ and $\bar{\xi}$. Also, from the proofs of \cite[Theorem 3.2, Proposition 3.3.2]{Chi}, the degree (in $(z',\lambda)$) of $P_{\pi_w'}((z',\lambda),\xi)$ has a upper bound determined by any set of generators of the ideal $\{p\in\poly: p|_A=0\}$, which we denote by $N$. Checking by definition, we have the equation
		\[
		P_\pi(z,w)=P_{\pi_w'}((z',\la z'',w\ra),1).
		\]
		Thus for any $w\in \W$, $P_\pi(z,w)$ is a polynomial in $z$ and $\deg_z P_{\pi}(z,w)\leq N$. Fix an order of the set $\Gamma=\{\alpha=(\alpha_{p+1},\cdots,\alpha_n): |\alpha|=k\}$, and let $K=\#\Gamma$. By Lemma \ref{lem: w alpha linearly independent}, we can choose $K$ distinct points $\{w_j\}_{j=1}^K\subset \W$ such that the $K\times K$ matrix $W=(\overline{w_j^{\alpha}})_{\alpha\in\Gamma, j=1,\cdots, K}$ is invertible. From the equations
		\[
		P_\pi(z,w_j)=\sum_{\alpha\in\Gamma}P_{\pi,\alpha}(z)\overline{w_j^\alpha},\quad j=1,\cdots,K,
		\]
		we can solve $P_{\pi,\alpha}$ as linear combinations of $\{P_\pi(z,w_j)\}_{j=1}^K$. Thus $P_{\pi,\alpha}$ are polynomials in $z$, and then $P_\pi(z,w)$ is a polynomial in $z$ and $\bar{w}$. Moreover, $\deg_z P_\pi(z,w)\leq N$, $\deg P_{\pi,\alpha}(z)\leq N$. 
		 This completes the proof.
	\end{proof}
	
	Let $\mathbb{P}_n$ denote the $n$-dimensional complex projective space. Then $\cn$ can be viewed as a subset of $\mathbb{P}_n$ via the natural embedding 
	\[
	\cn\to\mathbb{P}_n,\quad(z_1,\cdots,z_n)\mapsto[1,z_1,\cdots,z_n].
	\]
	For any algebraic set $A\subset\cn$, its closure $\overline{A}$ in $\mathbb{P}_n$ is an analytic subset of $\mathbb{P}_n$. For a $p$-dimensional linear space $L\subset\cn$, $\overline{L}$ is a $p$-dimensional linear space in $\mathbb{P}_n$. Its orthogonal complement in $\mathbb{P}_n$ is of dimension $n-p-1$.
	\[
	\overline{L}^\perp=\{[z_0,\cdots,z_n]:~(z_0,\cdots,z_n)\perp(1,w), w\in L\}.
	\]
	We have the following lemma \cite[7.3]{Chi}.
	
	\begin{lem} \label{lem: nonintersection implies proper}
		Let $A$ be a pure $p$-dimensional projective algebraic set in $\mathbb{P}_n$
		and let $L\subset\mathbb{P}_n$ be a complex $n-p-1$-dimensional plane not intersecting $A$. Then the projection $\pi: A\to L^\perp$ is proper.
	\end{lem}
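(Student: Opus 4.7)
My approach relies on the observation that properness is essentially automatic once one establishes that $A$ is compact, because any continuous map from a compact Hausdorff space to a Hausdorff space is proper. The substantive content lies in verifying that $\pi$ is well-defined and continuous on $A$.

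I would first note that $A$, being a closed subset of $\mathbb{P}_n$ (as an analytic subset), is compact because $\mathbb{P}_n$ is compact. Next, I would construct $\pi:\mathbb{P}_n\setminus L\to L^\perp$ explicitly. For $a\in\mathbb{P}_n\setminus L$, the projective span $\la L,a\ra$ has dimension $n-p$. Passing to the cones $\hat L,\hat L^\perp\subset\CC^{n+1}$ (of dimensions $n-p$ and $p+1$), positive-definiteness of the Hermitian form used to define $L^\perp$ gives $\hat L\cap\hat L^\perp=\{0\}$, whence $\hat L+\hat L^\perp=\CC^{n+1}$ by dimension count. For any lift $\hat a$ of $a$, it follows that $(\hat L+\CC\hat a)\cap\hat L^\perp$ has complex dimension $(n-p+1)+(p+1)-(n+1)=1$, so $\la L,a\ra\cap L^\perp$ consists of a single point, which we declare to be $\pi(a)$. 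Writing $\pi$ as a ratio of linear forms in homogeneous coordinates shows it is holomorphic, in particular continuous, on $\mathbb{P}_n\setminus L$.

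Finally, the hypothesis $L\cap A=\emptyset$ ensures that $\pi|_A$ is defined on all of $A$. For any compact $K\subseteq L^\perp$, the preimage $(\pi|_A)^{-1}(K)$ is closed in $A$ by continuity of $\pi$, hence compact as a closed subset of the compact set $A$. This is exactly the definition of properness.

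The main (indeed, only) obstacle is step two---setting up the geometric projection and confirming continuity---though even there the argument reduces to linear algebra plus positivity of the Hermitian form on $\CC^{n+1}$. Everything else is soft point-set topology, and I would not expect any further complications.
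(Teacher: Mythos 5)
Your proof is correct. The paper does not actually supply a proof of this lemma---it is simply quoted with a citation to Chirka's book \cite[7.3]{Chi}---so there is no ``paper's proof'' to compare against. Your argument is the natural self-contained one: a projective algebraic set is Zariski-closed in $\mathbb{P}_n$, hence closed in the Euclidean topology, hence compact; the hypothesis $L\cap A=\emptyset$ guarantees that the central projection $\pi:\mathbb{P}_n\setminus L\to L^\perp$ restricts to a well-defined continuous (indeed holomorphic, being linear in homogeneous coordinates) map on $A$; and a continuous map from a compact space to a Hausdorff space is automatically proper, since the preimage of a compact (hence closed) set is closed in a compact space. Your linear-algebra verification that $\la L,a\ra\cap L^\perp$ is a single point, using positive-definiteness of the Hermitian form to get $\hat L\cap\hat L^\perp=\{0\}$ and then a dimension count, is sound. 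It is perhaps slightly more than needed---one can more directly observe that the linear projection $\CC^{n+1}=\hat L\oplus\hat L^\perp\to\hat L^\perp$ has kernel exactly $\hat L$, so it descends to a holomorphic map $\mathbb{P}_n\setminus L\to L^\perp$---but both routes land in the same place. One point worth keeping in mind when you read how the lemma is actually invoked (e.g.\ in the proof of Lemma \ref{lem: u decomposition}): the paper applies it to conclude properness of an \emph{affine} orthogonal projection on an affine algebraic set $A$, deduced from non-intersection of the projective closures. That deduction uses the projective statement you proved together with the observation that, under the non-intersection hypothesis, no affine point of $\overline{A}$ can project to the hyperplane at infinity of $\overline{L}$; this additional step is standard but is not part of the lemma as stated, so your proof is complete for the statement as given.
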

	
	The following theorem will be used in the proof of Theorem \ref{thm: Smooth implies ASD}.
	\begin{thm}\label{thm: local CDF}
		Suppose $A\subset\cn$ is an algebraic set of pure dimension $p<n$ and $0\in A$. Assume the following.
		\begin{itemize}
			\item[(1)] Denote $L=\{(z',0):~z'\in\CC^p\}\subset\cn$ and suppose $\overline{L}^\perp\cap\overline{A}=\emptyset$. Therefore if we denote $\pi:\cn\to\CC^p$, $z\mapsto z':=(z_1,\cdots,z_p)$, then $\pi|_A$ is proper.
			\item[(2)] $U=U'\times U''$, where $0\in U'\subset\CC^p$ and $0\in U''\subset\CC^{n-p}$ are open sets. $\pi|_{A\cap U}$ is also proper. Moreover, $\pi^{-1}(0)\cap A\cap U=\{0\}$.
		\end{itemize}
		Then there exist open sets $0\in U_1'\subset U'$, $0\in U_1''\subset U''$, an open neighborhood $\mathcal{U}$ of the $n\times n$ identity matrix $I_{n\times n}$ in $U(n)$, and an open set $\W\subset\CC^{n-p}$ with the following properties. Denote $U_1=U_1'\times U_1''$. For any $l\in\mathcal{U}$, the projections $\pi|_{l(A)}$ and $\pi|_{l(A)\cap U_1}$ are proper. Moreover, for any $w\in \W$, the function
		\[
		\psi_w(z)=\frac{P_{\pi|_{l(A)}}(z,w)}{P_{\pi|_{l(A)\cap U_1}}(z,w)}
		\]
		is non-vanishing and holomorphic in $U_1$.
	\end{thm}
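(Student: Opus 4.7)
The plan is to exhibit $\psi_w$ as a single canonical defining function associated to the ``non-local'' part of $l(A)$ over $U_1'$. Since that non-local piece is a proper analytic cover of $U_1'$, the resulting function will be automatically holomorphic, and a generic choice of $w$ makes it non-vanishing on $U_1$. The content of the theorem thus reduces to producing a clean separation of sheets (local versus non-local) that survives small perturbations $l$ of the identity.

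To set this up, I would invoke hypothesis (2) together with properness of $\pi|_{A\cap U}$ to produce neighborhoods $0\in U_1'\subset U'$ and $0\in U_1''\subset U''$, with $\overline{U_1'\times U_1''}\subset U$, such that
\[
A\cap(\overline{U_1'}\times\partial U_1'')=\emptyset.
\]
Hausdorff-continuity of $l\mapsto l(A)\cap K$ on compact $K\subset\cn$ then supplies a neighborhood $\mathcal{U}$ of $I_{n\times n}$ with the same separation: $l(A)\cap(\overline{U_1'}\times\partial U_1'')=\emptyset$ for every $l\in\mathcal{U}$. After possibly shrinking $\mathcal{U}$, Lemma~\ref{lem: nonintersection implies proper} together with openness of $\overline{L}^\perp\cap\overline{l(A)}=\emptyset$ in $\mathbb{P}_n$ (for $l$ near the identity) guarantees that $\pi|_{l(A)}$ remains proper on $\mathcal{U}$. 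Writing $U_1=U_1'\times U_1''$, the separation lets me decompose
\[
\pi^{-1}(U_1')\cap l(A)=V_1^l\sqcup V_2^l,\qquad V_1^l:=l(A)\cap U_1,\quad V_2^l:=l(A)\cap\pi^{-1}(U_1')\setminus\overline{U_1},
\]
with both $\pi|_{V_i^l}:V_i^l\to U_1'$ proper analytic covers; the degrees $k_1,k_2$ are locally constant in $l$, hence independent of $l\in\mathcal{U}$ after further shrinking.

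Since, for $z\in\pi^{-1}(U_1')$, the preimage $\pi^{-1}(\pi(z))\cap l(A)$ partitions into its portions in $V_1^l$ and in $V_2^l$, the definition of the canonical defining function immediately yields the multiplicative identity
\[
P_{\pi|_{l(A)}}(z,w)=P_{\pi|_{V_1^l}}(z,w)\cdot P_{\pi|_{V_2^l}}(z,w),\qquad z\in\pi^{-1}(U_1'),\ w\in\CC^{n-p}.
\]
Recognizing $P_{\pi|_{V_1^l}}=P_{\pi|_{l(A)\cap U_1}}$, I obtain $\psi_w(z)=P_{\pi|_{V_2^l}}(z,w)$. As the canonical defining function of a proper $k_2$-sheeted cover of $U_1'$, this is a polynomial in $z''$ of degree $k_2$ with coefficients holomorphic in $z'\in U_1'$, and in particular is holomorphic on $U_1$. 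For non-vanishing, observe that $\pi^{-1}(0)\cap V_2^{I}$ is finite and each such $a=(0,a'')$ satisfies $a''\notin U_1''$, in particular $a''\neq0$. Choose $w_0\in\CC^{n-p}$ with $\la a'',w_0\ra\neq0$ for every $a\in\pi^{-1}(0)\cap V_2^{I}$, so that $P_{\pi|_{V_2^{I}}}(0,w_0)\neq0$. Joint continuity of the canonical defining functions in $z\in U_1$ and $l\in\mathcal{U}$ (reducing to continuity of the elementary symmetric functions of the sheets $a_i''(z')$ under perturbation) then allows me to shrink $U_1$ and $\mathcal{U}$ and select an open neighborhood $\W$ of $w_0$ such that $P_{\pi|_{V_2^l}}(z,w)\neq0$ for all $z\in U_1$, $l\in\mathcal{U}$, and $w\in\W$.

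The principal technical obstacle I anticipate is uniformity in $l$: the separation of sheets, the properness of $\pi|_{l(A)}$ and $\pi|_{l(A)\cap U_1}$, the constancy of the sheet count, and the non-vanishing must all persist simultaneously on a single neighborhood $\mathcal{U}$ of $I_{n\times n}$. This amounts to Hausdorff continuity of $l(A)$ under unitary perturbation, but requires some bookkeeping to keep the successive shrinkings of $U_1$ and $\mathcal{U}$ mutually consistent.
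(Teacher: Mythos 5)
You follow the same basic strategy as the paper: split $\pi^{-1}(U_1')\cap l(A)$ into the local part inside $U_1$ and the non-local part outside, observe that the canonical defining functions multiply across the disjoint pieces, and identify $\psi_w$ with the canonical defining function of the non-local part. The divergence, and the genuine gap, is in how non-vanishing is established.

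The paper does \emph{not} pick a single $w_0$ and appeal to continuity of canonical defining functions. It first fixes the finite set $\{(0,a_i)\}$ of non-local points in $\pi^{-1}(0)\cap A$, takes $\W$ with closure in $\{w:\la a_i,w\ra\neq0\}$, and then chooses neighborhoods $V_i\ni a_i$ and $U_1''\ni 0$ small enough that the \emph{elementary} estimate $\la\lambda-z'',w\ra\neq0$ holds for every $\lambda\in V_i$, $z''\in U_1''$, $w\in\W$. It then proves, using the cone-and-ball containment of affine algebraic sets from \cite[Theorem 7.4.2]{Chi} together with \cite[Corollary 4.2]{Chi}, that $\pi^{-1}(U_1')\cap l(A)\subset U_1\cup\bigcup_iU_1'\times V_i$ uniformly in $l\in\mathcal{U}$. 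Each factor $\la z''-a_j(z'),w\ra$ of $\psi_w$ is then nonzero by construction, with no perturbation-stability claim about canonical defining functions needed.

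Your non-vanishing step has two concrete problems. First, choosing $w_0$ so that $\la a_i,w_0\ra\neq0$ only guarantees $P_{\pi|_{V_2^I}}((0,0),w_0)\neq0$; for $z=(0,z'')$ with $z''\in U_1''$ you need $\la z''-a_i,w_0\ra\neq0$, which your choice does not give and which is exactly the compatibility between $U_1''$, the $V_i$'s and $\W$ that the paper builds in up front. Second, trying to recover this by ``shrinking $U_1$'' is circular: $V_2^l$ is defined relative to $U_1$, and shrinking $U_1''$ moves sheets from the local part to $V_2^l$, changing the function you are trying to control. (Shrinking only $U_1'$ is safe, but does not address the $z''$-dependence of the factors.) In addition, the joint continuity of $P_{\pi|_{V_2^l}}$ in $(z,l,w)$ is a nontrivial stability statement about canonical defining functions under perturbation of the underlying analytic set; it is plausible, but it is a claim that itself requires proof, and the paper's route is precisely engineered to avoid needing it. So your decomposition and the identification of $\psi_w$ are right, but the non-vanishing argument does not close without either reproducing the paper's upfront choice of $\W$, $V_i$, $U_1''$ or supplying a proof of the CDF-continuity claim and sorting out the $U_1''$-shrinking circularity.
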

	\begin{proof}
		Since $\pi|_A$ is proper, we know that $\pi^{-1}(0)\cap A$ consists of finitely many points. Suppose
		\[
		\big(\pi^{-1}(0)\cap A\big)\backslash\{(0,0)\}=\{(0,a_1),\cdots,(0,a_k)\},
		\]	
		where $a_i\in\CC^{n-p}$. Take an open set $\W\subset\CC^{n-p}$ whose closure is contained in the open set $\{w\in\CC^{n-p}: \la a_i,w\ra\neq0\}$. We can find open neighborhoods $a_i\in V_i\subset\CC^{n-p}$, $0\in U_1''\subset\CC^{n-p}$ such that 
		\[
		\la\lambda-z'',w\ra\neq0,\quad \forall i, \forall\lambda\in V_i, \forall z''\in U_1'', \forall w\in \W.
		\]
		We claim that there exist an open neighborhood $\mathcal{U}$ of $I_{n\times n}$ in $U(n)$ and an open set $0\in U_1'\subset U'$ with the following property. Denote $U_1=U_1'\times U_1''$. Then $\pi|_{l(A)}$ and $\pi|_{l(A)\cap U_1}$ are proper. Moreover,
		\[
		\pi^{-1}(U_1')\cap l(A)\subset U_1\cup\bigg(\bigcup_{i=1}^kU_1'\times V_i\bigg).
		\]
		By assumption, $\overline{L}^\perp\cap\overline{A}=\emptyset$. Therefore we can find an open neighborhood $\mathcal{U}$ of $I_{n\times n}$ in $U(n)$ such that $\forall l\in\mathcal{U}$, $\overline{l^{-1}(L)}^\perp\cap\overline{A}=\emptyset$. Thus the projection from $A$ onto $l^{-1}(L)$ is proper. Equivalently, $\pi|_{l(A)}$ is proper. Also, by the proof of \cite[Theorem 7.4.2]{Chi}, $A$ is contained in the union of a ball $B:=\{z:~|z|<R\}$ and a cone $K:=\{z: |z''|<C|z'|\}$.
		
		On the other hand, by \cite[Corollary 4.2]{Chi}, we can take $U_2'$ small enough so that
		\[
		\pi^{-1}(U_2')\cap A\subset \big(U_2'\times U_1''\big)\cup\bigg(\bigcup_{i=1}^kU_2'\times V_i\bigg).
		\]
		If we shrink $\mathcal{U}$, we can ensure that $\forall l\in\mathcal{U}$, $l^{-1}\pi^{-1}(U_2')\backslash B$ is outside the cone $K$. Then $l^{-1}\pi^{-1}(U_2')\cap A\subset B$. Then if we shrink $\mathcal{U}$ again, we can find $0\in U_1'\subset U_2'$ so that $\forall l\in\mathcal{U}$, $l^{-1}\pi^{-1}(U_1')\cap B\subset \pi^{-1}(U_2')$. Thus 
		\[
		l^{-1}\pi^{-1}(U_1')\cap A=l^{-1}\pi^{-1}(U_1')\cap B\cap A\subset \pi^{-1}(U_2')\cap A\subset \big(U_2'\times U_1''\big)\cup\bigg(\bigcup_{i=1}^kU_2'\times V_i\bigg).
		\]
		If we replace the right hand side with a compact neighborhood $\mathcal{N}$ of $A\cap \pi^{-1}(U_1')$ contained in $\big(U_2'\times U_1''\big)\cup\bigg(\bigcup_{i=1}^kU_2'\times V_i\bigg)$, then the same method will give us $l^{-1}\pi^{-1}(U_1')\cap A\subset\mathcal{N}$. Then we can shrink $\mathcal{U}$ again to ensure $l(\mathcal{N})\subset \big(U_2'\times U_1''\big)\cup\bigg(\bigcup_{i=1}^kU_2'\times V_i\bigg)$. Then we have $\forall l\in\mathcal{U}$,
		\[
		\pi^{-1}(U_1')\cap l(A)\subset \big(U_2'\times U_1''\big)\cup\bigg(\bigcup_{i=1}^kU_2'\times V_i\bigg).
		\]
		Then obviously, 
		\[
		\pi^{-1}(U_1')\cap l(A)\subset \big(U_1'\times U_1''\big)\cup\bigg(\bigcup_{i=1}^kU_1'\times V_i\bigg).
		\]
		This proves our claim.
		
		The open sets $U_1''$ and $V_i$ can be chosen to be disjoint. By \cite[3.1 (3)]{Chi}, $\pi|_{l(A)\cap U_1}$ is also proper. Let $z'\in U_1'$ be outside the critical sets of both projections. Then 
		\[
		\pi^{-1}(z')\cap l(A)=\{(z',b_1(z')),\cdots,(z',b_l(z')); (z',a_1(z')),\cdots,(z', a_k(z'))\}.
		\]
		Here $b_j(z')\in U_1''$, $a_j(z')\in\cup_i V_i$. By definition, for $z\in U_1$ and $w\in \W$,
		\[
		P_{\pi|_{l(A)}}(z,w)=\Pi_{j=1}^l\la z''-b_j(z'),w\ra\times\Pi_{j=1}^k\la z''-a_j(z'),w\ra= P_{\pi|_{l(A)\cap U_1}}(z,w)\times\psi_w(z),
		\]
		where
		\[
		\psi_w(z)=\Pi_{j=1}^k\la z''-a_j(z'),w\ra.
		\]
		From our construction, it is straightforward that $\psi_w$ is non-vanishing on $U_1$. This completes the proof.	
	\end{proof}
	
	\begin{thm}\label{thm: proper projections on tangent plane}
		Suppose $A$ is a $p$-dimensional irreducible affine algebraic set in $\cn$ and $0\in A$. Assume that $A\nsubseteq e_n^\perp$. Let $$\mathcal{G}_n=\{L\in G(p,n): e_n\in L\}$$ and $$\tilde{\mathcal{G}}_n=\{L\in \mathcal{G}_n: \overline{L}^\perp\cap\overline{A}=\emptyset\}.$$
		Here $G(p,n)$ is the Grassmannian.
		Then $\tilde{\mathcal{G}}_n$ is a dense open set in $\mathcal{G}_n$. Equivalently, let $$\tilde{\mathcal{U}}_n=\{l\in U(n-1): \mathrm{span}\{l(e_1),\cdots,l(e_{p-1}),e_n\}\in\tilde{\mathcal{G}}_n\}.$$
		Then $\tilde{\mathcal{U}}_n$ is dense in $U(n-1)$.
	\end{thm}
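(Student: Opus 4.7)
The plan is to prove openness by a standard incidence/properness argument, density by a dimension count on an incidence variety, and then to transfer the result from $\tilde{\mathcal{G}}_n$ to $\tilde{\mathcal{U}}_n$ via a continuous open surjection.

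For openness, I would form the incidence variety $\mathcal{I}=\{(L,z)\in\mathcal{G}_n\times\overline{A}: z\in\overline{L}^\perp\}$, which is Zariski closed since its defining condition is algebraic in $(L,z)$. Because $\overline{A}\subset\mathbb{P}_n$ is compact, the first projection $\pi_1:\mathcal{I}\to\mathcal{G}_n$ is proper, so $\pi_1(\mathcal{I})=\mathcal{G}_n\setminus\tilde{\mathcal{G}}_n$ is closed and $\tilde{\mathcal{G}}_n$ is open.

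For density, the starting point is that $\overline{L}^\perp$ always lies in the hyperplane at infinity $H_\infty=\{z_0=0\}$: writing $\overline{L}=\mathbb{P}(\CC e_0\oplus L)$ in homogeneous coordinates, the orthogonal complement computes to $\overline{L}^\perp=\mathbb{P}(\{0\}\oplus L^\perp)\subset H_\infty$, and the constraint $e_n\in L$ further forces $\overline{L}^\perp\subset\mathbb{P}(e_n^\perp)\cap H_\infty$. Hence $\overline{L}^\perp\cap\overline{A}\subset A_\infty\cap\mathbb{P}(e_n^\perp)$, where $A_\infty=\overline{A}\cap H_\infty$ is a pure $(p-1)$-dimensional projective subset of $H_\infty\cong\mathbb{P}_{n-1}$. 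I would then argue that the hypotheses (irreducibility, $0\in A$, and $A\nsubseteq e_n^\perp$) force $z_n$ to be a non-constant regular function on $A$; consequently the rational function $z_n/z_0$ on $\overline{A}$ has a pole, and analyzing this pole yields $A_\infty\nsubseteq\mathbb{P}(e_n^\perp)$, so $\dim(A_\infty\cap\mathbb{P}(e_n^\perp))\leq p-2$ (interpreted as empty when $p=1$).

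Given that bound, a dimension count finishes density: for fixed $z\in A_\infty\cap\mathbb{P}(e_n^\perp)$ (so $z\perp e_n$), the $L\in\mathcal{G}_n$ with $z\in\overline{L}^\perp$ are exactly the $p$-dimensional subspaces of $z^\perp\subset\CC^n$ containing $e_n$, which form a Grassmannian of dimension $(p-1)(n-p-1)$. So the relevant incidence set has dimension at most $(p-2)+(p-1)(n-p-1)=(p-1)(n-p)-1=\dim\mathcal{G}_n-1$, and its image $\mathcal{G}_n\setminus\tilde{\mathcal{G}}_n$ is a proper closed subvariety of the irreducible variety $\mathcal{G}_n$, making $\tilde{\mathcal{G}}_n$ dense. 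For the $\tilde{\mathcal{U}}_n$ statement I would just note that the map $U(n-1)\to\mathcal{G}_n$, $l\mapsto\mathrm{span}\{l(e_1),\ldots,l(e_{p-1}),e_n\}$, is a continuous open surjection, so the preimage of the open dense $\tilde{\mathcal{G}}_n$ is open and dense in $U(n-1)$.

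The hardest step will be establishing $A_\infty\nsubseteq\mathbb{P}(e_n^\perp)$: the passage from ``$z_n$ non-constant on $A$'' to ``some point of $\overline{A}\cap H_\infty$ has nonzero $z_n$ homogeneous coordinate'' requires care, because a pole of $z_n/z_0$ on $\overline{A}$ is a statement about rational function values rather than directly about the homogeneous coordinate at the pole locus. I expect the right approach is to normalize $\overline{A}$ near a point at infinity and push the pole statement through, or alternatively to work with the highest-degree terms of polynomials in the defining ideal of $A$ to exhibit an asymptotic direction not annihilated by $z_n$.
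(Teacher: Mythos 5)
Your overall plan (pass to $\mathbb{P}_n$, set up an incidence correspondence, count dimensions) is the same kind of argument the paper uses, and the openness step via properness of $\pi_1:\mathcal{I}\to\mathcal{G}_n$ is fine. But the step you flagged as hardest, establishing $A_\infty\nsubseteq\mathbb{P}(e_n^\perp)$, is not just delicate: it is false under the stated hypotheses, and in fact the theorem itself fails without a further assumption. Take $n=2$, $p=1$, $A=V(z_2^2-z_1)\subset\CC^2$: irreducible, $0\in A$, $A\nsubseteq e_2^\perp$. Then $\overline{A}=V(z_2^2-z_0z_1)$ and $A_\infty=\overline{A}\cap\{z_0=0\}=\{[0,1,0]\}\subset\{z_2=0\}$. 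Here $\mathcal{G}_2$ is the single point $\{\CC e_2\}$, and $\overline{\CC e_2}^\perp=\{[0,1,0]\}\in\overline{A}$, so $\tilde{\mathcal{G}}_2=\emptyset$ is not dense. The same thing happens for any $n$ with $A=V(z_n^2-z_1)$: the top-degree form is $z_n^2$, so every asymptotic direction of $A$ lies in $\{z_n=0\}$, precisely the situation your highest-degree-term idea would need to exclude but cannot. Your dimension count, assuming $\dim(A_\infty\cap\mathbb{P}(e_n^\perp))\leq p-2$, is correct; what fails is that very hypothesis.

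You should also know that the paper's own proof runs into the same obstruction, phrased differently. It reduces to showing that a generic $(n-p)$-dimensional subspace $\tilde{L}$ of $L_n=\{z_n=0\}\subset\CC^{n+1}$ meets the cone $\tilde{A}_n=\tilde{A}\cap L_n$ (of dimension $\leq p$) only at the origin, and then counts dimensions in the Grassmannian $G(n-p,L_n)$. But the subspaces $\tilde{L}=\{0\}\oplus L^\perp$ arising from $L\in\mathcal{G}_n$ are not arbitrary in $G(n-p,L_n)$: they all lie in the additional hyperplane $\{z_0=0\}$, so they range only over the strictly smaller Grassmannian of subspaces of $\{z_0=z_n=0\}\cong\CC^{n-1}$. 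If $\tilde{A}_n$ has a component of full dimension $p$ inside $\{z_0=0\}$ (equivalently, if $A_\infty\subset\mathbb{P}(e_n^\perp)$), then every such $\tilde{L}$ hits $\tilde{A}_n$ nontrivially, as in the example. With an extra hypothesis ruling this out (for instance, that no irreducible component of $A_\infty=\overline{A}\cap\{z_0=0\}$ is contained in $\{z_n=0\}$), both your argument and the paper's go through; without it the statement is simply false.
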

	\begin{proof}
		The two statements are clearly equivalent. Let us prove the first statement. Consider the canonical projection. 
		\[
		\Pi: \CC^{n+1}\backslash\{0\}\to\mathbb{P}_n,\quad (z_0,z_1,\cdots,z_n)\mapsto [z_0,z_1,\cdots,z_n].
		\]
		For $L\in G(p,n)$, denote $\tilde{L}=\Pi^{-1}(\overline{L}^\perp)\cup\{0\}$ and $\tilde{A}=\Pi^{-1}(\overline{A})\cup\{0\}$. Then $\overline{L}^\perp\cap\overline{A}=\emptyset$ if and only if $\tilde{L}\cap\tilde{A}=\{0\}$. Since $A$ is irreducible and has dimension $p$, $\tilde{A}$ is a homogeneous irreducible algebraic set of dimension $p+1$. $\tilde{L}$ is a linear subspace of dimension $n-p$. The condition that $e_n\in L$ is equivalent to that $\tilde{L}\subset L_n:=e_n^\perp\subset\CC^{n+1}$. Let $\tilde{A}_n:=\tilde{A}\cap L_n$. Then $L\in\tilde{\mathcal{G}}_n$ if and only if $\tilde{L}\subset L_n$ and $\tilde{L}\cap\tilde{A}_n=\{0\}$.
		
		We claim that $\dim \tilde{A}_n\leq p$. Otherwise, $\dim \tilde{A}_n=p+1$. Since $\tilde{A}$ is irreducible, it cannot properly contain any algebraic set of the same dimension. So $\tilde{A}=\tilde{A}_n$ and therefore $\tilde{A}\subset L_n$. However, this implies that $A=\overline{A}\cap \cn\subset\Pi(\tilde{A}\backslash\{0\})\cap\cn\subset\Pi(L_n\backslash\{0\})\cap\cn=e_n^\perp$. A contradiction. Thus $\dim\tilde{A}\leq p$.
		
		Assume $\tilde{L}\subset L_n$. Both $\tilde{L}$ and $\tilde{A}_n$ are homogeneous varieties in $L_n\cong\CC^n$.  Thus $\tilde{L}\cap\tilde{A}_n=\{0\}$ if and only if their preimages in $\mathbb{P}_{n-1}$ do not intersect. The preimages of the two varieties have dimension $n-p-1$, $\leq p-1$, respectively. Thus the set of $\tilde{L}\subset L_n$ not intersecting $\tilde{A}$ form a dense open sent in $G(n-p,n)$. From this it is easy to see that $\tilde{\mathcal{G}}_n$ is a dense open set in $\mathcal{G}_n$. This completes our proof.

	\end{proof}

\end{document}